\def\bint{{\ifinner\rlap{\bf\kern.30em--}
\int\else\rlap{\bf\kern.35em--}\int\fi}\ignorespaces}
\def\sbint{{\ifinner\rlap{\bf\kern.32em--}
\hspace{0.078cm}\int\else\rlap{\bf\kern.45em--}\int\fi}\ignorespaces}
\newtheorem{theorem}{Theorem}[section]
\newtheorem{lemma}[theorem]{Lemma}
\newtheorem{corollary}[theorem]{Corollary}
\newtheorem{proposition}[theorem]{Proposition}
\theoremstyle{definition}
\newtheorem{remark}[theorem]{Remark}
\newtheorem{definition}[theorem]{Definition}
\numberwithin{equation}{section}
\begin{document}

\title{\bf\Large Wavelet Characterization of 
Inhomogeneous Lipschitz Spaces on 
Spaces of Homogeneous Type and Its Applications
\footnotetext{\hspace{-0.35cm} 2020 {\it Mathematics
Subject Classification}. Primary 46E36;
Secondary 46E35, 46E39, 42B25, 30L99.\endgraf
{\it Key words and phrases.} space of homogeneous type, 
inhomogeneous Lipschitz space, wavelet, Ahlfors regular condition.\endgraf
This project is partially supported by the
National Natural Science Foundation of China (Grant Nos. 12401115), 
the Open Project Program of Key
Laboratory of Mathematics and Complex System
of Beijing Normal University (Grant No. K202304), 
the Hebei Natural Science Foundation (Grant No. A2024201004), 
and the Excellent Youth Research Innovation 
Team of Hebei University (QNTD202414).}}
\author{Fan Wang\footnote{Corresponding author,
E-mail: \texttt{fanwang@hbu.edu.cn}/{\color {red} 
April 21, 2025}/newest version.} }
\date{}
\maketitle

\vspace{-0.8cm}

\begin{center}
\begin{minipage}{13cm}
{\small {\bf Abstract}\quad
In this article,
the author establishes  a wavelet 
characterization of inhomogeneous Lipschitz 
space $\mathrm{lip}_{\theta}(\mathcal{X})$ via 
Carlson sequence, where $\mathcal{X}$ is a space of homogeneous
type introduced by R. R. Coifman and G. Weiss. As applications, characterizations  
of several geometric conditions on $\mathcal{X}$, 
involving the upper bound, the lower bound, and the Ahlfors regular 
condition, are obtained.}
\end{minipage}
\end{center}

\vspace{0.2cm}



\section{Introduction}\label{intro}

As a very fundamental function space, 
(inhomogeneous) Lipschitz spaces 
permeate both pure and applied disciplines. 
Their significance extends ubiquitously across diverse mathematical 
domains, such as ordinary and partial 
differential equations, measure-theoretic analysis, and nonlinear 
functional analysis, as well as geometric-topological contexts 
including metric geometry, fractal theory, and topological dynamics. 
Beyond theoretical mathematics, these functions demonstrate remarkable 
versatility in computational science, finding essential applications 
in image processing algorithms, search engine optimization architectures, 
and stability analysis of machine learning models.

During the 1970s, Coifman and Weiss \cite{cw71,cw77} introduced 
the groundbreaking concept of spaces 
$(\mathcal{X},d,\mu)$ of homogeneous 
type (see
Definition \ref{d-shy} below), a framework extending classical Euclidean analysis to general metric measure spaces. 
This innovation catalyzed profound investigations into Lipschitz spaces over such structures.
In \cite{ms79}, Mac\'{i}as and Segovia made  contributions by elucidating the geometric structure of $\mathcal{X}$ and unifying several definitions of Lipschitz functions on these spaces. In 2020, Zheng et al. \cite{zlt} established the Littlewood--Paley 
characterization of Lipschitz spaces on Ahlfors regular spaces. Later, Li and Zheng \cite{lz} obtained the 
boundedness of Caldr\'on--Zygmund operators on Lipschitz spaces.  
Motivated by the advent of wavelets system constructed in \cite{ah13}, 
Liu et al. \cite{lyy} developped an wavelets characterization of homogeneous Lipschitz spaces.  
On the other hand, 
He et al.\ \cite[Definition 2.7]{hlyy} introduced a new kind 
of \emph{approximations of the
identity with exponential decay}, a pivotal tool to establish 
(in)homogeneous continuous/discrete Calder\'on reproducing
formulae on $\mathcal{X}$. 
Building on this foundation, He et al.  \cite{hhllyy}
obtained a complete real-variable theory of atomic 
Hardy spaces on $\mathcal{X}$. 
Recently, based on the concept of inhomogeneous approximation
of the identity with exponential decay, He et al. \cite{hyy19} established 
several characterizations of local Hardy space $h^p(\mathcal{X})$ and 
showed that the dual of $h^p(\mathcal{X})$ is the inhomogeneous 
Lipschitz space $\mathrm{lip}_{1/p-1}(\mathcal{X})$. 
We refer the reader to \cite{bdl18,bdl20,lj10,wyy,yhyy} for more recent progress on the topic 
of (local) Hardy spaces and their duals on spaces of homogeneous type.

Theoretically, an important significance of Lipschitz spaces lies 
in their role as the dual of Hardy-type spaces. 
Thus, the products of functions in Hardy spaces and 
Lipschitz spaces have also garnered significant research interest.
Inspired by the progress on geometric function theory (see, 
for instance, \cite{aikm}) and the nonlinear elasticity (see, 
for instance, \cite{b76,m88}), Bonami et al. \cite{bijz} pioneered the 
investigation into bilinear decompositions involving products of functions in Hardy spaces and 
Lipschitz spaces. Subsequent developments by Bonami and Feuto \cite{bf10,f09} established the linear 
decomposition of product of functions in $H^p(\mathbb{R}^n)$
and its dual space. Concurrently, Li and Peng \cite{lp09} obtained a  
linear decomposition of product of functions in $H_L^1(\mathbb{R}^n)$
and its dual space ${\rm BMO}_L(\mathbb{R}^n)$, 
where $L:=-\Delta+V$ is a Schr\"odinger operator; see also Ky 
\cite{ky14} for a bilinear version. In the context of local Hardy space 
$h^p(\mathbb{R}^n)$, Cao et al. \cite{cky18} 
established a bilinear decomposition 
of products for functions in $h^p(\mathbb{R}^n)$ and its dual spaces with $p\in(0,1]$, 
which was further refined by Yang et al. \cite{yyz21}. 
In \cite{yyz21}, Yang et al. obtained alternative bilinear 
decomposition of products for functions in 
$h^p(\mathbb{R}^n)$ and its dual spaces with $p\in(0,1)$, which was 
shown to be sharp in the dual sense. Moreover, using this bilinear 
decomposition, Yang et al. \cite{yyz21} obtained some div-curl estimates. 
These results of bilinear decomposition also 
play key roles in the estimates of weak Jacobians 
(see, for instance, \cite{bfg,bgk}) and commutators 
(see, for instance, 
\cite{ky13,lcfy17}). These works further inspire many new ideas
in the research of nonlinear partial differential equations; 
see, for instance, \cite{bijz,is01,lr02} and their references therein
 for more details.
Recent advances in (bi)linear decomposition theory for (local) Hardy space products and their duals continue to emerge,
as documented in \cite{bcklyy,bjxyz,fyl,lcfy18,lyz24}, 
highlighting the enduring vitality of this research direction.

Notice that wavelet characterization for (inhomogeneous) Lipschitz 
spaces plays a key role when analyzing products of functions 
in (local) Hardy spaces and their duals. 
This naturally raises an question: Can inhomogeneous Lipschitz 
spaces on general spaces of homogeneous 
type admit analogous wavelet characterizations? 
The main target of this paper is to give an affirmative answer. Precisely, we  
develop a wavelet characterization of the inhomogeneous Lipschitz space $\mathrm{lip}_{\theta}(\mathcal{X})$ via 
Carlson sequence. 
\begin{theorem}\label{thm-lip-cs}
Let $\omega$ be as in \eqref{eq-doub}, $\eta \in (0,1]$ 
be as in Lemma \ref{l-wave1},  
and $\theta\in (0,\eta/\omega)$. Then, 
for any $f\in L^2_{\mathcal{B}}(\mathcal{X})$, 
the following statements are equivalent:
\begin{enumerate}
\item[{\rm(i)}] $f\in\mathrm{lip}_\theta(\mathcal{X})$;
\item[{\rm(ii)}] $$
f=\sum_{\alpha\in\mathcal{A}_0} 
\left\langle f, \phi_\alpha^0\right\rangle
\phi_\alpha^0+\sum_{k=0}^\infty\sum_{\beta\in\mathcal{G}_k} 
\left\langle f, 
\psi_\beta^{k+1}\right\rangle\psi_\beta^{k+1}
$$
in $L_{\mathcal{B}}^2(\mathcal{X})$ and 
\begin{align*}
\|f\|_\ast=&{}\sup_{Q\in \mathcal{D}_0}
\left\{\frac{1}{[\mu(Q)]^{1+2\theta}}\left[
\sum_{\{\alpha\in\mathcal{A}_0:Q_\alpha^0\subset Q\}}\left|
\left\langle f, \phi_\alpha^0\right\rangle\right|^2\right.\right.\\
&\quad\left.\left.+
\sum_{k=0}^\infty\sum_{\{\beta\in\mathcal{G}_k:
Q_\beta^{k+1}\subset Q\}}\left|\left\langle f, 
\psi_\beta^{k+1}\right\rangle\right|^2\right]\right\}^{\frac{1}{2}}
<\infty. 
\end{align*}
\end{enumerate}
Moreover, there exists a constant $C\in[1,\infty)$, such that 
$$
C^{-1}\|f\|_\ast\leq \|f\|_{\mathrm{lip}_\theta(\mathcal{X})}
\leq C\|f\|_\ast.
$$
\end{theorem}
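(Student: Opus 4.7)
The plan is to establish the two implications separately, with the quantitative norm equivalence then falling out by tracking constants through both arguments. The $L^2_{\mathcal{B}}(\mathcal{X})$-convergence of the wavelet expansion in (ii) is not really where the content of the theorem lies: it follows from the wavelet basis/frame property recorded in Lemma~\ref{l-wave1} together with the assumed membership $f\in L^2_{\mathcal{B}}(\mathcal{X})$, and the same series appears in both directions.

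For the direction (i)$\Rightarrow$(ii), I would fix $Q\in\mathcal{D}_0$ and bound the Carleson-type sum over wavelet coefficients indexed by subcubes of $Q$. The crucial tool is the cancellation of $\psi_\beta^{k+1}$, which allows one to replace $f$ by $f-c$ inside each pairing $\langle f,\psi_\beta^{k+1}\rangle$; taking $c$ to be the mean of $f$ over a fixed dilate of $Q$, Bessel's inequality for the wavelet system reduces the Carleson sum to $\|(f-c)\mathbf{1}_{CQ}\|_{L^2(\mathcal{X})}^2$ plus tail terms coming from wavelets that partially leave $CQ$. The main term is then controlled by $\mu(Q)^{1+2\theta}\|f\|_{\mathrm{lip}_\theta(\mathcal{X})}^2$ using the definition of $\mathrm{lip}_\theta(\mathcal{X})$, and the tails are dispatched by the wavelet size/regularity estimates from Lemma~\ref{l-wave1} followed by a geometric sum in $k$. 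The scaling coefficients $\langle f,\phi_\alpha^0\rangle$ indexed by $\mathcal{A}_0$ carry no cancellation but are controlled directly by the inhomogeneous scale-0 piece of $\|f\|_{\mathrm{lip}_\theta(\mathcal{X})}$.

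For the converse (ii)$\Rightarrow$(i), I would fix a ball $B$ at which to estimate the Lipschitz seminorm and split the wavelet expansion of $f$ into a \emph{near} part, consisting of wavelets whose reference cubes lie inside a fixed dilate $CB$, and a \emph{far} part. The near part contributes to the $L^2$-oscillation of $f$ on $B$ via a Plancherel-type identity on the wavelet system, and the Carleson hypothesis (ii) immediately produces the required bound of order $\mu(B)^{\theta+1/2}$. The far part, after subtracting an appropriate constant (for instance its mean on $B$), is rewritten using the H\"older regularity of the wavelets with exponent $\eta$; summing these pointwise differences against the upper-dimension growth governed by $\omega$ yields a geometric series in $k$ which converges precisely because $\theta<\eta/\omega$.

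I expect the far-part estimate in (ii)$\Rightarrow$(i) to be the principal obstacle. One must reassemble an abstract Carleson sequence of wavelet coefficients into a pointwise Lipschitz bound on a fixed ball without losing the factor $\mu(B)^\theta$, and on a general space of homogeneous type there is no translation or dilation structure to simplify the accounting. The heart of the difficulty is to balance the wavelet regularity exponent $\eta$ against the doubling exponent $\omega$ uniformly in $B$, while handling the boundary wavelets that straddle the dilate $CB$. Tracking the hypothesis $\theta\in(0,\eta/\omega)$ through this summation is what makes the argument sharp, and the bookkeeping here is the most delicate part of the proof.
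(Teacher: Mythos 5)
Your two quantitative estimates are sketched along essentially the same lines as the paper: the direction (i)$\Rightarrow$(ii) is the paper's Proposition \ref{c-lip} (subtract a mean, use orthonormality/Bessel on a fixed dilate of $Q$, and kill the exterior tail with the exponential decay of the wavelets, Lemma \ref{lip-wave}), and the direction (ii)$\Rightarrow$(i) is the paper's Proposition \ref{lip-c} (near/far splitting, with the coarse scales $\delta^k>r_B$ handled by the H\"older regularity of exponent $\eta$ and a geometric series converging exactly because $\theta<\eta/\omega$). Two points in your write-up are genuine gaps rather than omitted routine detail.

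First, and most seriously, you dismiss the reproducing formula
$f=\sum_{\alpha}\langle f,\phi_\alpha^0\rangle\phi_\alpha^0+\sum_{k,\beta}\langle f,\psi_\beta^{k+1}\rangle\psi_\beta^{k+1}$
in $L^2_{\mathcal B}(\mathcal X)$ as a consequence of ``the wavelet basis property together with $f\in L^2_{\mathcal B}(\mathcal X)$.'' The wavelets form an orthonormal basis of $L^2(\mathcal X)$, not of $L^2_{\mathcal B}(\mathcal X)$; a function in $\mathrm{lip}_\theta(\mathcal X)$ is only locally square-integrable and grows at infinity like $[V(x_0,x)]^\theta$ (Lemma \ref{l-point}), so no $L^2$ expansion theorem applies to it. The coefficients are only defined because of the exponential decay of the wavelets (Proposition \ref{p-lipd}), and the fact that the series actually reconstructs $f$ is a theorem, not a formality: the paper proves it in Proposition \ref{t-lip-wave} by first showing the series converges in $L^2_{\mathcal B}(\mathcal X)$ to some $\widetilde f\in\mathrm{lip}_\theta(\mathcal X)$, then verifying via a careful Fubini/absolute-convergence argument that $f-\widetilde f$ has all wavelet coefficients zero, and finally invoking an injectivity result (Lemma \ref{lip-lip}) whose proof is itself a full page of truncation and exponential-decay estimates. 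Without some substitute for this chain, your statement (ii) is not established for a given $f\in\mathrm{lip}_\theta(\mathcal X)$, and in the converse direction you have no identification of the sum of the series with a concrete function on which to run the Lipschitz estimate.

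Second, in (ii)$\Rightarrow$(i) your output is an $L^2$-oscillation bound $\|f-c_B\|_{L^2(B)}\lesssim\|f\|_\ast[\mu(B)]^{\theta+1/2}$ (and $\|f\|_{L^2(B)}\lesssim\|f\|_\ast[\mu(B)]^{\theta+1/2}$ for large balls), whereas $\mathrm{lip}_\theta(\mathcal X)$ is defined by a pointwise supremum of $|f(x)-f(y)|/[\mu(B)]^\theta$ over $x,y\in B$ and an $L^\infty$ bound for large balls. Passing from the $L^2$-Campanato-type estimate to the pointwise definition is a John--Nirenberg/self-improvement step; the paper imports it from \cite[Corollary 7.5]{hyy19}. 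You should either cite such a result or prove it, since it does not follow from the wavelet estimates alone.
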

This 
result crucially eliminates dependencies on  the reverse doubling
condition of the measure and the metric condition of the quasi-metric under consideration. Moreover, using this  
characterization, we discuss several geometric conditions on $\mathcal{X}$, involving the upper bound, the lower bound, and the Ahlfors regular 
condition, and obtain some equivalence characterizations.  

The organization of the remainder of this article is as follows.

In Section \ref{s-gc},  we first recall some basic preliminaries 
on spaces of homogeneous type, inhomogeneous 
Lipschitz spaces, dyadic cube system established in \cite{hk}, 
spaces of test functions, and spaces of distributions. 
We show that all test functions are 
pointwise multipliers on the inhomogeneous Lipschitz spaces; 
see Proposition \ref{p-pm} below. 

Section \ref{proof} is devoted to proving Theorem \ref{thm-lip-cs}. 
To this end, we first recall 
the wavelets system obtained in \cite{ah13}. 
Using theses wavelets, we establish an 
equivalence characterization of  imhomogenous Lipschitz spaces  
via  Carleson sequences. 
Differently from the proof 
of \cite[Theorem 2.8]{cky18} on Euclidean spaces $\mathbb{R}^n$, 
the relation 
$\mathrm{lip}_\theta(\mathcal{X})=
F_{\infty,\infty}^{\omega\theta}(\mathcal{X})
=C^{\omega\theta}(\mathcal{X})$
may not holds true in the general setting of spaces of 
homogeneous type; see Corollary \ref{thm-ags} below. 
To overcome this, we fully use the exponential decay of 
the wavelets to obtain that the integrals of 
the products of functions in  $\mathrm{lip}_\theta(\mathcal{X})$ and  
wavelets also have enough decay; see Proposition \ref{lip-wave}
below. 

In Section \ref{app}, we give some applications. 
As corollaries of Theorem \ref{thm-lip-cs}, we develop three equivalent 
characterizations of geometric conditions on 
$\mathcal{X}$, involving the upper bound, the lower bound, 
and the Ahlfors regular 
condition. Corollary \ref{thm-ags} extends  
results in \cite[Theorem 3.2]{wyy} into 
the inhomogeneous version. 

Finally, let us make some conventions on notation.
Throughout this article, $A_0$ is used to denote
the positive constant appearing in \eqref{tri-in}, $\omega$ is used to
to denote the \emph{upper dimension} in  \eqref{eq-doub},
and $\eta$ is used to denote
the smoothness index of wavelets in Lemma \ref{l-wave1}.
Moreover, $\delta$ is a small positive number
coming from the construction of the
dyadic cubes on $\mathcal{X}$ (see Lemma \ref{2-cube} below).
We use $C$ to denote a positive constant which is
independent of the main parameters involved, but may vary
from line to line.  The symbol $C_{(\alpha,\beta,\dots)}$
 denotes a positive constant depending on the indicated
parameters $\alpha,\ \beta,\ \dots$.
The symbol $A\lesssim B$ means that $A\leq CB$ for some
positive constant $C$, while $A\sim B$ means $A\lesssim B\lesssim A$.
If $f\le Cg$ and $g=h$
or $g\le h$, we then write $f\lesssim g=h$ or $f\lesssim g\le h$.
For any set $E\subset \mathcal{X}$, 
$\mathbf{1}_E$ means the characteristic function of $E$. 
For any set $F$,
$\#F$ denotes its \emph{cardinality}.

\section{Inhomogeneous Lipschitz Spaces on 
Spaces of Homogeneous Type}\label{s-gc}

In this section, we mainly investigate the inhomogeneous Lipschitz 
spaces $\mathrm{lip}_\theta(\mathcal{X})$ on spaces of 
homogeneous type, including  the relation between 
$\mathrm{lip}_\theta(\mathcal{X})$ and distributions spaces. Moreover, 
we establish an equivalent characterization 
of $\mathrm{lip}_\theta(\mathcal{X})$ via  Carleson sequences. 
Let us first recall the concept of space of homogeneous type in the sense of 
Coifman and Weiss \cite{cw71,cw77}.

\begin{definition}\label{d-shy}
Let $\mathcal{X}$ be a non-empty set, $d$ a non-negative function
defined on $\mathcal{X}\times\mathcal{X}$, and $\mu$ a measure 
on $\mathcal{X}$. $(\mathcal{X},d,\mu)$ is called
a \emph{space of homogeneous type} provided that $d$ and $\mu$
satisfy the following conditions:
\begin{enumerate}
\item[{\rm (I)}]for any $x, y, z \in \mathcal{X}$,
\begin{enumerate}
\item[{\rm(i)}] $d(x,y)=0$ if and only if $x=y$;
\item[{\rm(ii)}] $d(x,y)=d(y,x)$;
\item[{\rm(iii)}] there exists a constant $A_0 \in [1, \infty)$,
independent of $x$, $y$, and $z$, such that
\begin{equation}\label{tri-in}
d(x,z)\leq A_0[d(x,y)+d(y,z)];
\end{equation}
\end{enumerate}
\item[{\rm (II)}] there exists a constant $C\in[1,\infty)$ such that,
for any ball $B \subset \mathcal{X}$,
\begin{equation}\label{eq-doub2}
\mu(2B)\leq C\mu(B),
\end{equation}
where, the \emph{ball} $B$, centered at $x_B \in \mathcal{X}$
with radius $r_B\in(0, \infty)$, of $\mathcal{X}$ is defined by setting
$$
B:=B(x_B,r_B):=\left\{x\in\mathcal{X}:\ d(x_B,x)<r_B\right\}
$$
and, for any $\tau\in(0,\infty)$, $\tau B:=B(x_B,\tau r_B)$.
\end{enumerate}
\end{definition}

In what follows, we always
assume that $\mu(\mathcal{X})=\infty$. 
Note that  $\rm{diam} \mathcal{X}=\infty$ implies
$\mu(\mathcal{X})=\infty$ (see, for instance,
\cite[Lemma 5.1]{ny97} and \cite[Lemma 8.1]{ah13}).
Therefore, under
the assumptions of this article, $\mu(\mathcal{X})=\infty$ if
and only if $\rm{diam} \mathcal{X}=\infty$.
For any $x\in \mathcal{X}$, we also assume that the 
balls $\{B(x,r)\}_{r\in(0,\infty)}$ form a basis of open neighborhoods
of $x$. Moreover, we also assume that $\mu$ is Borel regular, 
that is, all open sets are measurable and every set
$A\subset \mathcal{X}$ is contained in a Borel set $E$ satisfying that $\mu(A)=\mu(E)$.
For any $x\in \mathcal{X}$ and $r\in(0,\infty)$, we suppose 
that $\mu(B(x,r))\in (0,\infty)$ and $\mu(\{x\})=0$.
Let 
$$C_{(\mu)}:=\sup_{\mathrm{ball}\,B\subset \mathcal{X}}\mu(2B)/\mu(B).$$
Then it is easy to prove that  $C_{(\mu)}$ is the smallest
positive constant satisfying \eqref{eq-doub2}. Moreover,
 \eqref{eq-doub2} further implies that,
for any ball $B$ and any $\lambda\in[1,\infty)$,
\begin{equation}\label{eq-doub}
\mu(\lambda B)\leq C_{(\mu)}\lambda^\omega\mu(B),
\end{equation}
where $\omega:= \log_2 C_{(\mu)}$ is called
the \emph{upper dimension} of $\mathcal{X}$.

The following lemma includes  some useful estimates
related to the measure of balls;
see, for instance, \cite[Lemma 2.1]{hmy08} for more 
details (see also \cite[Lemma 2.4]{hlyy}).
For any $r\in(0,\infty)$ and $x,y\in\mathcal{X}$ with $x\neq y$,
let 
$$V(x,y):=\mu(B(x,d(x,y)))\ \mathrm{and}\ V_r(x):=\mu(B(x,r)).$$

\begin{lemma}\label{l-ball}
\begin{enumerate}
\item[{\rm (i)}]Let $x,y\in \mathcal{X}$ with $x\neq y$ and $r \in (0, \infty)$.
Then $V(x, y)\sim V (y, x)$ and
\begin{align*}
V_r(x) + V_r(y) + V (x,y) &\sim V_r(x) + V (x,y) \sim V_r(y) + V (x,y)\\
&\sim \mu(B(x,r + d(x,y))).
\end{align*}
Moreover, if $d(x,y)\leq r$, then $V_r(x)\sim V_r(y)$.
Here all the above positive equivalence constants are independent
of $x$, $y$, and $r$.
\item[{\rm(ii)}] Let $\gamma\in(0,\infty)$. 
There exists a positive constant $C$ such that,
for any $x_1 \in \mathcal{X}$ and $r \in (0, \infty)$,
$$
\int_{\mathcal{X}} \frac{1}{V_r(x_1)+V(x_1,y)}\left[\frac{r}{r+d(x_1,
y)}\right]^\gamma\,d\mu(y)\leq C.
$$
\item[{\rm(iii)}] There exists a positive constant $C$ such that, 
for any $x \in \mathcal{X} $ and $R \in (0, \infty)$,
$$\quad \int_{\{z\in \mathcal{X}:\ d(x,z)\geq R\}}\frac{1}{V(x,y)}
\left[\frac{R}{d(x,y)}\right]^\beta\,d\mu(y)\leq C.$$
\end{enumerate}
\end{lemma}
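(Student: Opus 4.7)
The plan is to derive all three parts of Lemma \ref{l-ball} from the doubling condition \eqref{eq-doub} together with the quasi-triangle inequality \eqref{tri-in} by standard covering and dyadic decomposition arguments. Since this lemma is of the type routinely recalled in the $\mathcal{X}$-analysis literature, I would not expect a genuinely novel input to be required; the task is purely one of organizing the ball containments and geometric series carefully.

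For part (i), I would first establish $V(x,y)\sim V(y,x)$ via the inclusion $B(x,d(x,y))\subset B(y,A_0(1+A_0)d(x,y))$ obtained from \eqref{tri-in}, followed by one application of \eqref{eq-doub}; swapping the roles of $x$ and $y$ gives the reverse direction. The chain of equivalences
\[
V_r(x)+V_r(y)+V(x,y)\sim V_r(x)+V(x,y)\sim V_r(y)+V(x,y)\sim \mu(B(x,r+d(x,y)))
\]
is then handled by splitting into the cases $d(x,y)\le r$ (where $B(x,r)\subset B(y,A_0(r+d(x,y)))\subset B(y,2A_0 r)$ yields $V_r(x)\sim V_r(y)$ by doubling, and $r+d(x,y)\sim r$) and $d(x,y)>r$ (where the inclusion $B(x,r)\subset B(x,d(x,y))$ shows that $V(x,y)$ dominates both $V_r(x)$ and $V_r(y)$, and $r+d(x,y)\sim d(x,y)$). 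In both cases the equivalence with $\mu(B(x,r+d(x,y)))$ follows by sandwiching this ball between $B(x,\max\{r,d(x,y)\})$ and a bounded dilate thereof and invoking \eqref{eq-doub}.

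For parts (ii) and (iii), I would decompose the integration domain into dyadic annuli. For (ii), set $E_0:=B(x_1,r)$ and $E_k:=B(x_1,2^kr)\setminus B(x_1,2^{k-1}r)$ for $k\in\mathbb{N}$; on $E_k$ one has $r+d(x_1,y)\sim 2^k r$ and, by part (i), $V_r(x_1)+V(x_1,y)\sim \mu(B(x_1,2^kr))$, so the integrand is bounded pointwise by $C\,2^{-k\gamma}/\mu(B(x_1,2^kr))$, making the integral over $E_k$ at most $C\,2^{-k\gamma}$. Summing the geometric series, which converges because $\gamma>0$, yields the claimed uniform bound. Part (iii) is entirely analogous: on the annuli $\{y:2^kR\le d(x,y)<2^{k+1}R\}$ for $k\ge 0$, part (i) gives $V(x,y)\sim \mu(B(x,2^kR))$, and the factor $(R/d(x,y))^\beta$ supplies a summable geometric decay $2^{-k\beta}$.

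I do not expect any real obstacle here; the only mild bookkeeping point is tracking how the quasi-triangle constant $A_0$ interacts with the doubling constant $C_{(\mu)}$ when fixing the dilation factors in the ball inclusions of part (i), especially across the case distinction $d(x,y)\lessgtr r$. All resulting constants depend only on $A_0$, $C_{(\mu)}$, $\omega$, and, in (ii)--(iii), on the respective decay parameters $\gamma$ or $\beta$, confirming this as a routine preliminary.
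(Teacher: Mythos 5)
Your argument is correct and is the standard one: the paper itself gives no proof of this lemma, recalling it instead from \cite[Lemma 2.1]{hmy08} and \cite[Lemma 2.4]{hlyy}, and your case split $d(x,y)\lessgtr r$ for (i) together with the dyadic-annulus decomposition for (ii) and (iii) is exactly the argument used in those references. The only glossed-over detail is that in the case $d(x,y)>r$ one bounds $V_r(y)\leq V(y,x)\sim V(x,y)$ (rather than via an inclusion into $B(x,d(x,y))$), but this is immediate and does not affect the proof.
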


On the ratio of measures of two balls, we have the following lemma. 

\begin{lemma}\label{l-ball2}
Let $x,y\in\mathcal{X}$ and $r_1,r_2\in(0,\infty)$. If $r_1+d(x,y)\geq r_2$, 
then 
$$
\frac{\mu(B(x,r_1))}{\mu(B(y,r_2))}\leq A_0^\omega
\left[\frac{r_1+d(x,y)}{r_2}\right]^\omega.
$$
\end{lemma}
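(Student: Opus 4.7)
The plan is a direct two-step estimate: enclose $B(x,r_1)$ in a ball centered at $y$ via the quasi-triangle inequality, then apply the scaling doubling property \eqref{eq-doub}.

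First, for any $z\in B(x,r_1)$, the quasi-triangle inequality \eqref{tri-in} gives $d(y,z)\le A_0[d(y,x)+d(x,z)]<A_0[d(x,y)+r_1]$, which yields the inclusion $B(x,r_1)\subset B(y,A_0(r_1+d(x,y)))$. Monotonicity of $\mu$ then gives
$$
\mu(B(x,r_1))\le \mu(B(y,A_0(r_1+d(x,y)))).
$$

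Next, I would set $\lambda:=A_0(r_1+d(x,y))/r_2$. The hypothesis $r_1+d(x,y)\ge r_2$ combined with $A_0\ge 1$ ensures $\lambda\ge 1$, so the scaling doubling inequality \eqref{eq-doub} may be applied to the ball $B(y,r_2)$ with dilation factor $\lambda$, bounding $\mu(B(y,\lambda r_2))=\mu(B(y,A_0(r_1+d(x,y))))$ by a multiple of $\lambda^\omega \mu(B(y,r_2))=A_0^\omega[(r_1+d(x,y))/r_2]^\omega\mu(B(y,r_2))$. Chaining with Step 1 and dividing through by $\mu(B(y,r_2))$ produces the claimed ratio bound.

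This is a routine two-line computation with no genuine obstacle; the only place requiring a moment of care is verifying $\lambda\ge 1$ so that the scaling form of the doubling property is applicable, and that is precisely the reason the hypothesis $r_1+d(x,y)\ge r_2$ is imposed (without it, one would only be able to trivially bound the dilation factor by $1$ and lose the quantitative $\omega$-power growth on the right-hand side).
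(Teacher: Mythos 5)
Your proof is correct and follows exactly the same route as the paper: enclose $B(x,r_1)$ in $B(y,A_0(r_1+d(x,y)))$ via the quasi-triangle inequality, then apply the scaling doubling property \eqref{eq-doub} with dilation factor $A_0(r_1+d(x,y))/r_2\ge 1$. (Both your argument and the paper's own proof in fact pick up the extra constant $C_{(\mu)}$ from \eqref{eq-doub}, which the stated bound $A_0^\omega[\cdots]^\omega$ silently absorbs; this is a harmless constant discrepancy, not a gap.)
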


\begin{proof}
Let $x,y\in\mathcal{X}$ and $r_1,r_2\in(0,\infty)$.  By \eqref{tri-in}, we find that, for 
any $z\in B(x,r_1)$, 
$$
d(z,y)\leq A_0[d(z,x)+d(x,y)]\leq A_0[r_1+d(x,y)],
$$
which further implies that 
$$B(x,r_1)\subset B(y,A_0[r_1+d(x,y)])
=B\left(y,A_0\frac{r_1+d(x,y)}{r_2}r_2\right).$$
This, together with \eqref{eq-doub}, further implies that 
$$
\mu(B(x,r_1))\leq A_0^\omega
\left[\frac{r_1+d(x,y)}{r_2}\right]^\omega\mu(B(y,r_2)),
$$
which completes the proof of Lemma \ref{l-ball2}.
\end{proof}

Now, we recall the concept of inhomogeneous  Lipschitz spaces on spaces
of homogeneous type. For any $q\in[1,\infty]$, the set 
$L^q_{\mathcal{B}}(\mathcal{X})$ denotes the collection of all
measurable functions $f$ on $\mathcal{X}$ such that 
$f\mathbf{1}_B\in L^q(\mathcal{X})$ for any ball 
$B\subset \mathcal{X}$. For any 
$\{f_n\}_{n\in\mathbb{N}}\subset L^q_{\mathcal{B}}(\mathcal{X})$ 
and $f\in L^q_{\mathcal{B}}(\mathcal{X})$, 
if, for any $B\subset \mathcal{X}$, 
$$
\lim_{n\to\infty} \|f_n-f\|_{L^q(B)}=0,
$$
then we say that $\{f_n\}_{n\in\mathbb{N}}$ converges to $f$ in 
$L^q_{\mathcal{B}}(\mathcal{X})$.

\begin{definition}\label{d-lip}
For any $B:=B(x_B,r_B)\subset \mathcal{X}$, $\theta\in(0,1)$, 
and $f\in L^q_{\mathcal{B}}(\mathcal{X})$, let
$$
\mathcal{M}_\theta^B(f):=\begin{cases}
\displaystyle{\sup_{x,y\in B} \frac{|f(x)-f(y)|}{[
\mu(B)]^\theta}}& \textup{if } r_B\in(0,1],\\
{}&{}\\
\displaystyle{\frac{\|f\|_{L^\infty(B)}}{[
\mu(B)]^\theta}} & \textup{if } r_B\in(1,\infty).
\end{cases}
$$
The \emph{inhomogeneous Lipschitz spaces} $\mathrm{lip}_\theta(
\mathcal{X})$ is 
defined by setting 
$$
\mathrm{lip}_\theta(\mathcal{X}):=\left\{f\in
L^q_{\mathcal{B}}(\mathcal{X})
: \|f\|_{\mathrm{lip}_\theta(\mathcal{X})}:=\sup_{\text{ball}\ 
B\subset \mathcal{X}}
\mathcal{M}_\theta^B(f)<\infty\right\}.
$$
\end{definition}

By Definition \ref{d-lip}, we find that, 
for any $f\in \mathrm{lip}_\theta(\mathcal{X})$, 
$f$ is continuous. Moreover, we have the following properties of   
$\mathrm{lip}_\theta(\mathcal{X})$.

\begin{lemma}\label{l-point}
Let $\theta\in(0,1)$. Then there exists a 
positive constant $C$ such that, 
for any $x\in\mathcal{X}$, $r\in(1,\infty)$, and 
$f\in \mathrm{lip}_\theta(\mathcal{X})$,
\begin{equation}\label{e-p}
|f(x)|\leq C\|f\|_{\mathrm{lip}_\theta(\mathcal{X})}[V_r(x)]^\theta,
\end{equation}
and, for any $x,y\in\mathcal{X}$ with $x\neq y$, 
\begin{equation}\label{e-p2}
|f(x)-f(y)|\leq C\|f\|_{\mathrm{lip}_\theta(\mathcal{X})}[V(x,y)]^\theta.
\end{equation}
\end{lemma}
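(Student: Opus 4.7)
The plan is to realize each of the two inequalities as a direct consequence of testing the Lipschitz seminorm on a single, carefully chosen ball and then converting the measure of that ball into $V_r(x)$ or $V(x,y)$ via the doubling property \eqref{eq-doub2}. No tool beyond Definition \ref{d-lip}, \eqref{eq-doub2}, and Lemma \ref{l-ball}(i) will be needed.

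First I would prove \eqref{e-p}. Given $x\in\mathcal{X}$ and $r\in(1,\infty)$, take $B:=B(x,r)$; since $r_B=r>1$, the $L^\infty$-branch of Definition \ref{d-lip} yields
$$\|f\|_{L^\infty(B)}\le[\mu(B)]^\theta\|f\|_{\mathrm{lip}_\theta(\mathcal{X})}=[V_r(x)]^\theta\|f\|_{\mathrm{lip}_\theta(\mathcal{X})}.$$
Combined with the continuity of $f$, which has already been observed just above this lemma, and with $x\in B$, this gives $|f(x)|\le\|f\|_{L^\infty(B)}$ and hence \eqref{e-p}.

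For \eqref{e-p2} I would use the ball $B:=B(x,2d(x,y))$, which certainly contains both $x$ and $y$, and split according to whether its radius is at most $1$ or larger. If $2d(x,y)\le 1$, then the oscillation branch of Definition \ref{d-lip} immediately gives $|f(x)-f(y)|\le[\mu(B)]^\theta\|f\|_{\mathrm{lip}_\theta(\mathcal{X})}$. If instead $2d(x,y)>1$, then the $L^\infty$-branch combined with continuity makes both $|f(x)|$ and $|f(y)|$ bounded by $[\mu(B)]^\theta\|f\|_{\mathrm{lip}_\theta(\mathcal{X})}$, so the triangle inequality yields the same estimate up to a factor of $2$. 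In either case, \eqref{eq-doub2} applied to $B(x,d(x,y))$ gives $\mu(B)\le C_{(\mu)}V(x,y)$, and \eqref{e-p2} follows with the constant $2C_{(\mu)}^\theta$.

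There is no real obstacle here; the only point worth arranging is that the dichotomy in Definition \ref{d-lip} (small-radius oscillation versus large-radius $L^\infty$) be matched by the radius of the chosen ball, and this is precisely what the choice $r_B=2d(x,y)$ accomplishes. The asymmetric appearance of $x$ in $V(x,y)$ causes no trouble either, because Lemma \ref{l-ball}(i) guarantees $V(x,y)\sim V(y,x)$ with constants independent of $x$ and $y$, so centering the ball at $y$ would yield the same bound with a different implicit constant.
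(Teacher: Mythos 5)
Your proof is correct, and for both inequalities it is noticeably more direct than the paper's. For \eqref{e-p} the paper argues by contradiction: it assumes the exceptional set $E$ has positive measure, covers it by balls $B(x_\alpha^{k_0},r)$ centered at dyadic points, compares $\mu(B(x_{\alpha_0}^{k_0},r))$ with $V_r(x)$ via \eqref{tri-in} and doubling, and derives a contradiction with the $L^\infty$-branch of Definition \ref{d-lip} on a ball \emph{not} centered at $x$; it then upgrades the resulting a.e.\ bound to an everywhere bound by continuity. You instead test the definition on $B(x,r)$ itself, so $\mu(B)=V_r(x)$ on the nose and the only nontrivial step is passing from the essential supremum to the pointwise value at $x$ — which is legitimate because $f$ is continuous (as noted just before the lemma), balls form a basis of open neighborhoods, and every ball has positive measure by the standing assumptions. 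For \eqref{e-p2} the two arguments share the same case split in spirit, but your single ball $B(x,2d(x,y))$ makes the dichotomy of Definition \ref{d-lip} do all the work: when $2d(x,y)\le 1$ you use the oscillation branch exactly as the paper does with its ball $B(x,\frac{N+1}{N}d(x,y))$, and when $2d(x,y)>1$ you bound $|f(x)|$ and $|f(y)|$ by $\|f\|_{L^\infty(B)}$ on that one ball, whereas the paper invokes \eqref{e-p} at $x$ and $y$ separately with $r=2$ and then needs Lemma \ref{l-ball}(i) to absorb $V_2(x)+V_2(y)$ into $V(x,y)$. In both cases a single application of \eqref{eq-doub2} to $B(x,d(x,y))$ converts $\mu(B)$ into $C_{(\mu)}V(x,y)$, so your constant $2C_{(\mu)}^\theta$ is explicit and the dependence on Lemma \ref{l-ball}(i) and on the dyadic cube system disappears entirely. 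The only point worth stating explicitly, since you lean on it twice, is that the passage from $\|f\|_{L^\infty(B)}$ to the pointwise values $|f(x)|$, $|f(y)|$ uses continuity together with the assumption $\mu(B(x,\rho))>0$ for all $\rho>0$; with that sentence added, the proof is complete.
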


To prove Lemma \ref{l-point}, we need the following dyadic cube system
established by Hyt\"onen and Kairema in \cite[Theorem 2.2]{hk}.

\begin{lemma}\label{2-cube}
Let $c_0,C_0 \in(0,\infty)$ and $\delta\in(0, 1)$
be such that $c_0<C_0$ and $12A_0^3C_0\delta\leq c_0$.
Assume that $\mathcal{A}_k$, a set of indices for
any $k \in\mathbb{Z}$, and a set of points, $\{ x_\alpha^k:\ k \in\mathbb{Z} ,
\alpha \in \mathcal{A}_k \} \subset \mathcal{X}$, have the following
properties: for any $k \in\mathbb{Z}$,
\begin{equation}\label{dis-re}
d(x^k_\alpha, x^k_\beta)\geq c_0\delta^k\
\text{if}\ \alpha\neq\beta,\
\text{and}\ \min_{\alpha\in\mathcal{A}_k}d(x,x_\alpha^k)
<C_0\delta^k\ \text{for any}\ x\in\mathcal{X}.
\end{equation}
Then there exists a family of sets,
$\{ Q_\alpha^k : k \in\mathbb{Z} , \alpha \in \mathcal{A}_k \}$, such that
\begin{enumerate}
\item[{\rm(i)}] for any $k\in\mathbb{Z}$, 
$\{ Q_\alpha^k:\ \alpha \in \mathcal{A}_k \}$
is disjoint and $\bigcup_{\alpha\in\mathcal{A}_k}
Q_\alpha^k=\mathcal{X}$;
\item[{\rm(ii)}] if $l, k\in\mathbb{Z}$ and $l\leq k$, then,
for any $\alpha\in\mathcal{A}_l$ and $\beta\in\mathcal{A}_k$,
either $Q_\beta^k\subset Q_\alpha^l$
or $Q_\beta^k\cap Q_\alpha^l=\emptyset$;
\item[{\rm(iii)}] for any $k\in\mathbb{Z}$ and $\alpha\in\mathcal{A}_k$,
$B(x^k_\alpha, c_\#\delta^k)\subset Q_\alpha^k
\subset B(x^k_\alpha, C_\#\delta^k)$, where $c_\#:=(3A_0^2)^{-1}c_0$
and $C_\#:=\max\{2A_0C_0,1\}$.
\end{enumerate}
\end{lemma}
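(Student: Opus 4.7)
The plan is to follow the Christ/Hyt\"onen--Kairema construction: build a forest on the index set $\bigsqcup_k\mathcal{A}_k$, carve the space into Voronoi-type cells at each scale, and modify those cells scale by scale to enforce strict nesting. The smallness of $\delta$ encoded in $12A_0^3C_0\delta\le c_0$ is what will make the modifications ``small enough'' that the final cubes remain sandwiched between the balls required in (iii).

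First, for each $k\in\mathbb{Z}$ and each $\alpha\in\mathcal{A}_k$, I would fix a parent $\mathfrak{p}(\alpha)\in\mathcal{A}_{k-1}$ attaining $\min_{\beta\in\mathcal{A}_{k-1}} d(x_\alpha^k,x_\beta^{k-1})$; by the covering half of \eqref{dis-re} this distance is strictly less than $C_0\delta^{k-1}$. Iterating gives ancestor maps $\mathfrak{p}^{k-\ell}:\mathcal{A}_k\to\mathcal{A}_\ell$ for all $\ell\le k$, making $\bigsqcup_k\mathcal{A}_k$ into a forest. Fixing any total order on each $\mathcal{A}_k$, let $V_\alpha^k$ be the half-open Voronoi cell of $x_\alpha^k$ with ties broken by the chosen order; \eqref{dis-re} then yields $\bigsqcup_\alpha V_\alpha^k=\mathcal{X}$ and $V_\alpha^k\subset B(x_\alpha^k,C_0\delta^k)$. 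I would then construct the $Q_\alpha^k$ by an inductive procedure from coarser to finer scales, at each step agreeing with $V_\alpha^k$ modulo a boundary adjustment that forces $Q_\alpha^k\subset Q_{\mathfrak{p}(\alpha)}^{k-1}$: reassign each ``orphan'' point $x\in V_\alpha^k\setminus Q_{\mathfrak{p}(\alpha)}^{k-1}$ to its true parent's cube, and adopt each $x\in Q_{\mathfrak{p}(\alpha)}^{k-1}$ that the Voronoi rule placed into a ``foreign'' cell $V_\beta^k$ with $\mathfrak{p}(\beta)\ne\mathfrak{p}(\alpha)$ into some $V_{\alpha'}^k$ with $\mathfrak{p}(\alpha')=\mathfrak{p}(\alpha)$. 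Properties (i) and (ii) are then built into the construction.

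The main obstacle is property (iii). The outer inclusion $Q_\alpha^k\subset B(x_\alpha^k,C_\#\delta^k)$ requires controlling the displacement caused by adoption: if $x\in Q_\alpha^k$ originally lay in some $V_\gamma^k$, then $d(x_\alpha^k,x_\gamma^k)$ is bounded via the quasi-triangle inequality by a geometric series in the parent separations $C_0\delta^{k-1},C_0\delta^{k-2},\ldots$, which sums to at most $2A_0C_0\delta^k\le C_\#\delta^k$ since $\delta<1$. The inner inclusion $B(x_\alpha^k,c_\#\delta^k)\subset Q_\alpha^k$ is the delicate direction and is where essentially all the geometric content of the lemma lies: I must show that a point $x$ with $d(x,x_\alpha^k)<c_\#\delta^k=(3A_0^2)^{-1}c_0\delta^k$ is never carried away from the ancestor chain of $\alpha$ at any coarser level. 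Starting from the separation $d(x_\alpha^k,x_\beta^k)\ge c_0\delta^k$ and the bound on $d(x_{\mathfrak{p}^j(\alpha)}^{k-j},x_\alpha^k)$ obtained by iterating \eqref{tri-in} with weights $C_0\delta^{k-1},\ldots,C_0\delta^{k-j}$, the hypothesis $12A_0^3C_0\delta\le c_0$ provides exactly the numerical margin placing $x$ strictly inside the Voronoi cell of every $x_{\mathfrak{p}^j(\alpha)}^{k-j}$, so that no ancestor-level reassignment ever ejects $x$ from the cube indexed by $\alpha$.
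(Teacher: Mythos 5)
First, a point of comparison: the paper does not prove this lemma at all --- it is quoted verbatim from Hyt\"onen and Kairema \cite[Theorem 2.2]{hk} --- so your proposal must be judged against the construction in that reference. Your outline is in the right family (a Christ-type construction via an ancestry forest), but the specific mechanism you propose, namely independent global Voronoi partitions $\{V_\alpha^k\}_{\alpha}$ at each scale patched by ``orphan/adoption'' reassignments, does not work: both halves of (iii) fail for it. For the outer inclusion, when a point $x\in Q^{k-1}_{\mathfrak{p}(\alpha)}$ sits in a foreign cell $V_\beta^k$ with $\mathfrak{p}(\beta)\ne\mathfrak{p}(\alpha)$, you adopt it into ``some'' sibling cell $V_{\alpha'}^k$; but the nearest child of $\mathfrak{p}(\alpha)$ to $x$ can be at distance comparable to $\delta^{k-1}$ (the children of $\mathfrak{p}(\alpha)$ all lie within $C_0\delta^{k-1}$ of $x^{k-1}_{\mathfrak{p}(\alpha)}$, but need not come within $O(\delta^k)$ of every point of $Q^{k-1}_{\mathfrak{p}(\alpha)}$), which destroys $Q_{\alpha'}^k\subset B(x_{\alpha'}^k,C_\#\delta^k)$. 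Relatedly, your geometric series points the wrong way: the listed terms $C_0\delta^{k-1},C_0\delta^{k-2},\dots$ increase as one goes up the tree and already exceed $2A_0C_0\delta^k$ at the first term; the series that does sum to at most $2A_0C_0\delta^k$ is the downward one $C_0\delta^k, C_0\delta^{k+1},\dots$ over descendants.

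The inner inclusion is where the proposal is irreparably off. The key claim --- that $d(x,x_\alpha^k)<c_\#\delta^k$ forces $x$ to lie strictly inside the global Voronoi cell of every ancestor $x^{k-j}_{\mathfrak{p}^j(\alpha)}$ --- is false, and no smallness of $\delta$ can rescue it: $x_\alpha^k$ may be exactly equidistant from $x^{k-1}_{\mathfrak{p}(\alpha)}$ and from another level-$(k-1)$ point $y$ (the parent being then chosen by tie-breaking), and a point $x$ arbitrarily close to $x_\alpha^k$ but displaced toward $y$ lies strictly inside $y$'s Voronoi cell. In the actual construction the parent's cube is precisely \emph{not} its Voronoi cell: membership is decided through the ancestry tree of the reference points themselves (a point belongs to the level-$k$ cube of $\alpha$ exactly when it is a limit of reference points whose level-$k$ ancestor is $\alpha$, with tie-breaking made consistent across scales), and the inner inclusion rests on the separation lemma you never isolate: any $x_\gamma^l$ descending from some $\beta\ne\alpha$ must pass through a level-$(k+1)$ ancestor whose \emph{nearest} level-$k$ point is $\beta$, hence that ancestor is at distance at least $c_0\delta^k/(2A_0)$ from $x_\alpha^k$, while the remaining descent moves it by at most $A_0C_0\delta^{k+1}/(1-A_0\delta)$; the hypothesis $12A_0^3C_0\delta\le c_0$ is exactly what makes this wandering small enough to leave the margin $c_\#\delta^k=c_0\delta^k/(3A_0^2)$. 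As written, the construction you describe does not satisfy (iii), so the proof has a genuine gap rather than a fixable omission of detail.
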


Points in $\{ x_\alpha^k:\ k \in\mathbb{Z} ,
\alpha \in \mathcal{A}_k \} \subset \mathcal{X}$ are 
called \emph{dyadic points}.
For any $k\in\mathbb{Z}$, let 
$$\mathcal{X}^k:=\left\{x_\alpha^k:\ \alpha\in\mathcal{A}_k\right\}.$$
By the construction of dyadic points in \cite[2.21]{hk},
we may assume that $\mathcal{X}^k$ is countable and
$\mathcal{X}^k\subset \mathcal{X}^{k+1}$ for any $k\in\mathbb{Z}$. 
For any $k\in\mathbb{Z}$, define
$$\mathcal{Y}^k:=
\mathcal{X}^{k+1}\setminus\mathcal{X}^k$$
and
$$\mathcal{G}_k:=\left\{\alpha\in\mathcal{A}_{k+1}:\ y_\alpha^k:=
x_\alpha^{k+1}\in\mathcal{Y}^k\right\}.$$
For any $x \in \mathcal{X}$, let
$$d(x,\mathcal{Y}^k):=\inf_{y\in\mathcal{Y}^k}d(x,y).$$

Next, we prove Lemma \ref{l-point}.

\begin{proof}[Proof of Lemma \ref{l-point}]
Notice that $f$ is continuous. To show \eqref{e-p}, 
it suffices to prove that \eqref{e-p} holds true for almost every 
$x\in\mathcal{X}$. For this purpose, assume that 
\begin{equation}\label{c-1}
E:=\left\{x\in\mathcal{X}: |f(x)| > C\|f\|_{\mathrm{lip}_\theta(
\mathcal{X})}[V_r(x)]^\theta\right\} \ \text{and}\  \mu(E)>0,
\end{equation}
where $C$ is determined later. 
Find $k_0\in\mathbb{Z}$ such that 
$C_\#\delta^{k_0}< r$ where $C_\#$ is the same 
as in Lemma~\ref{2-cube}(iii). 
For any $\alpha\in \mathcal{A}_{k_0}$, let $E_\alpha:=E\cap 
B(x_\alpha^{k_0},r)$.
Then, by (i) and (ii) 
of Lemma \ref{2-cube}, we have 
$E=\bigcup_{\alpha\in \mathcal{A}_{k_0}} E_\alpha.$
From this, the fact that $\mathcal{A}_{k_0}$ is countable, and 
\eqref{c-1}, we deduce that there exists $\alpha_0\in \mathcal{A}_{k_0}$
such that 
\begin{equation}\label{c-2}
\mu\left(E_{\alpha_0}\right)>0.
\end{equation}
By \eqref{tri-in}, we conclude that, for any $y\in B(x_{\alpha_0}^{k_0},r)$ and $x\in E_{\alpha_0}$,  
$$
d(y,x)\leq A_0\left[d\left(y, x_{\alpha_0}^{k_0}\right)+d\left(x_{\alpha_0}^{k_0},
x\right)\right]\leq 2A_0r,
$$
which further implies that $B(x_{\alpha_0}^{k_0},r)\subset B(x,2A_0r)$.
Combining this and \eqref{eq-doub2}, we find that, 
for any  $x\in E_{\alpha_0}$,
\begin{equation}\label{c-3}
\mu\left(B\left(x_{\alpha_0}^{k_0},r\right)\right)
\leq C_{(\mu)}(2A_0)^\omega\mu(B(x,r)).
\end{equation}
This, together with \eqref{c-1}, further implies that, 
for any  $x\in E_{\alpha_0}$,
$$
|f(x)|> CC_{(\mu)}^{-\theta}(2A_0)^{-\theta\omega} 
\left[\mu\left(B\left(x_{\alpha_0}^{k_0},r\right)\right)\right]^\theta
\|f\|_{\mathrm{lip}_\theta(\mathcal{X})}.
$$
Using this and \eqref{c-2}, we infer that 
\begin{equation}\label{c-4}
\|f\|_{L^\infty(B(x_{\alpha_0}^{k_0},r))}
>CC_{(\mu)}^{-\theta}(2A_0)^{-\theta\omega} 
\left[\mu\left(B\left(x_{\alpha_0}^{k_0},r\right)\right)\right]^\theta
\|f\|_{\mathrm{lip}_\theta(\mathcal{X})}.
\end{equation}
Choose $C:=2C_{(\mu)}^\theta(2A_0)^{\theta\omega}$. 
Then \eqref{c-4} implies that 
$$
\|f\|_{\mathrm{lip}_\theta(\mathcal{X})}
\geq\frac{\|f\|_{L^\infty(B(x_{\alpha_0}^{k_0},r))}}{[\mu
(B(x_{\alpha_0}^{k_0},r))]^\theta}>
2\|f\|_{\mathrm{lip}_\theta(\mathcal{X})}.
$$
This is a contradiction and hence 
$\mu(E)=0$. This show that \eqref{e-p} holds true for almost every 
$x\in\mathcal{X}$ and finishes the prove of \eqref{e-p}. 

To prove \eqref{e-p2}, we consider the following 
two cases on $d(x,y)$. 

Case 1. $d(x,y)<1$. In this case, since $d(x,y)<1$, if follows that there 
exists $N\in \mathbb{N}$ such that 
\begin{equation}\label{e-dxy}
\frac{N+1}{N}d(x,y)<1.
\end{equation}
Let $B_N:=B(x, \frac{N+1}{N}d(x,y))$. Then $x,y\in B_N$. 
By \eqref{e-dxy}, Definition \ref{d-lip}, \eqref{eq-doub}, 
and $(N+1)/N\leq 2$, 
we obtain 
$$ 
|f(x)-f(y)|\leq \|f\|_{\mathrm{lip}_\theta(\mathcal{X})} [\mu(B_N)]^\theta
\lesssim \|f\|_{\mathrm{lip}_\theta(\mathcal{X})} [V(x,y)]^\theta.
$$
This is the desired estimate. 

Case 2. $d(x,y)\geq 1$. In this case, applying \eqref{e-p}, 
Lemma \ref{l-ball}, and \eqref{eq-doub}, we deduce that 
\begin{align*}
|f(x)-f(y)|&\leq |f(x)|+|f(y)|\\
&\lesssim \|f\|_{\mathrm{lip}_\theta(\mathcal{X})}[V_2(x)]^\theta
+\|f\|_{\mathrm{lip}_\theta(\mathcal{X})}[V_2(y)]^\theta\\
&\lesssim \|f\|_{\mathrm{lip}_\theta(\mathcal{X})}[V_2(x)
+V_2(y)+V(x,y)]^\theta\\
&\lesssim \|f\|_{\mathrm{lip}_\theta(\mathcal{X})}[V(x,y)]^\theta.
\end{align*}
This is the desired estimate. Combining the above two cases, 
we finish the proof of \eqref{e-p2} and hence Lemma \ref{l-point}.
\end{proof}

Now, we recall the concepts of
test functions and distributions on $\mathcal{X}$; see, for instance,
\cite{hmy06,hmy08}.
For any $\gamma \in (0,\infty)$, the function $P_\gamma$ 
with \emph{Polynomial decay} is
defined by setting, for any $x,y\in\mathcal{X}$ and
$r\in(0,\infty)$,
\begin{equation}\label{decay}
P_\gamma(x,y;r):=\frac{1}{V_r(x)+V(x,y)}\left[\frac{r}{r
+d(x,y)}\right]^\gamma.
\end{equation}

\begin{definition}[test functions]
Let $x_0\in\mathcal{X}$, $\beta \in (0,1]$,
and $r, \gamma \in (0,\infty)$.
If a measurable function $f$ on $\mathcal{X}$ satisfies that
there exists a positive constant $C$ such that
\begin{enumerate}
\item[{\rm(i)}] for any $x\in\mathcal{X}$,
\begin{equation}\label{t-size}
|f(x)|\leq CP_\gamma(x_0,x;r);
\end{equation}
\item[{\rm(ii)}] for any $x,y \in \mathcal{X}$ satisfying
$d(x, y)\leq(2A_0)^{-1}[r + d(x_0, x)]$,
\begin{equation}\label{t-reg}
|f(x)-f(y)|\leq C\left[\frac{d(x,y)}{r+d(x_0,x)}\right]^\beta
P_\gamma(x_0,x;r),
\end{equation}
\end{enumerate}
where,  for any $x,y\in\mathcal{X}$ and $r\in(0,\infty)$,
$P_\gamma$ is the same as in \eqref{decay},
then $f$ is
called a \emph{test function of type $(x_0, r, \beta, \gamma)$}.

The symbol $\mathcal{G}(x_0, r, \beta,\gamma)$ denotes the collection
 of all test functions of type $(x_0, r, \beta, \gamma)$.
For any  $f\in \mathcal{G}(x_0, r, \beta, \gamma)$,
its \emph{norm $\|f\|_{\mathcal{G}(x_0, r, \beta, \gamma)}$}
in $\mathcal{G}(x_0, r, \beta, \gamma)$ is defined by setting
$$
\|f\|_{\mathcal{G}(x_0, r, \beta, \gamma)}:=\inf\{C\in(0,\infty):\
\text{\eqref{t-size} and \eqref{t-reg} hold}\}.
$$
\end{definition}

Observe that, for any $x_1,x_2\in\mathcal{X}$ and $r_1,r_2 \in(0,\infty)$,
$$\mathcal{G}(x_1,r_1,\beta,\gamma) 
= \mathcal{G}(x_2,r_2,\beta,\gamma)$$
with equivalent norms while the positive equivalence constants
may depend on $x_1$, $x_2$, $r_1$, and $r_2$.
For a fixed point $x_0\in\mathcal{X}$,
the space $\mathcal{G}(x_0, 1, \beta, \gamma)$ is simplified
by $\mathcal{G}(\beta,\gamma)$.
Usually, $\mathcal{G}(\beta,\gamma)$ 
is called the \emph{spaces of test functions} on $\mathcal{X}$.

Fix $\epsilon\in (0, 1]$ and $\beta, \gamma \in (0, \epsilon]$.
The symbol $\mathcal{G}^\epsilon_0(\beta,\gamma)$  
denotes the completion
of the set $\mathcal{G}(\epsilon, \epsilon)$ in 
$\mathcal{G}(\beta,\gamma)$ with
the norm of $\mathcal{G}^\epsilon_0(\beta,\gamma)$ defined
by setting $\|\cdot\|_{\mathcal{G}^\epsilon_0(\beta,\gamma)}:=
\|\cdot\|_{\mathcal{G}(\beta,\gamma)}$. The dual space
$(\mathcal{G}^\epsilon_0(\beta,\gamma))'$ 
is defined to be the
collection of all continuous linear functionals from
$\mathcal{G}^\epsilon_0(\beta,\gamma)$ to $\mathbb{C}$,
equipped with the weak-$\ast$ topology.
Usually, $(\mathcal{G}^\epsilon_0(\beta,\gamma))'$
is called the
\emph{spaces of distributions} on $\mathcal{X}$.

The following proposition indicates that all test functions are 
\emph{pointwise multipliers} on the inhomogeneous Lipschitz space. 

\begin{proposition}\label{p-pm}
Let $\omega$ be as in \eqref{eq-doub}, $\theta\in (0,1/\omega]$, 
$\beta\in [\theta\omega,1]$, and $\gamma\in (0,\infty)$. Then 
there exists a positive constant $C$ such that, for any 
$\psi \in \mathcal{G}(\beta,\gamma)$ and 
$f\in \mathrm{lip}_\theta(\mathcal{X})$, 
$$
\|\psi f\|_{\mathrm{lip}_\theta(\mathcal{X})}\leq C\|\psi\|_{
\mathcal{G}(\beta,\gamma)}\|f\|_{\mathrm{lip}_\theta(\mathcal{X})}.
$$
\end{proposition}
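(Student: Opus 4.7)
The plan is to verify the defining seminorm of $\mathrm{lip}_\theta(\mathcal{X})$ for the product $\psi f$, splitting according to whether the ball radius $r_B$ is large or small, as in Definition~\ref{d-lip}. As a preliminary step, the size estimate \eqref{t-size} together with $P_\gamma(x_0,z;1)\le 1/V_1(x_0)$ for every $z\in\mathcal{X}$ yields the uniform bound $\|\psi\|_{L^\infty(\mathcal{X})}\lesssim\|\psi\|_{\mathcal{G}(\beta,\gamma)}$, with the hidden constant depending only on $x_0$.

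For a ball $B=B(x_B,r_B)$ with $r_B\in(1,\infty)$, the quantity to control is $\|\psi f\|_{L^\infty(B)}/[\mu(B)]^\theta$. Combining the $L^\infty$ bound on $\psi$ with \eqref{e-p} applied at scale $r=r_B$ gives $|\psi(y)f(y)|\lesssim\|\psi\|_{\mathcal{G}(\beta,\gamma)}\|f\|_{\mathrm{lip}_\theta(\mathcal{X})}[V_{r_B}(y)]^\theta$ for every $y\in B$; since $d(y,x_B)<r_B$, Lemma~\ref{l-ball}(i) yields $V_{r_B}(y)\sim V_{r_B}(x_B)=\mu(B)$ and this case closes at once.

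For $r_B\in(0,1]$ I use the Leibniz-type decomposition
$$\psi(x)f(x)-\psi(y)f(y)=\psi(x)[f(x)-f(y)]+[\psi(x)-\psi(y)]f(y)$$
for $x,y\in B$. The first piece is immediate from $\|\psi\|_{L^\infty}\lesssim\|\psi\|_{\mathcal{G}(\beta,\gamma)}$ and the very definition of $\|f\|_{\mathrm{lip}_\theta(\mathcal{X})}$. For the second, \eqref{e-p} at scale $r=2$ gives $|f(y)|\lesssim\|f\|_{\mathrm{lip}_\theta(\mathcal{X})}[V_2(y)]^\theta$, and I bound $|\psi(x)-\psi(y)|$ by a sub-case analysis on the smoothness threshold in \eqref{t-reg}. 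When $d(x,y)\le(2A_0)^{-1}[1+d(x_0,x)]$, the regularity estimate yields $|\psi(x)-\psi(y)|\lesssim\|\psi\|_{\mathcal{G}(\beta,\gamma)}[d(x,y)]^\beta$; combined with $d(x,y)\le 2A_0r_B$ and Lemma~\ref{l-ball2} (which gives $V_2(y)\lesssim r_B^{-\omega}\mu(B)$), the product is bounded by $r_B^{\beta-\theta\omega}[\mu(B)]^\theta\le[\mu(B)]^\theta$, using precisely the hypotheses $\beta\ge\theta\omega$ and $r_B\le 1$. When the threshold fails, the chain $1+d(x_0,x)<2A_0\,d(x,y)\le 4A_0^2$ confines $B$ to a bounded neighbourhood of $x_0$, so $\mu(B)$ and $V_2(y)$ are both comparable to $V_1(x_0)$, and the crude bound $|\psi(x)-\psi(y)|\le 2\|\psi\|_{L^\infty(\mathcal{X})}$ already suffices.

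The main obstacle is the exponent matching between the two regularity scales at play: test-function smoothness is encoded metrically (a H\"older exponent $\beta$ in $d(x,y)$), while the Lipschitz norm is encoded via the measure (an exponent $\theta$ in $\mu(B)$), and the only conversion available on a general space of homogeneous type is $\mu(\lambda B)\le C_{(\mu)}\lambda^\omega\mu(B)$, which inevitably costs a factor $r_B^{-\omega}$ on small balls. This loss is absorbed exactly by the hypothesis $\beta\ge\theta\omega$; without it the cross term $[\psi(x)-\psi(y)]f(y)$ cannot be controlled at small scales, and the pointwise multiplier property would break down.
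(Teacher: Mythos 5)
Your proof is correct and follows the same overall architecture as the paper's: the split on $r_B\in(1,\infty)$ versus $r_B\in(0,1]$, the Leibniz decomposition of $\psi(x)f(x)-\psi(y)f(y)$, and a sub-case analysis on the smoothness threshold $d(x,y)\le (2A_0)^{-1}[1+d(x_0,x)]$ in \eqref{t-reg}. The bookkeeping in the cross term differs: the paper bounds $|f(x)|\le|f(x)-f(x_0)|+|f(x_0)|\lesssim[V(x,x_0)]^\theta+1$ and, in the regularity sub-case, cancels the factor $[(1+d(x,x_0))/d(x,y)]^{\theta\omega}$ coming from Lemma \ref{l-ball2} against the factor $[d(x,y)/(1+d(x_0,x))]^\beta$ from \eqref{t-reg}; you instead use $|f(y)|\lesssim[V_2(y)]^\theta\lesssim r_B^{-\theta\omega}[\mu(B)]^\theta$ and cancel $r_B^{-\theta\omega}$ against $[d(x,y)]^\beta\lesssim r_B^\beta$. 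Both routes exploit $\beta\ge\theta\omega$ in exactly the same way, and yours is marginally cleaner because all exponents end up expressed in the single variable $r_B$. One step you should make explicit: in your last sub-case, the assertion that $\mu(B)\sim V_1(x_0)$ does not follow merely from $B$ lying in a bounded neighbourhood of $x_0$ (a tiny ball in that neighbourhood can have tiny measure); you also need a lower bound on the radius. Your displayed chain does supply it, since $1\le 1+d(x_0,x)<2A_0\,d(x,y)<4A_0^2\,r_B$ forces $r_B>(4A_0^2)^{-1}$, but this should be stated, as it is precisely what gives $\mu(B)\gtrsim V_1(x_0)$ and hence $V_2(y)\lesssim\mu(B)$, which is the inequality the crude $L^\infty$ bound actually requires.
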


\begin{proof}
Without loss of generality, we assume that 
$\|f\|_{\mathrm{lip}_\theta(\mathcal{X})}=1$. 
Let $B:=B(x_B,r_B)\subset \mathcal{X}$. We consider 
two cases on $r_B$.

Case 1. $r_B\in(1,\infty)$. In this case, by 
$\mathcal{G}(\beta,\gamma)\subset L^\infty(\mathcal{X})$, we have,
for any $x\in \mathcal{X}$,  
$$
|\psi(x) f(x)|\leq \|\psi\|_{L^\infty(\mathcal{X})}|f(x)|\leq 
\|\psi\|_{\mathcal{G}(\beta,\gamma)}|f(x)|,
$$
which further implies that
\begin{equation}\label{e-r1}
\frac{\|\psi f\|_{L^\infty(B)}}{[\mu(B)]^\theta}\leq 
\|\psi\|_{L^\infty(\mathcal{X})}\frac{\|f\|_{L^\infty(B)}}{[\mu(B)]^\theta}
\leq \|\psi\|_{\mathcal{G}(\beta,\gamma)}
\|f\|_{\mathrm{lip}_\theta(\mathcal{X})}.
\end{equation}

Case 2. $r_B\in(0,1]$. In this case, for any $x,y\in B$ with $x\neq y$, 
$$d(x,y)\leq A_0[d(x,x_B)+d(x_B,y)]< 2A_0r_B$$
and hence $V(x,y)\lesssim \mu(B)$. 
Moreover, 
\begin{align*}
|\psi(x) f(x)-\psi(y) f(y)|&=|\psi(x) f(x)-\psi(y) f(x)+\psi(y) f(x)-\psi(y) f(y)|\\
&\leq |\psi(x)-\psi(y)||f(x)|+|\psi(y)||f(x)-f(y)|\\
&=:I_1+I_2.
\end{align*}
For $I_2$, by \eqref{e-p2}, we find that 
\begin{equation}\label{e-i2}
I_2\lesssim \|\psi\|_{\mathcal{G}(\beta,\gamma)}
[V(x,y)]^\theta\lesssim \|\psi\|_{\mathcal{G}(\beta,\gamma)}
[\mu(B)]^\theta.
\end{equation}
To estimate $I_1$, fix $x_0\in\mathcal{X}$. Then, from 
Lemma \ref{l-point}, we deduce that 
\begin{align*}
I_1&\leq |\psi(x)-\psi(y)|[|f(x)-f(x_0)|+|f(x_0)|]\\
&\lesssim |\psi(x)-\psi(y)|
\left\{[V(x,x_0)]^\theta+1\right\}.
\end{align*}
In what follows, we consider two subcases on $d(x,y)$.

Case 2.1. $d(x,y)>(2A_0)^{-1}[d(x,x_0)+1]$. In this case, 
by Lemma \ref{l-ball}(i), 
we conclude that 
\begin{align*}
[V(x,x_0)]^\theta+1&\sim [V(x,x_0)+1]^\theta
\sim [\mu(B(x,d(x,x_0)+1))]^\theta\\
&\lesssim [\mu(B(x,d(x,y)))]^\theta\sim [\mu(B)]^\theta.
\end{align*}
Thus, 
\begin{align}\label{e-i1-1}
I_1&\lesssim [|\psi(x)|+|\psi(y)|]
[\mu(B)]^\theta\lesssim 
\|\psi\|_{\mathcal{G}(\beta,\gamma)}[\mu(B)]^\theta.
\end{align}

Case 2.2. $d(x,y)\leq (2A_0)^{-1} [d(x,x_0)+1]$. In this case, by
Lemmas \ref{l-ball}(i) and \ref{l-ball2}, we obtain 
\begin{align*}
&[V(x,x_0)]^\theta+1\sim [\mu(B(x,d(x,x_0)+1))]^\theta\\
&\quad\lesssim \left[\frac{d(x,x_0)+1}{d(x,y)}\right]^{\theta\omega}
[V(x,y)]^\theta\lesssim \left[\frac{d(x,x_0)+1}{d(x,y)}\right]^{\theta\omega}
[\mu(B)]^\theta.
\end{align*}
Using this, \eqref{t-reg}, and $\beta\in [\theta\omega,1]$, we 
infer that 
\begin{align*}
I_1&\lesssim \|\psi\|_{\mathcal{G}(\beta,\gamma)}
\left[\frac{d(x,y)}{d(x,x_0)+1}\right]^{\beta-\theta\omega}
[\mu(B)]^\theta\notag\\
&\lesssim\|\psi\|_{\mathcal{G}(\beta,\gamma)}
[\mu(B)]^\theta.
\end{align*}
Combining this, \eqref{e-i2}, and \eqref{e-i1-1}, 
we conclude that, 
for any $x,y\in B$, 
$$
\frac{|\psi(x) f(x)-\psi(y) f(y)|}{[\mu(B)]^\theta}
\lesssim\|\psi\|_{\mathcal{G}(\beta,\gamma)},
$$
which, together with \eqref{e-r1}, further implies that 
$$
\|\psi f\|_{\mathrm{lip}_\theta(\mathcal{X})}\leq C\|\psi\|_{
\mathcal{G}(\beta,\gamma)}.
$$
This finishes the proof of Proposition \ref{p-pm}.
\end{proof}

\begin{proposition}\label{p-lipd}
Let $\omega$ be as in \eqref{eq-doub}, $\theta\in (0,\infty)$, 
$\beta\in (0,1]$, and $\gamma\in (\theta\omega,\infty)$. Then 
there exists a positive constant $C$ such that, for any 
$\psi \in \mathcal{G}(\beta,\gamma)$ and 
$f\in \mathrm{lip}_\theta(\mathcal{X})$, 
\begin{equation}\label{e-f-psi}
|\langle f,\psi\rangle|\leq C\|\psi\|_{
\mathcal{G}(\beta,\gamma)}\|f\|_{\mathrm{lip}_\theta(\mathcal{X})}.
\end{equation}
\end{proposition}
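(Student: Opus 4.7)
The plan is to bound $|\langle f,\psi\rangle| = \bigl|\int_{\mathcal{X}} f(x)\psi(x)\,d\mu(x)\bigr|$ by splitting the domain into a central unit ball and dyadic annuli centered at the fixed base point of $\mathcal{G}(\beta,\gamma)$. Fix $x_0\in\mathcal{X}$ with $\mathcal{G}(\beta,\gamma)=\mathcal{G}(x_0,1,\beta,\gamma)$, set $A_0:=B(x_0,1)$, and for $k\geq 1$ set $A_k:=B(x_0,2^k)\setminus B(x_0,2^{k-1})$. On each piece I will estimate $|\psi|$ by its size bound \eqref{t-size} and $|f|$ by the pointwise growth from Lemma \ref{l-point} (which controls $|f(x)|$ by $\|f\|_{\mathrm{lip}_\theta(\mathcal{X})}[V_r(x)]^\theta$ for any $r>1$), then sum the contributions.

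On $A_0$, applying Lemma \ref{l-point} with $r=2$ and then Lemma \ref{l-ball}(i) together with \eqref{eq-doub} gives $|f(x)|\lesssim \|f\|_{\mathrm{lip}_\theta(\mathcal{X})}[V_1(x_0)]^\theta$, while \eqref{t-size} yields $|\psi(x)|\lesssim \|\psi\|_{\mathcal{G}(\beta,\gamma)}/V_1(x_0)$. Multiplying and integrating over $A_0$ controls this piece by $[V_1(x_0)]^\theta \|\psi\|_{\mathcal{G}(\beta,\gamma)} \|f\|_{\mathrm{lip}_\theta(\mathcal{X})}$. For $k\geq 1$ and $x\in A_k$, apply Lemma \ref{l-point} with $r=2^k$ and Lemma \ref{l-ball}(i) to obtain $|f(x)|\lesssim \|f\|_{\mathrm{lip}_\theta(\mathcal{X})}[V_{2^k}(x_0)]^\theta$; since $d(x_0,x)\geq 2^{k-1}$, \eqref{t-size} yields $|\psi(x)|\lesssim \|\psi\|_{\mathcal{G}(\beta,\gamma)}\,2^{-k\gamma}/V_{2^{k-1}}(x_0)$. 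Combining these bounds with $\mu(A_k)\leq V_{2^k}(x_0)$ and the doubling estimate $V_{2^k}(x_0)\lesssim V_{2^{k-1}}(x_0)$ produces
$$
\int_{A_k}|f\psi|\,d\mu \lesssim 2^{-k\gamma}\,[V_{2^k}(x_0)]^\theta\, \|\psi\|_{\mathcal{G}(\beta,\gamma)}\|f\|_{\mathrm{lip}_\theta(\mathcal{X})}.
$$

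Finally, invoking \eqref{eq-doub} to bound $[V_{2^k}(x_0)]^\theta \leq C_{(\mu)}^\theta\,2^{k\omega\theta}[V_1(x_0)]^\theta$, each annulus contributes at most $2^{-k(\gamma-\omega\theta)}[V_1(x_0)]^\theta\|\psi\|_{\mathcal{G}(\beta,\gamma)}\|f\|_{\mathrm{lip}_\theta(\mathcal{X})}$, and the hypothesis $\gamma>\theta\omega$ guarantees that the resulting geometric series in $k$ converges. Summing over $k\geq 0$ gives \eqref{e-f-psi} with a constant of the shape $C_{(\gamma,\omega,\theta)}[V_1(x_0)]^\theta$, which is admissible because $x_0$ is fixed once and for all by the choice of $\mathcal{G}(\beta,\gamma)$. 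There is no substantive obstacle: the only delicate point is the competition between the polynomial decay rate $\gamma$ of test functions and the polynomial growth rate $\theta\omega$ of $f$ forced by iterating the inhomogeneous Lipschitz condition on balls of radius greater than one, and this is exactly the balance that the assumption $\gamma>\theta\omega$ is built to resolve; the regularity exponent $\beta$ of $\psi$ plays no role here, as only the size information \eqref{t-size} is needed.
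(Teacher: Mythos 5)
Your argument is correct. It rests on exactly the same balance as the paper's proof --- the polynomial growth $[V(x,x_0)]^\theta\lesssim[1+d(x,x_0)]^{\theta\omega}V_1(x_0)^\theta$ of an inhomogeneous Lipschitz function against the decay $\gamma$ of the test function, resolved by $\gamma>\theta\omega$ --- but the execution is genuinely different. The paper splits $f=\bigl(f-f(x_0)\bigr)+f(x_0)$, controls the first piece via the difference estimate \eqref{e-p2} and the second via \eqref{e-p}, and then disposes of both integrals in one stroke with the abstract integrability lemma, Lemma \ref{l-ball}(ii), applied to $P_{\gamma-\theta\omega}$ and $P_\gamma$. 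You instead bound $|f|$ directly on each dyadic annulus $A_k$ by \eqref{e-p} at scale $r=2^k$ (legitimate, since the constant in \eqref{e-p} is uniform in $r$ and $V_{2^k}(x)\sim V_{2^k}(x_0)$ on $A_k$ by Lemma \ref{l-ball}(i)), and you re-derive the convergence of the integral by hand as a geometric series $\sum_k 2^{-k(\gamma-\theta\omega)}$. What your route buys is transparency --- one sees exactly where $\gamma>\theta\omega$ enters and that only the size condition \eqref{t-size} is used --- at the cost of redoing, in this special case, the work that Lemma \ref{l-ball}(ii) packages; what the paper's route buys is brevity and reuse of the difference estimate \eqref{e-p2}, which it needs elsewhere anyway. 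Your final constant carries a factor $[V_1(x_0)]^\theta$, but so does the paper's (through the bound on $|f(x_0)|$ in its term ${\rm II}$), and this is admissible since $x_0$ is fixed by the normalization $\mathcal{G}(\beta,\gamma)=\mathcal{G}(x_0,1,\beta,\gamma)$. Your closing observation that $\beta$ is irrelevant here is also consistent with the paper, which likewise never invokes \eqref{t-reg}.
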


\begin{proof}
Without loss of generality, we assume that 
$\|f\|_{\mathrm{lip}_\theta(\mathcal{X})}=1$. 
To show this proposition, fix $x_0\in\mathcal{X}$, and, for any 
$f\in \mathrm{lip}_\theta(\mathcal{X})$
and $\psi \in \mathcal{G}(\beta,\gamma)$, write 
\begin{align*}
&\left|\int_\mathcal{X}f(x)\psi(x)\,d\mu(x)\right|\\
&\quad\leq \int_\mathcal{X}|f(x)-f(x_0)||\psi(x)|\,d\mu(x)
+|f(x_0)|\int_\mathcal{X}|\psi(x)|\,d\mu(x)\\
&\quad:={\rm I}+{\rm II}.
\end{align*}
For ${\rm I}$, from \eqref{e-p2} and \eqref{t-size}, we deduce that
\begin{equation*}
{\rm I}\lesssim \|\psi\|_{\mathcal{G}(\beta,\gamma)}
\int_{\mathcal{X}}[V(x,x_0)]^\theta
P_\gamma(x,x_0;1)\,d\mu(x),
\end{equation*}
where $P_\gamma$ is as in \eqref{decay}.
Notice that, by Lemma \ref{l-ball}(i) and \eqref{eq-doub}, 
$$
V(x,x_0)\sim \mu(B(x_0,d(x,x_0)))\leq \mu(B(x_0,d(x,x_0)+1))
\lesssim [d(x,x_0)+1]^\omega,
$$
which, combined with Lemma \ref{l-ball}(ii) and $\gamma\in 
(\theta\omega,\infty)$, further implies that 
$$
{\rm I}\lesssim \|\psi\|_{\mathcal{G}(\beta,\gamma)}
\int_{\mathcal{X}}
P_{\gamma-\theta\omega}(x,x_0;1)\,d\mu(x)\lesssim 
\|\psi\|_{\mathcal{G}(\beta,\gamma)}.
$$
For {\rm II}, using \eqref{e-p}, \eqref{t-size}, and Lemma \ref{l-ball}(ii), 
we infer that 
$$
{\rm II}\lesssim \|\psi\|_{\mathcal{G}(\beta,\gamma)}
\int_{\mathcal{X}}
P_{\gamma}(x,x_0;1)\,d\mu(x)\lesssim 
\|\psi\|_{\mathcal{G}(\beta,\gamma)}.
$$
This, together with the estimate of ${\rm I}$, 
finishes the proof of 
Proposition \ref{p-lipd}.
\end{proof}

As a direct corollary of Proposition \ref{p-lipd}, we have the following 
conclusion.

\begin{corollary}
Let $\omega$ be the same as in \eqref{eq-doub}, $\theta\in (0,1/\omega)$, 
$\eta\in (\theta\omega,1]$, $\beta\in(0,\eta]$, 
and $\gamma\in (\theta\omega,\eta)$. Then 
$\mathrm{lip}_\theta(\mathcal{X}) \subset  
(\mathcal{G}_0^\eta(\beta,\gamma))'$ continuously. 
\end{corollary}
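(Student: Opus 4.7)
The plan is to derive this corollary directly from Proposition \ref{p-lipd} together with the standard density/completion argument defining $\mathcal{G}_0^\eta(\beta,\gamma)$. First, I would verify that the parameters match the hypotheses of Proposition \ref{p-lipd}: since $\theta\in(0,1/\omega)$, we have $\theta\omega<1$; then $\gamma\in(\theta\omega,\eta)\subset(\theta\omega,\infty)$ and $\beta\in(0,\eta]\subset(0,1]$, so Proposition \ref{p-lipd} applies and yields the key estimate
$$
|\langle f,\psi\rangle|\leq C\|\psi\|_{\mathcal{G}(\beta,\gamma)}\|f\|_{\mathrm{lip}_\theta(\mathcal{X})}
$$
for every $f\in\mathrm{lip}_\theta(\mathcal{X})$ and every $\psi\in\mathcal{G}(\beta,\gamma)$, where $\langle f,\psi\rangle$ is interpreted as the Lebesgue integral $\int_\mathcal{X}f\psi\,d\mu$ (which is absolutely convergent by the proof of Proposition \ref{p-lipd}).

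Next, I would observe that $\mathcal{G}(\eta,\eta)\subset\mathcal{G}(\beta,\gamma)$ continuously: since $\beta\leq\eta$ and $\gamma<\eta$, both the size bound \eqref{t-size} with index $\gamma$ and the regularity bound \eqref{t-reg} with index $\beta$ (note that the factor $d(x,y)/[r+d(x_0,x)]$ is bounded by $(2A_0)^{-1}\leq 1$ on the relevant set) are weaker than the corresponding bounds with index $\eta$. In particular, for any $\psi\in\mathcal{G}(\eta,\eta)$ one has $\|\psi\|_{\mathcal{G}(\beta,\gamma)}\leq C\|\psi\|_{\mathcal{G}(\eta,\eta)}$.

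Given $f\in\mathrm{lip}_\theta(\mathcal{X})$, the map $T_f\colon\psi\mapsto\langle f,\psi\rangle$ is therefore a bounded linear functional on $\mathcal{G}(\eta,\eta)$ equipped with the $\|\cdot\|_{\mathcal{G}(\beta,\gamma)}$ norm, with operator norm bounded by $C\|f\|_{\mathrm{lip}_\theta(\mathcal{X})}$. By the definition of $\mathcal{G}_0^\eta(\beta,\gamma)$ as the completion of $\mathcal{G}(\eta,\eta)$ in this norm, $T_f$ extends uniquely to a bounded linear functional on $\mathcal{G}_0^\eta(\beta,\gamma)$: for any $\psi\in\mathcal{G}_0^\eta(\beta,\gamma)$ and any approximating sequence $\{\psi_n\}\subset\mathcal{G}(\eta,\eta)$ with $\psi_n\to\psi$ in $\|\cdot\|_{\mathcal{G}(\beta,\gamma)}$, the estimate above shows $\{T_f(\psi_n)\}$ is Cauchy in $\mathbb{C}$ and the limit is independent of the approximating sequence, so one sets $\langle f,\psi\rangle:=\lim_{n\to\infty}T_f(\psi_n)$. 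The bound passes to the limit, giving
$$
|\langle f,\psi\rangle|\leq C\|\psi\|_{\mathcal{G}_0^\eta(\beta,\gamma)}\|f\|_{\mathrm{lip}_\theta(\mathcal{X})},
$$
which is exactly the continuous inclusion $\mathrm{lip}_\theta(\mathcal{X})\subset(\mathcal{G}_0^\eta(\beta,\gamma))'$.

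Essentially no step is difficult; the only thing to watch is that the parameter ranges force $\gamma>\theta\omega$ (so that the polynomial decay of $\psi$ overcomes the polynomial growth $[V(x,x_0)]^\theta$ of $f$) and that $\beta\leq\eta,\gamma<\eta$ (so that elements of $\mathcal{G}(\eta,\eta)$ lie in $\mathcal{G}(\beta,\gamma)$). Both are guaranteed by the hypotheses, so the result is a straightforward consequence of Proposition \ref{p-lipd} combined with the density definition of $\mathcal{G}_0^\eta(\beta,\gamma)$.
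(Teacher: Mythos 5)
Your proposal is correct and is exactly the argument the paper intends: the paper states this as a ``direct corollary'' of Proposition \ref{p-lipd} without giving details, and the missing details are precisely your parameter check ($\beta\in(0,\eta]\subset(0,1]$, $\gamma\in(\theta\omega,\eta)\subset(\theta\omega,\infty)$), the continuous embedding $\mathcal{G}(\eta,\eta)\subset\mathcal{G}(\beta,\gamma)$, and the unique bounded extension of $\psi\mapsto\langle f,\psi\rangle$ to the completion $\mathcal{G}_0^\eta(\beta,\gamma)$.
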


\section{Proof of Theorem \ref{thm-lip-cs}}\label{proof}

Now, we establish an equivalence characterization 
of $\mathrm{lip}_\theta(\mathcal{X})$ via  
Carleson sequences. Let us recall the wavelet 
systems in \cite[Theorems 6.1 and 7.1 and 
Corollary 10.4]{ah13}. For any $k\in\mathbb{Z}$, 
denote by $V_k\subset L^2(\mathcal{X})$ 
the closed linear span of spline functions in \cite{ah13}. 
Let $s\in(0,1]$ and $\nu\in(0,\infty)$.  
For any $k\in\mathbb{Z}$, the function $E_k$ with \emph{exponential 
decay} is defined by setting, 
for any $x,y\in\mathcal{X}$ and $r\in(0,\infty)$,
\begin{equation}\label{def-e-decay}
E_k(x,y;r):=\exp\left(-\frac{\nu}{r}
\left[\frac{d(x,y)}{\delta^k}\right]^s\right).
\end{equation}

\begin{lemma}\label{l-wave1}
There exist constants $s\in(0,1]$, $\eta\in(0,1)$,
$C,\nu\in(0,\infty)$, and wavelet functions
$\{\phi_\alpha^k:\ k\in\mathbb{Z}, \alpha\in\mathcal{A}_k\}$  satisfying,
for any $k\in\mathbb{Z}$ and $\alpha\in\mathcal{A}_k$,
\begin{enumerate}
\item[{\rm(i)}] for any $x\in \mathcal{X}$,
$$
\left|\phi_\alpha^k(x)\right|\leq
C\left[V_{\delta^k}(x_\alpha^k)\right]^{-1/2}E_k\left(x,x_\alpha^k;1\right);
$$
\item[{\rm(ii)}] for any
$x, x'\in \mathcal{X}$ with $d(x,x')\leq\delta^k$,
$$
\left|\phi_\alpha^k(x)-\phi_\alpha^k(x')\right|
\leq C\left[V_{\delta^k}(x_\alpha^k)\right]^{-1/2}
\left[\frac{d(x,x')}{\delta^k}\right]^\eta
E_k\left(x,x_\alpha^k;1\right),
$$
\end{enumerate}
where $E_k$ is the same as in \eqref{def-e-decay}. 
Moreover, for any $k\in\mathbb{Z}$, the functions 
$\{\phi_\alpha^k\}_{k}$ form
an orthonormal base of $V_k$.
\end{lemma}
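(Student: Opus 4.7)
The plan is to follow the construction of \cite[Sections 6--10]{ah13} applied to the dyadic cube system of Lemma \ref{2-cube}. The statement is cited essentially verbatim from \cite[Theorems 6.1 and 7.1 and Corollary 10.4]{ah13}, so rather than reproducing the full argument I will sketch the three main construction steps and identify the hardest point.

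First, starting from the reference dyadic points $\{x_\alpha^k\}$ and cubes $\{Q_\alpha^k\}$ provided by Lemma \ref{2-cube}, I would build a scale-$k$ averaging operator
$$
T_k f(x) := \sum_{\alpha \in \mathcal{A}_k} \chi_\alpha^k(x) \frac{1}{\mu(Q_\alpha^k)} \int_{Q_\alpha^k} f\,d\mu,
$$
where $\{\chi_\alpha^k\}$ is a Lipschitz partition of unity adapted to $\{Q_\alpha^k\}$ (each $\chi_\alpha^k$ nonnegative, supported near $x_\alpha^k$ at scale $\delta^k$, and Lipschitz with constant of order $\delta^{-k}$). After a mild symmetrization, $T_k$ is self-adjoint and bounded on $L^2(\mathcal{X})$, and its kernel inherits polynomial H\"older regularity of some fixed order $s_0 \in (0,1]$ from the partition of unity. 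The spline space $V_k$ would then be defined as the closure of the range of an iterate $T_k^N$, and the nesting $V_k \subset V_{k+1}$ is inherited from the refinement of the dyadic cubes in Lemma \ref{2-cube}(ii).

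Second, within $V_k$ I would apply Gram--Schmidt orthogonalization to the smoothed bumps $T_k^N \mathbf{1}_{Q_\alpha^k}$, ordered compatibly with the dyadic centers. The separation $d(x_\alpha^k, x_\beta^k) \geq c_0 \delta^k$ for $\alpha \neq \beta$ from \eqref{dis-re} forces the Gram matrix to be almost diagonal with off-diagonal entries decaying rapidly in $d(x_\alpha^k, x_\beta^k)/\delta^k$. A Neumann-series inversion then expresses each $\phi_\alpha^k$ as a linear combination of smoothed bumps with coefficients that themselves decay exponentially in the dyadic distance. Substituting back produces the pointwise size estimate in (i); propagating the H\"older continuity of the kernel of $T_k^N$ through the same expansion yields the regularity estimate in (ii). Orthonormality and the basis property for $V_k$ are immediate from Gram--Schmidt.

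The main technical obstacle is upgrading the polynomial decay of a single averaging step to the stretched-exponential profile $E_k(\cdot, x_\alpha^k; 1) = \exp(-\nu[d(\cdot, x_\alpha^k)/\delta^k]^s)$ appearing in the conclusion. This is the central innovation of \cite{ah13}: by telescoping the iterated action of $T_k$ and carefully counting the number of dyadic centers within a given distance (using the doubling bound \eqref{eq-doub} together with the separation \eqref{dis-re}), the polynomial tails of a single smoothing step compound into genuine stretched-exponential decay after sufficiently many iterations, where the required number of iterations depends only on $A_0$, $\omega$, $\delta$, and $c_0$. The parameters $s \in (0,1]$, $\eta \in (0,1)$, and $\nu \in (0,\infty)$ then emerge from tracking the interplay between these compoundings, the H\"older regularity of the partition of unity, and the Neumann-series convergence in the Gram--Schmidt step.
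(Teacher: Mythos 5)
First, note that the paper offers no proof of Lemma \ref{l-wave1}: it is imported verbatim from \cite[Theorems 6.1 and 7.1 and Corollary 10.4]{ah13}, so your task is really to reconstruct the Auscher--Hyt\"onen construction, and your sketch departs from it at the one point where the departure is fatal. The central claim of your last paragraph --- that ``the polynomial tails of a single smoothing step compound into genuine stretched-exponential decay after sufficiently many iterations'', with the number of iterations depending only on $A_0$, $\omega$, $\delta$, $c_0$ --- is false. If a kernel $K_k(x,y)$ is bounded below by $c\,[V_{\delta^k}(x)]^{-1}(1+d(x,y)/\delta^k)^{-\gamma}$, then restricting the intermediate integration variables to $B(y,\delta^k)$ gives $K_k^{(N)}(x,y)\gtrsim c_N\,[V_{\delta^k}(x)]^{-1}(1+d(x,y)/\delta^k)^{-\gamma}$ for every fixed $N$: composition of polynomially decaying kernels is again polynomially decaying with the same exponent, so no finite iteration can produce the profile $E_k(x,x_\alpha^k;1)$. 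This is not a presentational gap; it is precisely the step where the exponential decay is supposed to be created, and the mechanism you propose cannot create it.

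In \cite{ah13} the decay has a different source. The splines $s_\alpha^k$ are defined as probabilities $\mathbb{P}(x\in\widehat{Q}_\alpha^k)$ over the random dyadic lattices; they are genuinely compactly supported in $B(x_\alpha^k,C\delta^k)$ and identically $1$ near $x_\alpha^k$, and their H\"older exponent $\eta$ comes from the thin-boundary-layer probability estimate for the random cubes, not from a Lipschitz partition of unity (which, besides giving the wrong regularity bookkeeping, would not reproduce the nesting $V_k\subset V_{k+1}$ that Lemma \ref{l-wave2} requires; it is unclear that the ranges of your $T_k^N$ are nested at all). The orthonormal basis of $V_k$ is then $\phi^k=(G_k)^{-1/2}s^k$ with $G_k$ the Gram matrix of the splines: bounded overlap makes $G_k$ a positive banded perturbation of the identity, and a Demko--Moss--Smith/Jaffard-type theorem for inverses and square roots in the algebra of matrices with $\exp(-\nu[d(x_\alpha^k,x_\beta^k)/\delta^k]^s)$ off-diagonal decay yields the exponential localization of $(G_k)^{-1/2}$, hence of $\phi_\alpha^k$; the exponent $s$ is exactly the power for which $d^s$ is subadditive, via Mac\'ias--Segovia metrization \cite{ms79}, which is how \eqref{tri-in} enters the constants. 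Note also that sequential Gram--Schmidt ``ordered compatibly with the dyadic centers'', as you propose, requires a linear order on $\mathcal{A}_k$ and destroys uniform localization; the symmetric (L\"owdin) orthogonalization $(G_k)^{-1/2}$ is what preserves it. To repair your write-up, replace the iterated-averaging construction and the compounding argument by (a) compact support of the splines and (b) exponential off-diagonal decay of $(G_k)^{-1/2}$ --- or simply cite \cite{ah13}, as the paper itself does.
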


\begin{lemma}\label{l-wave2}
There exist constants $s\in(0,1]$, $\eta\in(0,1)$,
$C,\nu\in(0,\infty)$, and wavelet functions
$\{\psi_\beta^{k+1}:\ k\in\mathbb{Z}, \beta\in\mathcal{G}_k\}$  satisfying,
for any $k\in\mathbb{Z}$ and $\beta\in\mathcal{G}_k$,
\begin{enumerate}
\item[{\rm(i)}] for any $x\in \mathcal{X}$,
$$
\left|\psi_\beta^{k+1}(x)\right|\leq
C\left[V_{\delta^k}(x_\beta^{k+1})\right]^{-1/2}E_k\left(x,x_\beta^{k+1};1\right);
$$
\item[{\rm(ii)}] for any
$x, x'\in \mathcal{X}$ with $d(x,x')\leq\delta^k$,
$$
\left|\psi_\beta^{k+1}(x)-\psi_\beta^{k+1}(x')\right|
\leq
C\left[V_{\delta^k}(x_\beta^{k+1})\right]^{-1/2}
\left[\frac{d(x,x')}{\delta^k}\right]^\eta
E_k\left(x,x_\beta^{k+1};1\right);
$$
\item[{\rm(iii)}]
$$
\int_{\mathcal{X}} \psi_\beta^{k+1}(x)\,d\mu(x)=0,
$$
\end{enumerate}
where $E_k$ is the same as in \eqref{def-e-decay}. 
Moreover, the functions $\{\psi_\alpha^k\}_{k,\alpha}$ form
an orthonormal base of $L^2(\mathcal{X})$ and an unconditional
base of $L^p(\mathcal{X})$ for any given $p\in(1,\infty)$.
\end{lemma}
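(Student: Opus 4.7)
The plan is to obtain these wavelets as the Auscher--Hyt\"onen construction \cite{ah13} adapted to our setup, proceeding through the following stages. First, I would fix the nested dyadic points $\mathcal{X}^k \subset \mathcal{X}^{k+1}$ from the Hyt\"onen--Kairema system of Lemma \ref{2-cube} and, for each scale $k$, build an averaging-type ``spline'' function $s_\alpha^k$ attached to each $x_\alpha^k$ via an iterative convolution/averaging procedure whose kernel is supported essentially in a ball of radius comparable to $\delta^k$. Because the averaging kernel has a Lipschitz character and the iteration smooths it further, each $s_\alpha^k$ inherits both an exponential-type spatial decay (measured via $E_k$) and a H\"older regularity exponent $\eta \in (0,1)$. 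I would then let $V_k$ be the closed linear span in $L^2(\mathcal{X})$ of $\{s_\alpha^k\}_{\alpha \in \mathcal{A}_k}$ and verify, using the separation estimate \eqref{dis-re} and the doubling condition \eqref{eq-doub}, that these spaces are nested: $V_k \subset V_{k+1}$.

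Next, I would orthonormalize within each $V_k$. The key step is to pass from $\{s_\alpha^k\}$ to an orthonormal basis $\{\phi_\alpha^k\}_{\alpha \in \mathcal{A}_k}$ of $V_k$ while preserving localization and smoothness. Since a naive Gram--Schmidt would destroy locality, I would use the ``probabilistic'' or Beylkin-type orthogonalization of \cite{ah13}: one inverts a ``Gram matrix'' $G_k$ whose entries decay exponentially in the $\delta^{-k}d(x_\alpha^k, x_{\alpha'}^k)$-scale, and one shows by a Schur/Combes--Thomas-type argument that $G_k^{-1/2}$ has the same off-diagonal exponential decay with a possibly smaller parameter $\nu$. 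Applying $G_k^{-1/2}$ to the spline vector yields the functions $\phi_\alpha^k$ of Lemma \ref{l-wave1}, whose size and regularity bounds (i) and (ii) follow by combining the decay of $G_k^{-1/2}$ with those of $s_\alpha^k$.

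For the present lemma, I would then focus on the orthogonal complement $W_k := V_{k+1} \ominus V_k$. Exploiting $\mathcal{X}^{k+1} = \mathcal{X}^k \cup \mathcal{Y}^k$, I would show that $W_k$ has dimension exactly $\#\mathcal{G}_k$ and construct an orthonormal basis $\{\psi_\beta^{k+1}\}_{\beta \in \mathcal{G}_k}$ by projecting the splines $s_\beta^{k+1}$ indexed by ``new'' points $y_\beta^k \in \mathcal{Y}^k$ onto $V_k^\perp$ and again orthonormalizing by the exponentially-decaying inverse-square-root trick. The size bound (i) and the regularity (ii) for $\psi_\beta^{k+1}$ follow just as in the refinable case. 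The mean-zero property (iii) is the crucial new point: since the constant function $1$ (locally) lies in $V_k$ for all $k$ (constants are reproduced by the spline averages), and $\psi_\beta^{k+1} \perp V_k$, one obtains $\int \psi_\beta^{k+1}\,d\mu = 0$ by testing against a suitable bump in $V_k$ and using the exponential decay to justify the pairing.

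Finally, the basis claims follow by telescoping: $L^2(\mathcal{X}) = \bigoplus_{k\in\mathbb{Z}} W_k$, where the density of $\bigcup_k V_k$ uses that the spline projectors form an approximation of the identity on $L^2(\mathcal{X})$, and $\bigcap_k V_k = \{0\}$ uses $\mu(\mathcal{X}) = \infty$ together with a standard argument. For the $L^p$-unconditional basis property with $p \in (1,\infty)$, I would invoke Calder\'on--Zygmund theory: the martingale-type square function built from $\{\langle \cdot, \psi_\beta^{k+1}\rangle \psi_\beta^{k+1}\}$ is bounded on $L^p(\mathcal{X})$ because its kernel obeys standard size and regularity estimates inherited from (i) and (ii), together with cancellation (iii). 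I expect the main obstacle to be the quantitative preservation of exponential decay under the orthogonalization $G_k^{-1/2}$, since one must track how the parameter $\nu$ degrades and ensure the bounds remain uniform in $k$; this is precisely where the Combes--Thomas estimate for matrices with exponentially decaying entries must be invoked carefully.
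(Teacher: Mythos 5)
The paper does not actually prove this lemma; it recalls it verbatim from Auscher--Hyt\"onen \cite[Theorems 6.1 and 7.1 and Corollary 10.4]{ah13}, and your outline is a faithful high-level summary of precisely that construction (splines built from the Hyt\"onen--Kairema cubes, orthonormalization via the exponentially decaying inverse square root of the Gram matrix, wavelets spanning $V_{k+1}\ominus V_k$, cancellation from $\sum_\alpha s_\alpha^k \equiv 1$ together with $\psi_\beta^{k+1}\perp V_k$, and Calder\'on--Zygmund theory for the $L^p$ unconditionality). So your approach coincides with the paper's source; the only minor inaccuracy is that the splines of \cite{ah13} are in fact compactly supported at scale $\delta^k$ rather than merely exponentially localized, the exponential tails (and the degradation of $\nu$) appearing only after the Gram-matrix orthogonalization step, exactly as you anticipate in your final paragraph.
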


\begin{remark}
\begin{enumerate}
\item[{\rm (i)}] The constant $\eta$ in Lemmas \ref{l-wave1} and \ref{l-wave2} 
comes from the construction of 
random dyadic cubes in \cite{ah13}, which is very important 
because it characterizes the smoothness of the wavelets. 
Moreover, from the construction of $\{\psi_\alpha^k\}_{k,\, \beta}$ 
and $\{\phi_\alpha^k\}_{k,\, \alpha}$ in \cite{ah13}, we 
deduce that, for any  $k_0\in\mathbb{Z}$, 
$\{\phi_\alpha^{k_0}\}_{\alpha\in\mathcal{A}_{k_0}}
\cup\{\psi_\beta^{k}:k\in\mathbb{Z},\ k\geq k_0,\ 
\text{and}\ \beta\in\mathcal{G}_k\}$ form
an orthonormal base of $L^2(\mathcal{X})$. 
Moreover, for any $k,l\in\mathbb{Z}$, 
$\alpha\in\mathcal{A}_{k}$, and 
$\beta\in\mathcal{G}_l$, 
$$
\int_{\mathcal{X}} \psi_\beta^{l+1}(x)\phi_\alpha^{k}\,d\mu(x)=0.
$$
\item[{\rm (ii)}] Using the wavelet systems in 
Lemmas \ref{l-wave1} and \ref{l-wave2}, 
He et al. \cite{hlyy}
introduced a kind of approximations of the identity with
exponential decay (for short, exp-ATI) 
and obtained new Calder\'on reproducing formulae
on $\mathcal{X}$, which proves necessary to
establish various real-variable characterizations of Hardy spaces.
Motivated by this, He et al. \cite{hhllyy} developed a complete
real-variable theory of Hardy spaces on $\mathcal{X}$
including various real-variable equivalent characterizations,
which solves an \emph{open problem} on the radial 
characterization of the Hardy space 
on $\mathcal{X}$ raised in \cite{cw77},
and the boundedness of sublinear operators. 
We refer the readers to \cite{fmy19,hyy25,zhy20} 
for more applications of exp-ATIs.
\item[{\rm (iii)}] The constants $s$ and $\nu$ in 
Lemmas \ref{l-wave1} and  \ref{l-wave2} are the same; 
see \cite[Theorems 6.1 and 7.1 and 
Corollary 10.4]{ah13} for more details. 
Thus, in what follows, we always use 
$s$ and $\nu$ to denote the same constant in 
Lemmas \ref{l-wave1} and  \ref{l-wave2}.
\end{enumerate}
\end{remark}

Now, we establish an 
equivalent characterization of  imhomogenous Lipschitz spaces  
via  Carleson sequences. To this end, 
let 
$$
\mathcal{D}_0:=\bigcup_{k=0}^\infty\left\{Q_\alpha^k:
\alpha\in\mathcal{A}_k\right\}.
$$

\begin{proposition}\label{c-lip}
Let $\omega$ be as in \eqref{eq-doub}, $\eta \in (0,1]$ be as in Lemma \ref{l-wave1},  
and $\theta\in (0,\eta/\omega)$. Then  there exists a positive constant 
$C$ such that, for any $f\in\mathrm{lip}_\theta(\mathcal{X})$, 
\begin{align*}
&{}\sup_{Q\in \mathcal{D}_0}\left\{\frac{1}{[\mu(Q)]^{1+2\theta}}\left[
\sum_{\{\alpha\in\mathcal{A}_0:Q_\alpha^0\subset Q\}}\left|
\left\langle f, \phi_\alpha^0\right\rangle\right|^2\right.\right.\\
&\quad\left.\left.+
\sum_{k=0}^\infty\sum_{\{\beta\in\mathcal{G}_k:
Q_\beta^{k+1}\subset Q\}}\left|\left\langle f, 
\psi_\beta^{k+1}\right\rangle\right|^2\right]\right\}^{\frac{1}{2}}
\leq C\|f\|_{\mathrm{lip}_\theta(\mathcal{X})}. 
\end{align*}
\end{proposition}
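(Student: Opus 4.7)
The plan is to estimate each wavelet coefficient individually via the vanishing-moment property of the wavelets, combining Lemma~\ref{l-point} (to control the oscillation of~$f$) with the size and exponential-decay estimates from Lemmas~\ref{l-wave1} and~\ref{l-wave2}. I fix $Q=Q_\gamma^{k_Q}\in\mathcal{D}_0$ with $k_Q\geq 0$ and, by homogeneity, normalize $\|f\|_{\mathrm{lip}_\theta(\mathcal{X})}=1$. The first sum collapses to at most one term: if $Q_\alpha^0\subset Q$ and $Q$ is at level $k_Q\geq 0$, the nesting property of Lemma~\ref{2-cube} forces $k_Q=0$ and $\alpha=\gamma$. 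I bound $|f|$ by \eqref{e-p}, pair it with Lemma~\ref{l-wave1}(i), and via an annular decomposition around $x_\gamma^0$ let $E_0$ absorb the polynomial growth of $[V_1(x)]^\theta$, obtaining $|\langle f,\phi_\gamma^0\rangle|^2\lesssim[V_1(x_\gamma^0)]^{1+2\theta}\sim[\mu(Q)]^{1+2\theta}$.

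For the wavelet coefficients in the second sum I use the vanishing moment (Lemma~\ref{l-wave2}(iii)) to rewrite
$$
\langle f,\psi_\beta^{k+1}\rangle=\int_{\mathcal{X}}\left[f(x)-f(x_\beta^{k+1})\right]\psi_\beta^{k+1}(x)\,d\mu(x),
$$
then bound the integrand by \eqref{e-p2} combined with Lemma~\ref{l-wave2}(i). By \eqref{eq-doub}, $V(x,x_\beta^{k+1})\lesssim[1+d(x,x_\beta^{k+1})/\delta^k]^\omega V_{\delta^k}(x_\beta^{k+1})$. Splitting the integral into $B(x_\beta^{k+1},\delta^k)$ and dyadic annuli beyond, the factor $E_k$ absorbs the polynomial $[1+d/\delta^k]^{\theta\omega}$ (which is where the hypothesis $\theta\omega<\eta\le 1$ first plays a role), yielding the per-wavelet bound $|\langle f,\psi_\beta^{k+1}\rangle|^2\lesssim[V_{\delta^k}(x_\beta^{k+1})]^{1+2\theta}\sim[\mu(Q_\beta^{k+1})]^{1+2\theta}$.

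It remains to carry out the Carleson summation. For each fixed $k\geq k_Q-1$, the cubes $\{Q_\beta^{k+1}:\beta\in\mathcal{G}_k,\,Q_\beta^{k+1}\subset Q\}$ are pairwise disjoint in $Q$, so $\sum_\beta\mu(Q_\beta^{k+1})\leq\mu(Q)$. The main obstacle is the summation over $k$: since $\mathcal{X}$ is assumed only to be doubling (no reverse-doubling or lower-dimension condition), the crude estimate $\sum_\beta[\mu(Q_\beta^{k+1})]^{1+2\theta}\leq[\mu(Q)]^{1+2\theta}$ is uniform in $k$ and hence not summable on its own. To overcome this I sharpen the per-wavelet estimate by fully exploiting the super-polynomial decay of $E_k$---this is the role flagged for Proposition~\ref{lip-wave} in the introduction---so that each term $|\langle f,\psi_\beta^{k+1}\rangle|^2$ acquires an additional geometric factor $\delta^{k\sigma}$ with $\sigma=\sigma(\eta-\theta\omega)>0$. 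Summing the resulting geometric series in $k$ against the disjoint-cube bound then produces the desired total estimate $\lesssim[\mu(Q)]^{1+2\theta}$, and the hypothesis $\theta<\eta/\omega$ enters precisely to ensure $\sigma>0$ and hence convergence.
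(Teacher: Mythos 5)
Your treatment of the first sum and your identification of the obstacle in the second sum are both sound, but the step you propose to resolve that obstacle does not exist, and this is exactly the crux of the proposition. The per-wavelet bound $|\langle f,\psi_\beta^{k+1}\rangle|\lesssim\|f\|_{\mathrm{lip}_\theta(\mathcal{X})}[\mu(Q_\beta^{k+1})]^{\theta+\frac12}$ that you derive from the vanishing moment, \eqref{e-p2}, and the size estimate of Lemma~\ref{l-wave2}(i) is essentially \emph{sharp}: by Proposition~\ref{lip-c} (equivalently \eqref{e-w-lip}), the function $f:=[\mu(Q_\beta^{k+1})]^{\theta+\frac12}\psi_\beta^{k+1}$ has $\mathrm{lip}_\theta$-norm comparable to $1$ and attains this bound exactly. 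Hence no ``additional geometric factor $\delta^{k\sigma}$'' can be extracted from the exponential decay of $E_k$; that decay is already spent making the integral $\int[1+d(x,x_\beta^{k+1})/\delta^k]^{\theta\omega}E_k(x,x_\beta^{k+1};1)\,d\mu(x)\lesssim V_{\delta^k}(x_\beta^{k+1})$ converge. The condition $\theta<\eta/\omega$ plays no role in producing decay in $k$ here. On a general doubling space (no reverse doubling, no uniform upper bound $\mu(Q_\beta^{k+1})\lesssim\delta^{k\omega'}$), the quantities $\max_\beta\mu(Q_\beta^{k+1})$ need not decay at any geometric rate relative to $\mu(Q)$, so the sum over $k$ of the sharp per-wavelet bounds genuinely diverges and your argument cannot close.

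The paper resolves this by abandoning per-wavelet estimates for the near part. Writing $\langle f,\psi_\beta^{k+1}\rangle=\langle f-f_{B_2},\psi_\beta^{k+1}\rangle$ with $B_2$ a ball comparable to $Q$, it splits $f-f_{B_2}$ into $[f-f_{B_2}]\mathbf{1}_{4A_0B_2}$ and $[f-f_{B_2}]\mathbf{1}_{\mathcal{X}\setminus(4A_0B_2)}$. For the first piece the whole double sum $\sum_{k\ge k_0}\sum_\beta|\langle\cdot,\psi_\beta^{k+1}\rangle|^2$ is controlled at once by Bessel's inequality for the orthonormal system $\{\psi_\beta^{k+1}\}$, namely by $\|[f-f_{B_2}]\mathbf{1}_{4A_0B_2}\|_{L^2(\mathcal{X})}^2\lesssim[\mu(B_2)]^{1+2\theta}$; no summability of individual coefficients is needed. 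Only for the second, far-away piece does one gain a genuine decaying factor $\exp[-\frac{2\nu}{3}(C_\#\delta^{k_0}/\delta^k)^s]$ (Lemma~\ref{lip-wave}(iii)), which is then summed against $\sum_\beta\mu(Q_\beta^{k+1})\le\mu(Q)$ with only the first power of the measure. If you want to complete your proof, you need to import this $L^2$-orthogonality argument; the purely pointwise route you sketch cannot work without an Ahlfors-regularity or reverse-doubling hypothesis, which the proposition deliberately avoids.
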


To prove Proposition \ref{c-lip}, we need the following 
lemma which contains some useful estimates on the pair 
$\langle f, \phi_\alpha^k\rangle$ and $\langle f, \psi_\beta^{k+1}\rangle$. 

\begin{lemma}\label{lip-wave}
Let $\theta\in(0,\infty)$ and $B:=B(x_B,r_B)\subset \mathcal{X}$. 
Then there exists a positive constant $C$
such that, for any $k\in\mathbb{Z}$, $\alpha\in \mathcal{A}_k$, and 
$f\in\mathrm{lip}_\theta(\mathcal{X})$, 
\begin{enumerate}
\item[{\rm (i)}] in general, 
\begin{align*}
&{}\int_{\mathcal{X}}|f(x)-f_B|
\left|\phi_\alpha^k(x)\right|\,d\mu(x)\\
&\quad\leq C\|f\|_{\mathrm{lip}_\theta(\mathcal{X})}
[\mu(B)]^\theta\sqrt{V_{\delta^k}(x_\alpha^k)}\left[1+\frac{\delta^k
+d(x_B,x_\alpha^k)}{r_B}\right]^{\theta\omega}; 
\end{align*}
\item[{\rm (ii)}] if $r_B\in(1,\infty)$, then 
$$
\int_{\mathcal{X}}|f(x)|\left|\phi_\alpha^k(x)\right|\,d
\mu(x)\leq C\|f\|_{\mathrm{lip}_\theta(\mathcal{X})}
[\mu(B)]^\theta\sqrt{V_{\delta^k}(x_\alpha^k)}\left[1+\frac{\delta^k
+d(x_B,x_\alpha^k)}{r_B}\right]^{\theta\omega}; 
$$
\item[{\rm (iii)}] if $\delta^k\leq r_B$ and $d(x_B,x_\alpha^k)<2\tau r_B$
for some $\tau\in[1,\infty)$, then 
\begin{align*}
&\int_{\{x\in\mathcal{X}:d(x_B,x)>4\tau A_0r_B\}}
|f(x)-f_B|\left|\phi_\alpha^k(x)\right|\,d\mu(x)\\
&\quad\leq C\|f\|_{\mathrm{lip}_\theta(\mathcal{X})}
[\mu(B)]^\theta\sqrt{V_{\delta^k}(x_\alpha^k)}
\exp\left[-\frac{\nu}{3}\left(\frac{\tau r_B}{\delta^k}\right)^s\right]; 
\end{align*}
\item[{\rm (iv)}] if $\delta^k\leq r_B$, $r_B\in(1,\infty)$,
and $d(x_B,x_\alpha^k)<2\tau r_B$
for some $\tau\in[1,\infty)$, then 
\begin{align*}
&\int_{\{x\in\mathcal{X}:d(x_B,x)>4\tau A_0r_B\}}
|f(x)|\left|\phi_\alpha^k(x)\right|\,d\mu(x)\\
&\quad\leq C\|f\|_{\mathrm{lip}_\theta(\mathcal{X})}
[\mu(B)]^\theta\sqrt{V_{\delta^k}(x_\alpha^k)}
\exp\left[-\frac{\nu}{3}\left(\frac{\tau r_B}{\delta^k}\right)^s\right]; 
\end{align*}

\item[{\rm (v)}] items through (i) to (iv) 
still hold true if $\phi_\alpha^k$ and 
$x_\alpha^k$ are replaced, respectively, 
by $\psi_\beta^{k+1}$ and $x_\beta^{k+1}$.
\end{enumerate}
\end{lemma}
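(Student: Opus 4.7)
The plan is to deduce all five estimates from a common template. For each part, I combine a pointwise bound on $f-f_B$ (or $f$) derived from Lemma~\ref{l-point} with the wavelet size estimate in Lemma~\ref{l-wave1}(i), then evaluate the resulting integral over the dyadic annuli $A_j:=\{x\in\mathcal{X}:2^{j-1}\delta^k\leq d(x,x_\alpha^k)<2^j\delta^k\}$ for $j\geq 1$ and $A_0:=B(x_\alpha^k,\delta^k)$ centered at $x_\alpha^k$, trading the polynomial growth on $A_j$ against the exponential factor that $E_k$ contributes there.

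For (i), I would first bound $|f(x)-f_B|\leq\mu(B)^{-1}\int_B|f(x)-f(y)|\,d\mu(y)$, apply \eqref{e-p2}, and invoke Lemma~\ref{l-ball2} (with $r_1=A_0[r_B+d(x,x_B)]$ and $r_2=r_B$) to obtain $|f(x)-f_B|\lesssim\|f\|_{\mathrm{lip}_\theta(\mathcal{X})}[\mu(B)]^\theta[1+d(x,x_B)/r_B]^{\theta\omega}$. On $A_j$, the quasi-triangle inequality gives $d(x,x_B)\lesssim 2^j\delta^k+d(x_\alpha^k,x_B)$, and the elementary inequality $1+2^ja+b\leq 2^j(1+a+b)$ (applied with $a=\delta^k/r_B$, $b=d(x_\alpha^k,x_B)/r_B$) yields $[1+d(x,x_B)/r_B]^{\theta\omega}\lesssim 2^{j\theta\omega}[1+(\delta^k+d(x_\alpha^k,x_B))/r_B]^{\theta\omega}$. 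Meanwhile $\mu(A_j)\lesssim 2^{j\omega}V_{\delta^k}(x_\alpha^k)$ by \eqref{eq-doub} and $E_k(x,x_\alpha^k;1)\leq\exp(-\nu 2^{(j-1)s})$ on $A_j$, so the convergent series $\sum_j 2^{j(\omega+\theta\omega)}\exp(-\nu 2^{(j-1)s})<\infty$ produces (i). Statement (ii) is identical, starting instead from \eqref{e-p} with $r=r_B\in(1,\infty)$ and Lemma~\ref{l-ball2} to reach the same pointwise bound on $|f|$.

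For (iii) and (iv), the region constraint $d(x_B,x)>4\tau A_0r_B$ together with $d(x_B,x_\alpha^k)<2\tau r_B$ forces $d(x,x_\alpha^k)>2\tau r_B\geq 2\tau\delta^k$. I would split the exponential factor $\exp(-\nu[d(x,x_\alpha^k)/\delta^k]^s)$ into two equal halves: on this region the first half is at most $\exp(-\tfrac{\nu}{2}[2\tau r_B/\delta^k]^s)\leq\exp(-\tfrac{\nu}{2}[\tau r_B/\delta^k]^s)$ (since $s\leq 1$ implies $2^s\geq 1$), while the second half is integrated by the same annular argument as in (i) or (ii). The residual polynomial $[1+(\delta^k+d(x_\alpha^k,x_B))/r_B]^{\theta\omega}\lesssim\tau^{\theta\omega}$ (using $\delta^k\leq r_B$ and $d(x_\alpha^k,x_B)<2\tau r_B$) is then absorbed via $\tau^{\theta\omega}\exp(-\tfrac{\nu}{2}[\tau r_B/\delta^k]^s)\lesssim\exp(-\tfrac{\nu}{3}[\tau r_B/\delta^k]^s)$, which follows because $\tau\leq\tau r_B/\delta^k$ and $t^{\theta\omega/s}\exp(-\nu t/6)$ is bounded on $[0,\infty)$. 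Part (v) then follows verbatim, replacing $\phi_\alpha^k$ and $x_\alpha^k$ by $\psi_\beta^{k+1}$ and $x_\beta^{k+1}$ and invoking the formally identical size estimate Lemma~\ref{l-wave2}(i); the vanishing-moment condition in Lemma~\ref{l-wave2}(iii) is not required here.

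The main obstacle is organizing the annular sum so as to produce the target exponent $\theta\omega$, rather than the $2\theta\omega$ one would get from the crude splitting $[1+d(x,x_B)/r_B]^{\theta\omega}\leq[1+d(x,x_\alpha^k)/r_B]^{\theta\omega}[1+d(x_\alpha^k,x_B)/r_B]^{\theta\omega}$. The dyadic decomposition avoids this loss because the $x$-dependence enters only through the single scalar factor $2^{j\theta\omega}$, which is absorbed by the rapid decay of $\exp(-\nu 2^{(j-1)s})$; only the $x$-free quantity $\delta^k+d(x_\alpha^k,x_B)$ remains outside the integral.
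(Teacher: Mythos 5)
Your proposal is correct and follows essentially the same route as the paper: the identical pointwise bounds $|f(x)-f_B|\lesssim\|f\|_{\mathrm{lip}_\theta(\mathcal{X})}[\mu(B)]^\theta[1+d(x,x_B)/r_B]^{\theta\omega}$ (and likewise for $|f|$ when $r_B>1$), only the size estimate of the wavelets, and for (iii)--(iv) the same device of peeling off a fixed fraction of the exponential to produce $\exp[-\frac{\nu}{3}(\tau r_B/\delta^k)^s]$ while using the remainder both to absorb the $\tau^{\theta\omega}$ factor and to run the integration. The only difference is bookkeeping: you evaluate the integral over dyadic annuli centered at $x_\alpha^k$ and sum $2^{j(\omega+\theta\omega)}e^{-\nu 2^{(j-1)s}}$, whereas the paper splits $\mathcal{X}$ into $B(x_\alpha^k,r_B+d(x_\alpha^k,x_B))$ and its complement and quotes the integral bound $\int_{\mathcal{X}}P_1(\cdot,x_\alpha^k;\delta^k)\,d\mu\lesssim 1$ from Lemma \ref{l-ball}(ii); the two computations are equivalent.
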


\begin{proof}
Without loss of generality, we assume that 
$\|f\|_{\mathrm{lip}_\theta(\mathcal{X})}=1$. 
We first prove (i). By \eqref{e-p2}, we have, for any $x\in \mathcal{X}$, 
\begin{align}\label{f-fb}
|f(x)-f_B|&\leq \frac{1}{\mu(B)}\int_{B}|f(x)-f(y)|\,d\mu(y)
\lesssim\frac{1}{\mu(B)}\int_{B}[V(x,y)]^\theta\,d\mu(y).
\end{align}
From \eqref{tri-in}, we deduce that, for any $y\in B$ 
and $z\in B(x,d(x,y))$, 
\begin{align*}
d(z,x_B)&\leq A_0[d(z,x)+d(x,x_B)]<A_0d(x,y)+A_0d(x,x_B)\\
&\leq A_0^2[d(x,x_B)+d(x_B,y)]+A_0d(x,x_B)\\
&\leq \left(A_0^2+A_0\right)d(x,x_B)+A_0^2r_B,
\end{align*}
Which further implies that $B(x,d(x,y))\subset 
B(x_B, (A_0^2+A_0)d(x,x_B)+A_0^2r_B)$.
By this, we conclude that, for $x\in\mathcal{X}$ and $y\in B$, 
\begin{equation}\label{e-v-b}
V(x,y)\lesssim\left[\frac{r_B+d(x,x_B)}{r_B}\right]^\omega\mu(B).
\end{equation}
By this, \eqref{f-fb}, and Lemma \ref{l-wave1}(i), we obtain 
\begin{align*}
&\int_{\mathcal{X}}|f(x)-f_B|\left|\phi_\alpha^k(x)\right|\,d
\mu(x)\\
&\quad\lesssim [\mu(B)]^\theta
\sqrt{V_{\delta^k}(x_\alpha^k)}\int_{\mathcal{X}}\left[\frac{r_B+
d(x,x_B)}{r_B}\right]^{\theta\omega}
\frac{1}{V_{\delta^k}(x_\alpha^k)}
E_k\left(x,x_\alpha^k;1\right)\,d\mu(x),
\end{align*}
where $E_k$ is as in \eqref{def-e-decay}.
To estimate the above integral, write 
\begin{align*}
&\int_{\mathcal{X}}
\left[\frac{r_B+d(x,x_B)}{r_B}\right]^{\theta\omega}
\frac{1}{V_{\delta^k}(x_\alpha^k)}
E_k\left(x,x_\alpha^k;1\right)\,d\mu(x)\\
&\quad =\int_{B(x_\alpha^k,r_B+d(x_\alpha^k,x_B))}
\left[\frac{r_B+d(x,x_B)}{r_B}\right]^{\theta\omega}
\frac{1}{V_{\delta^k}(x_\alpha^k)}
E_k\left(x,x_\alpha^k;1\right)\,d\mu(x)\\
&\qquad+\int_{\mathcal{X}\setminus B(x_\alpha^k,
r_B+d(x_\alpha^k,x_B))}\cdots\\
&\quad =: {\rm I_1}+{\rm I_2}.
\end{align*}
We first estimate ${\rm I_1}$. Using \eqref{tri-in}, we infer that, 
for any $x\in B(x_\alpha^k,r_B+d(x_\alpha^k,x_B))$, 
\begin{equation}\label{e-i-d}
d(x,x_B)\lesssim d(x,x_\alpha^k)+d(x_\alpha^k,x_B)\lesssim 
r_B+d(x_\alpha^k,x_B).
\end{equation}
From Lemmas \ref{l-ball}(i) and \ref{l-ball2}, we deduce that, 
for any $x\in \mathcal{X}$,
\begin{align}\label{e-i-b}
\frac{1}{V_{\delta^k}(x_\alpha^k)}
&\sim\frac{1}{V_{\delta^k}
(x_\alpha^k)+V(x_\alpha^k,x)}
\frac{V(x_\alpha^k, \delta^k
+d(x_\alpha^k,x))}{V_{\delta^k}(x_\alpha^k)}\notag\\
&\lesssim \frac{1}{V_{\delta^k}(x_\alpha^k)+V(x_\alpha^k,x)}
\left[\frac{\delta^k+d(x_\alpha^k,x)}{\delta^k}\right]^\omega.
\end{align}
Moreover, notice that, for any $x\in \mathcal{X}$ and $\Gamma\in(0,\infty)$, 
\begin{equation}\label{e-e-p}
E_k\left(x,x_\alpha^k;1\right)
\lesssim 
\left[\frac{\delta^k}{\delta^k+d(x_\alpha^k,x)}\right]^{\Gamma}.
\end{equation}
Combining this with $\Gamma:=\omega+1$, \eqref{e-i-d}, 
\eqref{e-i-b}, and Lemma \ref{l-ball}(ii), 
we find that 
\begin{align}\label{e-i}
{\rm I_1}&\lesssim 
\left[\frac{r_B+d(x_\alpha^k,x_B)}{r_B}\right]^{\theta\omega}
\int_{\mathcal{X}}P_1\left(x,x_\alpha^k;\delta^k\right)\,d\mu(x)\notag\\
&\lesssim  \left[\frac{r_B+d(x_\alpha^k,
x_B)}{r_B}\right]^{\theta\omega},
\end{align}
where $P_1$ is as in \eqref{decay} with $\gamma=1$.
To estimate ${\rm I_2}$, by \eqref{tri-in}, we conclude that, 
for any $x\in \mathcal{X}\setminus B(x_\alpha^k,r_B+d(x_\alpha^k,x_B))$,
\begin{equation*}
r_B+d(x,x_B)\lesssim r_B+d(x,x_\alpha^k)+d(x_\alpha^k,x_B)
\lesssim d(x,x_\alpha^k). 
\end{equation*}
This, together with \eqref{e-i-b}, \eqref{e-e-p} with 
$\Gamma:=\theta\omega+\omega+1$, and Lemma \ref{l-ball}(ii), further 
implies that 
\begin{align*}
{\rm I_2}&\lesssim \left(\frac{\delta^k}{r_B}\right)^{\theta\omega}
\int_\mathcal{X} \left[\frac{d(x,x_\alpha^k)}{\delta^k}\right]^{\theta\omega}
\frac{1}{V_{\delta^k}(x_\alpha^k)+V(x_\alpha^k,x)}\\
&\quad\times\left[\frac{\delta^k+d(x_\alpha^k,x)}{\delta^k}\right]^\omega
\left[\frac{\delta^k}{\delta^k+d(x_\alpha^k,x)}\right]^{\theta
\omega+\omega+1}\,d\mu(x)\\
&\lesssim  \left(\frac{\delta^k}{r_B}\right)^{\theta\omega}
\int_{\mathcal{X}}P_1\left(x,x_\alpha^k;\delta^k\right)\,d\mu(x)
\lesssim \left(\frac{\delta^k}{r_B}\right)^{\theta\omega}.
\end{align*}
Combining the estimates of ${\rm I_1}$ and ${\rm I_2}$, 
we obtain 
$$
\int_{\mathcal{X}}|f(x)-f_B|\left|\phi_\alpha^k(x)\right|\,d
\mu(x)\lesssim
[\mu(B)]^\theta\sqrt{V_{\delta^k}(x_\alpha^k)}\left[1+\frac{\delta^k
+d(x_B,x_\alpha^k)}{r_B}\right]^{\theta\omega}, 
$$
which completes the proof of (i). 

Next, we prove (ii). Since $r_B\in(1,\infty)$, by \eqref{e-p} and 
\eqref{l-ball2}, it follows that, for any $x\in\mathcal{X}$,
\begin{align}\label{e-f-f}
|f(x)|\lesssim [\mu(B(x,r_B))]^\theta
\lesssim [\mu(B)]^\theta
\left[\frac{r_B+d(x,x_B)}{r_B}\right]^{\theta\omega}.
\end{align}
Using this, Lemma \ref{l-wave1}(i), and the estimates of 
${\rm I_1}$ and ${\rm I_2}$, we infer that  
\begin{align*}
&\int_{\mathcal{X}}|f(x)|\left|\phi_\alpha^k(x)\right|\,d
\mu(x)\\
&\quad\lesssim[\mu(B)]^\theta
\sqrt{V_{\delta^k}(x_\alpha^k)}\\
&\qquad\times\int_{\mathcal{X}}\left[\frac{r_B
+d(x,x_B)}{r_B}\right]^{\theta\omega}
\frac{1}{V_{\delta^k}(x_\alpha^k)}
E_k\left(x,x_\alpha^k;1\right)\,d\mu(x)\\
&\quad\lesssim[\mu(B)]^\theta
\sqrt{V_{\delta^k}(x_\alpha^k)}\left[1+\frac{\delta^k
+d(x_B,x_\alpha^k)}{r_B}\right]^{\theta\omega}.
\end{align*}
This finishes the proof of (ii). 

Now, we show (iii). 
By \eqref{f-fb}, \eqref{e-v-b}, and Lemma \ref{l-wave1}(i), we find that 
\begin{align}\label{e-out-1}
&\int_{\{x\in\mathcal{X}:d(x_B,x)>4\tau A_0r_B\}}
|f(x)-f_B|\left|\phi_\alpha^k(x)\right|\,d\mu(x)\notag\\
&\quad\lesssim [\mu(B)]^\theta
\sqrt{V_{\delta^k}(x_\alpha^k)}
\int_{\{x\in\mathcal{X}:d(x_B,x)>4\tau A_0r_B\}}
\left[\frac{r_B+d(x,x_B)}{r_B}\right]^{\theta\omega}\notag\\
&\qquad\times
\frac{1}{V_{\delta^k}(x_\alpha^k)}E_k\left(x,x_\alpha^k;1\right)\,d\mu(x).
\end{align}
Notice that, if $\delta^k\leq r_B$ and 
$d(x_B,x_\alpha^k)<2\tau r_B$
for some $\tau\in[1,\infty)$, then, by \eqref{tri-in}, we have, 
for any $x\in \{x\in\mathcal{X}:d(x_B,x)>4\tau A_0r_B\}$, 
\begin{equation}\label{e-t-r}
4\tau A_0r_B<d(x_B,x)\leq A_0\left[d(x_B,x_\alpha^k)+
d(x_\alpha^k,x)\right]\leq A_0\left[2\tau r_B+
d(x_\alpha^k,x)\right],
\end{equation}
which further implies that $2\tau r_B<d(x_\alpha^k,x)$
and hence 
\begin{align}\label{e-e-dec}
\left[\frac{d(x,x_\alpha^k)}{\delta^k}\right]^s 
&> \frac{1}{3}
\left\{\left[\frac{\tau r_B}{\delta^k}\right]^s
+\left[\frac{d(x,x_\alpha^k)}{4\delta^k}\right]^s
+\left[\frac{d(x,x_\alpha^k)}{4\delta^k}\right]^s\right\}\notag\\
&> \frac{1}{3}
\left\{\left[\frac{\tau r_B}{\delta^k}\right]^s
+\left[\frac{d(x,x_\alpha^k)}{4\delta^k}\right]^s
+\left(\frac{\tau}{2}\right)^s\right\}.
\end{align}
Moreover, from $\delta^k\leq r_B$ and \eqref{e-t-r}, we deduce that 
$$
\frac{r_B+d(x,x_B)}{r_B}\lesssim\tau\frac{r_B+d(x,x_\alpha^k)}{r_B}
\lesssim \tau\left[1+\frac{d(x,x_\alpha^k)}{\delta^k}\right]. 
$$
Applying this, \eqref{e-i-b}, and \eqref{e-e-dec}, we infer that,   
for any $x\in \{x\in\mathcal{X}:d(x_B,x)>4\tau A_0r_B\}$, 
\begin{align*}
&\left[\frac{r_B+d(x,x_B)}{r_B}\right]^{\theta\omega}
\frac{1}{V_{\delta^k}(x_\alpha^k)}
E_k\left(x,x_\alpha^k;1\right)\\
&\quad\lesssim \tau^{\theta\omega}\left[1+
\frac{d(x,x_\alpha^k)}{\delta^k}\right]^{\theta\omega}
\frac{1}{V_{\delta^k}(x_\alpha^k)+V(x_\alpha^k,x)}
\left[\frac{\delta^k+d(x_\alpha^k,x)}{\delta^k}\right]^\omega\\
&\qquad\times \exp\left[-\frac{\nu}{3}\left(
\frac{\tau r_B}{\delta^k}\right)^s\right]
E_k\left(x,x_\alpha^k;3\cdot 4^s\right)
\exp\left(-\frac{\nu\tau^s}{3\cdot 2^s}\right)\\
&\quad\lesssim \exp\left[-\frac{\nu}{3}\left(
\frac{\tau r_B}{\delta^k}\right)^s\right] 
P_1\left(x,x_\alpha^k;\delta^k\right).
\end{align*}
This, together with Lemma \ref{l-ball2}, further implies that 
\begin{align}\label{e-out-2}
&\int_{\{x\in\mathcal{X}:d(x_B,x)>4\tau A_0r_B\}}
\left[\frac{r_B+d(x,x_B)}{r_B}\right]^{\theta\omega}
\frac{1}{V_{\delta^k}(x_\alpha^k)}
E_k\left(x,x_\alpha^k;1\right)\,d\mu(x)\notag\\
&\quad\lesssim \exp\left[-\frac{\nu}{3}\left(
\frac{\tau r_B}{\delta^k}\right)^s\right]\int_{\mathcal{X}}
P_1\left(x,x_\alpha^k;\delta^k\right)\,d\mu(x) 
\lesssim \exp\left[-\frac{\nu}{3}\left(
\frac{\tau r_B}{\delta^k}\right)^s\right]. 
\end{align}
Combining this and \eqref{e-out-1}, we finish the proof of (iii). 

We next prove (iv). By $r_B\in(1,\infty)$, \eqref{e-f-f}, 
\eqref{f-fb}, \eqref{e-v-b}, Lemma \ref{l-wave1}(i), and \eqref{e-out-2}, 
we have 
\begin{align*}
&\int_{\{x\in\mathcal{X}:d(x_B,x)>4\tau A_0r_B\}}
|f(x)|\left|\phi_\alpha^k(x)\right|\,d\mu(x)\notag\\
&\quad\lesssim [\mu(B)]^\theta
\sqrt{V_{\delta^k}(x_\alpha^k)}
\int_{\{x\in\mathcal{X}:d(x_B,x)>4\tau A_0r_B\}}
\left[\frac{r_B+d(x,x_B)}{r_B}\right]^{\theta\omega}\notag\\
&\qquad\times
\frac{1}{V_{\delta^k}(x_\alpha^k)}
E_k\left(x,x_\alpha^k;1\right)\,d\mu(x)\\
&\quad\lesssim [\mu(B)]^\theta
\sqrt{V_{\delta^k}(x_\alpha^k)}\exp\left[-\frac{\nu}{3}\left(
\frac{\tau r_B}{\delta^k}\right)^s\right],
\end{align*}
which completes the proof of (iv). 

Finally, in the proofs of (i) through (iv), we only use the size condition of 
$\phi_\alpha^k$, which $\psi_\beta^{k+1}$ also satisfies. Thus, 
repeating the arguments in the proofs of (i) through (iv), we show that   
items through (i) to (iv) 
still hold true if $\phi_\alpha^k$ and 
$x_\alpha^k$ are replaced, respectively, 
by $\psi_\beta^{k+1}$ and $x_\beta^{k+1}$.
This finishes the proof of (v) and hence of Lemma \ref{lip-wave}.
\end{proof}

Now, we show Proposition \ref{c-lip}.

\begin{proof}[Proof of Proposition \ref{c-lip}]
Without loss of generality, we assume that 
$\|f\|_{\mathrm{lip}_\theta(\mathcal{X})}=1$. 
To prove this proposition, 
fix $Q\in \mathcal{D}_0$.
Notice that, for any $Q\in \mathcal{D}_0$, there exist $k_0\in\mathbb{Z}_+$ 
and $\alpha_0\in\mathcal{A}_{k_0}$ such that $Q=Q_{\alpha_0}^{k_0}$.

We first show that 
\begin{equation}\label{c-lip-1}
\left\{\frac{1}{[\mu(Q_{\alpha_0}^{k_0})]^{1+2\theta}}\left[
\sum_{\{\alpha\in\mathcal{A}_0:Q_\alpha^0
\subset Q_{\alpha_0}^{k_0}\}}\left|
\left\langle f, \phi_\alpha^0\right\rangle\right|^2\right]\right\}^{\frac{1}{2}}
\lesssim1. 
\end{equation}
If $\{\alpha\in\mathcal{A}_0:Q_\alpha^0\subset 
Q_{\alpha_0}^{k_0}\}=\emptyset,$ 
then \eqref{c-lip-1} holds true. If
$\{\alpha\in\mathcal{A}_0:Q_\alpha^0
\subset Q_{\alpha_0}^{k_0}\}\neq\emptyset,$
then, by Lemma \ref{2-cube}(ii) and $k_0\in\mathbb{Z}_+$, 
we find that, for any  
$\tilde{\alpha}\in\{\alpha\in\mathcal{A}_0:Q_\alpha^0
\subset Q_{\alpha_0}^{k_0}\}$, either 
$Q_{\alpha_0}^{k_0}\subset Q_{\tilde{\alpha}}^0$ or 
$Q_{\tilde{\alpha}}^0\cap Q_{\alpha_0}^{k_0}=\emptyset$.
Thus, 
\begin{equation}\label{e-q-q}
Q_{\alpha_0}^{k_0}=Q_{\tilde{\alpha}}^0\quad
\text{and}\quad 
x_{\alpha_0}^{k_0}=x_{\tilde{\alpha}}^0.
\end{equation} 
From Lemma \ref{2-cube}(ii) agagin, we deduce 
that $\{\alpha\in\mathcal{A}_0:Q_\alpha^0\subset Q_{\alpha_0}^{k_0}\}$
has only one element $\tilde{\alpha}$.
To estimate $|\langle f, \phi_{\tilde{\alpha}}^0\rangle|$, let 
$B:=B(x_{\tilde{\alpha}}^0,2)$ and write 
\begin{align*}
\left|\left\langle f, \phi_{\tilde{\alpha}}^0\right\rangle\right|&=\left|
\int_{\mathcal{X}}f(x)\phi_{\tilde{\alpha}}^0(x)\,d\mu(x)\right|\\
&\leq
\int_{\mathcal{X}}|f(x)-f_B|\left|\phi_{\tilde{\alpha}}^0(x)\right|\,d\mu(x)
+|f_B|
\int_{\mathcal{X}}\left|\phi_{\tilde{\alpha}}^0(x)\right|\,d\mu(x)\\
&=: {\rm J_1}+{\rm J_2}.
\end{align*}
From Lemmas \ref{lip-wave}(i) and 
\ref{2-cube}(iii) and \eqref{e-q-q}, we deduce that 
\begin{align*}
{\rm J_1}&\lesssim
[\mu(B)]^\theta\sqrt{V_{1}(x_{\tilde{\alpha}}^0)}\left[1+\frac{1
+d(x_{\tilde{\alpha}}^0,x_{\tilde{\alpha}}^0)}{1}\right]^{\theta\omega}
 \lesssim
\left[\mu\left(Q_{\alpha_0}^{k_0}\right)\right]^{\theta+\frac{1}{2}}.
\end{align*}
For ${\rm J_2}$, notice that, $|f_B|\leq \|f\|_{L^\infty(B)}$. 
On the other hand, using Lemmas \ref{l-wave1}(i), \ref{l-ball}, 
\ref{2-cube}(iii), and \ref{l-ball2}, we infer that 
\begin{align*}
\int_{\mathcal{X}}\left|\phi_{\tilde{\alpha}}^0(x)\right|\,d\mu(x)
&\lesssim
\int_{\mathcal{X}}\frac {1}{\sqrt{V_{1}(x_{\tilde{\alpha}}^0)}}
E_0\left(x,x_{\tilde{\alpha}}^0;1\right)\,d\mu(x)\\
&\lesssim \left[V_{1}(x_{\tilde{\alpha}}^0)\right]^{\frac{1}{2}}
\int_{\mathcal{X}}\frac {V_{d(x,x_{\tilde{\alpha}}^0)
+1}(x_{\tilde{\alpha}}^0)}{V_{1}(x_{\tilde{\alpha}}^0)}\\
&\quad\times\frac{1}{V_{1}(x_{\tilde{\alpha}}^0)
+V(x,x_{\tilde{\alpha}}^0)}E_0\left(x,x_{\tilde{\alpha}}^0;1\right)\,d\mu(x)\\
&\lesssim \left[V_{1}(x_{\tilde{\alpha}}^0)\right]^{\frac{1}{2}}
\int_{\mathcal{X}} P_1\left(x,x_{\tilde{\alpha}}^0;1\right)\,d\mu(x)
\lesssim \left[V_{1}(x_{\tilde{\alpha}}^0)\right]^{\frac{1}{2}},
\end{align*}
where $E_0$ is as in \eqref{def-e-decay} with $k=0$ 
and $P_1$ is as in \eqref{decay} with $\gamma=1$. 
This, together with Lemma \ref{2-cube}(iii), Definition \ref{d-lip}, 
\eqref{eq-doub}, and \eqref{e-q-q}, further implies that 
$$
{\rm J_2}\lesssim \|f\|_{L^\infty(B)}
\left[V_{1}(x_{\tilde{\alpha}}^0)\right]^{\frac{1}{2}}
\lesssim
\left[\mu\left(Q_{\tilde{\alpha}}^0\right)\right]^{\theta+\frac{1}{2}}.
$$
Combining the estimates of ${\rm J_1}$ and ${\rm J_2}$, we conclude that 
\begin{align*}
&\left\{\frac{1}{[\mu(Q_{\alpha_0}^{k_0})]^{1+2\theta}}\left[
\sum_{\{\alpha\in\mathcal{A}_0:Q_\alpha^0
\subset Q_{\alpha_0}^{k_0}\}}\left|
\left\langle f, \phi_\alpha^0\right\rangle\right|^2
\right]\right\}^{\frac{1}{2}}\notag\\
&\quad=\left\{\frac{1}{[\mu(Q_{\alpha_0}^{k_0})]^{1+2\theta}}\left|
\left\langle f, \phi_{\tilde{\alpha}}^0\right\rangle\right|^2\right\}^{\frac{1}{2}}
\lesssim 1. 
\end{align*}
This finishes the proof of \eqref{c-lip-1}. 

Next, we show that 
\begin{equation}\label{c-lip-2}
\left\{\frac{1}{[\mu(Q_{\alpha_0}^{k_0})]^{1+2\theta}}\left[
\sum_{k=0}^\infty\sum_{\{\beta\in\mathcal{G}_k:
Q_\beta^{k+1}\subset Q_{\alpha_0}^{k_0}\}}\left|\left\langle f, 
\psi_\beta^{k+1}\right\rangle\right|^2\right]\right\}^{\frac{1}{2}}
\lesssim 1. 
\end{equation}
For this purpose, write 
\begin{align*}
&\left\{\frac{1}{[\mu(Q_{\alpha_0}^{k_0})]^{1+2\theta}}\left[
\sum_{k=0}^\infty\sum_{\{\beta\in\mathcal{G}_k:
Q_\beta^{k+1}\subset Q_{\alpha_0}^{k_0}\}}\left|\left\langle f, 
\psi_\beta^{k+1}\right\rangle\right|^2\right]\right\}^{\frac{1}{2}}\\
&\quad\lesssim\left\{\frac{1}{[\mu(Q_{\alpha_0}^{k_0})]^{1+2\theta}}\left[
\sum_{k=0}^{k_0-1}\sum_{\{\beta\in\mathcal{G}_k:
Q_\beta^{k+1}\subset Q_{\alpha_0}^{k_0}\}}\left|\left\langle f, 
\psi_\beta^{k+1}\right\rangle\right|^2\right]\right\}^{\frac{1}{2}}\\
&\qquad+\left\{\frac{1}{[\mu(Q_{\alpha_0}^{k_0})]^{1+2\theta}}\left[
\sum_{k=k_0}^\infty\sum_{\{\beta\in\mathcal{G}_k:
Q_\beta^{k+1}\subset Q_{\alpha_0}^{k_0}\}}\left|\left\langle f, 
\psi_\beta^{k+1}\right\rangle\right|^2\right]\right\}^{\frac{1}{2}}\\
&\quad=:{\rm J_3}+{\rm J_4}.
\end{align*}
We now estimate ${\rm J_3}$. Let, if $k_0>0$, 
$$
E:=\left\{(k,\beta):\ k\in\{0,\cdots, k_0-1\}, \beta\in\mathcal{G}_k,
Q_\beta^{k+1}\subset Q_{\alpha_0}^{k_0}\right\}
$$
and, if $k_0=0$, $E:=\emptyset$. If $E=\emptyset$, then ${\rm J_3}=0$.
If $E\neq\emptyset$, we claim that $E$ has only one element. 
Indeed, assume $(k_1,\beta_1),(k_2,\beta_2)\in E$. 
From Lemma \ref{2-cube}(ii) and the definition of $E$, 
we deduce that 
\begin{equation}\label{e-q-p}
Q_{\beta_1}^{k_1+1}=Q_{\alpha_0}^{k_0}=
Q_{\beta_2}^{k_2+1}\quad\text{and}\quad x_{\beta_1}^{k_1+1}
=x_{\alpha_0}^{k_0}=x_{\beta_2}^{k_2+1}.
\end{equation}
Notice that $x_{\beta_1}^{k_1+1}\in \mathcal{Y}^{k_1}$ 
and $x_{\beta_2}^{k_2+1}\in \mathcal{Y}^{k_2}$. By the 
definition of $\mathcal{Y}^{k}$,
we find that, if $k_1\neq k_2$, then 
$\mathcal{Y}^{k_1}\cap\mathcal{Y}^{k_2}=\emptyset$ 
and hence $x_{\beta_1}^{k_1+1}
\neq x_{\beta_2}^{k_2+1}$.  
Therefore, $k_1=k_2$ and $\beta_1=\beta_2$, 
which completes the proof of 
the above claim.  Denote the only element in $E$ by 
$(\tilde{k},\tilde{\beta})$. Then, by \eqref{e-q-p}, we have  
$$
{\rm J_3}=\frac{1}{[\mu(Q_{\alpha_0}^{k_0})]^{\frac{1}{2}+\theta}}
\left|\left\langle f, 
\psi_{\tilde{\beta}}^{\tilde{k}+1}\right\rangle\right|
=\frac{1}{[\mu(Q_{\tilde{\beta}}^{\tilde{k}
+1})]^{\frac{1}{2}+\theta}}\left|\left\langle f, 
\psi_{\tilde{\beta}}^{\tilde{k}+1}\right\rangle\right|.
$$
Let $B_1:=(x_{\tilde{\beta}}^{\tilde{k}+1},\delta^{\tilde{k}+1})$. 
Then, using Lemma \ref{l-wave2}(iii), (i) and (v) of 
Lemma \ref{lip-wave}, and \eqref{e-q-p}, we infer that  
\begin{align*}
{\rm J_3}&=\frac{1}{[\mu(Q_{\tilde{\beta}}^{\tilde{k}
+1})]^{\frac{1}{2}+\theta}}\left|\left\langle f-f_{B_1}, 
\psi_{\tilde{\beta}}^{\tilde{k}+1}\right\rangle\right|\\
&\lesssim \frac{1}{[\mu(Q_{\tilde{\beta}}^{\tilde{k}
+1})]^{\frac{1}{2}+\theta}} [\mu(B_1)]^\theta
\sqrt{V_{\delta^{\tilde{k}}}(x_{\tilde{\beta}}^{\tilde{k}
+1})}\\
&\lesssim 1.
\end{align*}

To estimate ${\rm J_4}$, let $B_2:=B(x_{\alpha_0}^{k_0}, 
2C_\#\delta^{k_0})$ with 
$C_\#$ as the same in Lemma \ref{2-cube}(iii).  
By Lemma \ref{l-wave2}(iii), we have 
\begin{align*}
&\sum_{k=k_0}^\infty\sum_{\{\beta\in\mathcal{G}_k:
Q_\beta^{k+1}\subset Q_{\alpha_0}^{k_0}\}}\left|\left\langle f, 
\psi_\beta^{k+1}\right\rangle\right|^2\\
&\lesssim
\sum_{k=k_0}^\infty\sum_{\{\beta\in\mathcal{G}_k:
Q_\beta^{k+1}\subset Q_{\alpha_0}^{k_0}\}}\left|\left\langle 
[f-f_{B_2}]\mathbf{1}_{4A_0 B_2}, 
\psi_\beta^{k+1}\right\rangle\right|^2\\
&\quad+\sum_{k=k_0}^\infty\sum_{\{\beta\in\mathcal{G}_k:
Q_\beta^{k+1}\subset Q_{\alpha_0}^{k_0}\}}
\left|\left\langle [f-f_{B_2}]\mathbf{1}_{\mathcal{X}\setminus(4A_0B_2)}, 
\psi_\beta^{k+1}\right\rangle\right|^2\\
&\quad=:{\rm J_{4,1}}+{\rm J_{4,2}}.
\end{align*}
For ${\rm J_{4,1}}$, notice that, by Definition \ref{d-lip}, 
we have $[f-f_{B_2}]\mathbf{1}_{4A_0 B_2}\in L^2(\mathcal{X})$. 
From this, Lemma~\ref{l-wave2}, and \eqref{e-p2}, we deduce that 
\begin{align*}
{\rm J_{4,1}}&\leq\left\|[f-f_{B_2}]\mathbf{1}_{4A_0 B_2}
\right\|_{L^2(\mathcal{X})}^2\\
&\leq \int_{4A_0 B_2}\left[\frac{1}{\mu(B_2)}\int_{B_2}
|f(x)-f(y)|\,d\mu(y)\right]^2\,d\mu(x)\\
&\lesssim 
\int_{4A_0 B_2}\left[\frac{1}{\mu(B_2)}\int_{B_2}
[V(x,y)]^\theta\,d\mu(y)\right]^2\,d\mu(x)\\
&\lesssim [\mu(B_2)]^{1+2\theta}.
\end{align*}
For ${\rm J_{4,2}}$, since $k\geq k_0$ and 
$Q_\beta^{k+1}\subset Q_{\alpha_0}^{k_0}$, 
it follows that 
$$
\delta^{k+1}<C_\#\delta^{k_0}
\quad\text{and}\quad d(x_\beta^{k+1}, 
x_{\alpha_0}^{k_0})\leq C_\#\delta^{k_0}.
$$
By this, (iii) and (v) of Lemma \ref{lip-wave}, 
and Lemma \ref{2-cube}(ii), we obtain  
\begin{align*}
{\rm J_{4,2}}&\lesssim \sum_{k=k_0}^\infty
\sum_{\{\beta\in\mathcal{G}_k:
Q_\beta^{k+1}\subset Q_{\alpha_0}^{k_0}\}} 
[\mu(B_2)]^{2\theta}V_{\delta^k}(x_\beta^{k+1})
\exp\left[-\frac{2\nu}{3}
\left(\frac{C_\#\delta^{k_0}}{\delta^k}\right)^s\right]\\
&\sim[\mu(B_2)]^{2\theta}\sum_{k=k_0}^\infty
\exp\left[-\frac{2\nu}{3}\left(\frac{C_\#\delta^{k_0}}{\delta^k}\right)^s
\right]\sum_{\{\beta\in\mathcal{G}_k:
Q_\beta^{k+1}\subset Q_{\alpha_0}^{k_0}\}} \mu(Q_\beta^{k+1})\\
&\lesssim [\mu(B_2)]^{2\theta+1},
\end{align*}
which, together with the estimate of ${\rm J_{4,1}}$, 
further implies that 
$$
{\rm J_4}\lesssim 1.
$$
Combining the estimates of ${\rm J_3}$ and ${\rm J_4}$, 
we find that \eqref{c-lip-2} holds true, which completes 
the proof of Proposition \ref{c-lip}. 
\end{proof}

\begin{proposition}\label{lip-c}
Let $\eta\in(0,1]$ as the same in Lemma \ref{l-wave1}, 
$\theta\in(0,\frac{\eta}{\omega})$. If a sequence 
$$
c:=\left\{c_\alpha^0\right\}_{\alpha\in\mathcal{A}_0}\cup
\left\{c_\beta^{k+1}\right\}_{k\in\mathbb{Z}_+, \beta\in\mathcal{G}_k}
\subset \mathbb{C}
$$ 
satisfies that 
\begin{align}\label{lip-c-c}
\|c\|_\ast&:= \sup_{Q\in \mathcal{D}_0}
\left\{\frac{1}{[\mu(Q)]^{1+2\theta}}\left[
\sum_{\{\alpha\in\mathcal{A}_0:Q_\alpha^0\subset Q\}}\left|
c_\alpha^0\right|^2+
\sum_{k=0}^\infty\sum_{\{\beta\in\mathcal{G}_k:
Q_\beta^{k+1}\subset Q\}}\left| 
c_\beta^{k+1}\right|^2\right]\right\}^{\frac{1}{2}}\notag\\
&<\infty,
\end{align}
then 
$$
\sum_{\alpha\in\mathcal{A}_0} c_\alpha^0
\phi_\alpha^0+\sum_{k=0}^\infty\sum_{\beta\in\mathcal{G}_k} 
c_\beta^{k+1}\psi_\beta^{k+1}
$$
converges in $L_{\mathcal{B}}^2(\mathcal{X})$. 
Denote the limit by $f$. Then there exists a positive 
constant $C$, independent of $c$, such that 
$$
\|f\|_{\mathrm{lip}_\theta(\mathcal{X})}\leq C\|c\|_\ast.
$$ 
\end{proposition}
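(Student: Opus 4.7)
The strategy is to apply the Carleson condition to individual dyadic cubes to bound single coefficients, combine these bounds with the exponential size and regularity estimates of the wavelets, and sum via annulus-based Cauchy--Schwarz arguments. The proof proceeds in three steps: pointwise absolute convergence together with a pointwise bound on $f$, the $r_B\in(1,\infty)$ case of the Lipschitz norm, and the H\"older case $r_B\in(0,1]$; convergence in $L^2_{\mathcal{B}}$ then follows by dominated convergence.

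Applying \eqref{lip-c-c} with $Q=Q_\alpha^0$ or $Q=Q_\beta^{k+1}$ yields the single-coefficient bounds $|c_\alpha^0|\leq\|c\|_\ast[\mu(Q_\alpha^0)]^{1/2+\theta}$ and $|c_\beta^{k+1}|\leq\|c\|_\ast[\mu(Q_\beta^{k+1})]^{1/2+\theta}$. For a fixed $x\in\mathcal{X}$, I combine these with $\mu(Q_\alpha^0)\sim V_1(x_\alpha^0)$, $\mu(Q_\beta^{k+1})\sim V_{\delta^{k+1}}(x_\beta^{k+1})$, the size estimates in Lemmas \ref{l-wave1}(i) and \ref{l-wave2}(i), and Lemma \ref{l-ball2} to compare ball measures at different centers. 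Grouping the $\alpha$'s and $\beta$'s into annuli around $x$ of radii $2^j$ and $2^j\delta^k$, the polynomial volume growth from Lemma \ref{l-ball2} is defeated by the exponential factor $E_k$, and a geometric summation in $(k,j)$ produces pointwise absolute convergence and a bound $|f(x)|\lesssim\|c\|_\ast[V_1(x)]^\theta$. Dominated convergence then gives $L^2_{\mathcal{B}}$ convergence of the partial sums and shows $f$ is continuous. For $r_B\in(1,\infty)$, this pointwise bound together with $V_1(x)\leq V_{r_B}(x)\sim\mu(B)$ for $x\in B$ (from Lemma \ref{l-ball}(i)) gives $\|f\|_{L^\infty(B)}/[\mu(B)]^\theta\lesssim\|c\|_\ast$.

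For the H\"older case $r_B\in(0,1]$, fix $x,y\in B$. The sub-case $d(x,y)\geq 1$ is absorbed into the pointwise bound via $V_1(x)+V_1(y)\lesssim V(x,y)\lesssim\mu(B)$. Otherwise choose $k_1\in\mathbb{Z}_+$ with $\delta^{k_1+1}<d(x,y)\leq\delta^{k_1}$ and split
\begin{align*}
f(x)-f(y)&=\sum_\alpha c_\alpha^0\bigl[\phi_\alpha^0(x)-\phi_\alpha^0(y)\bigr]+\sum_{k=0}^{k_1-1}\sum_\beta c_\beta^{k+1}\bigl[\psi_\beta^{k+1}(x)-\psi_\beta^{k+1}(y)\bigr]\\
&\quad+\sum_{k=k_1}^\infty\sum_\beta c_\beta^{k+1}\bigl[\psi_\beta^{k+1}(x)-\psi_\beta^{k+1}(y)\bigr].
\end{align*}
The first two pieces are estimated by the regularity in Lemmas \ref{l-wave1}(ii) and \ref{l-wave2}(ii), each yielding a factor $[d(x,y)/\delta^k]^\eta$; the third piece is estimated by the triangle inequality and the size estimates at $x$ and $y$ separately. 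At each scale $k$, the indices are grouped into annuli $A_{k,j}:=\{\beta:2^{j-1}\delta^k\leq d(x,x_\beta^{k+1})<2^j\delta^k\}$, Cauchy--Schwarz is applied, and the coefficient sum is controlled by
\[
\sum_{\beta\in A_{k,j}}|c_\beta^{k+1}|^2\leq C\|c\|_\ast^2[V_{2^j\delta^k}(x)]^{1+2\theta}
\]
via the Carleson condition applied to a cube in $\mathcal{D}_0$ of size comparable to $2^j\delta^k$ (or, when $2^j\delta^k>1$, via the finite family of scale-$0$ cubes covering $B(x,2^j\delta^k)$, which partition $\mathcal{X}$ by Lemma \ref{2-cube}(i)). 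The hypothesis $\theta<\eta/\omega$ then produces a convergent geometric series in $k$ (with ratio $\delta^{\eta-\omega\theta}$ for the regularity piece and $\delta^{\omega\theta}$ for the size piece), whose sum is dominated by $\|c\|_\ast[V_{d(x,y)}(x)]^\theta\lesssim\|c\|_\ast[\mu(B)]^\theta$.

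The main obstacle will be the careful bookkeeping in the H\"older step: identifying, for each pair $(k,j)$, the correct enclosing dyadic cube in $\mathcal{D}_0$ so that the Carleson condition gives the sharp bound $\|c\|_\ast^2[V_{2^j\delta^k}(x)]^{1+2\theta}$ (with the transition regime $2^j\delta^k\sim 1$ requiring a separate covering argument using the partition property of Lemma \ref{2-cube}(i) and a measure-summation argument), and verifying that the resulting double series in $(k,j)$ telescopes to the claimed H\"older bound; the constraint $\theta<\eta/\omega$ is precisely what forces both geometric ratios to be strictly less than $1$.
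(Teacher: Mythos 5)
Your overall strategy differs from the paper's: you aim to prove the pointwise H\"older estimate $|f(x)-f(y)|\lesssim\|c\|_\ast[\mu(B)]^\theta$ directly, whereas the paper only establishes $L^2$-oscillation (Campanato-type) bounds $\|f\|_{L^2(B)}\lesssim\|c\|_\ast[\mu(B)]^{\theta+1/2}$ for $r_B>1$ and $\|f-c_B\|_{L^2(B)}\lesssim\|c\|_\ast[\mu(B)]^{\theta+1/2}$ for $r_B\le 1$, and then invokes \cite[Corollary 7.5]{hyy19} to upgrade these to membership in $\mathrm{lip}_\theta(\mathcal{X})$. This is not a cosmetic difference, and your route contains a genuine gap in the fine-scale part. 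Your single-coefficient bound $|c_\beta^{k+1}|\le\|c\|_\ast[\mu(Q_\beta^{k+1})]^{1/2+\theta}$ combined with the size estimate gives, for the terms at scale $k$ whose centers lie within $O(\delta^{k_1})$ of $x$, a contribution of order $\|c\|_\ast[V_{\delta^k}(x)]^{\theta}$ per scale (and similarly your annulus/Cauchy--Schwarz bound yields $\|c\|_\ast[V_{2^j\delta^k}(x)]^{1/2+\theta}[V_{\delta^k}(x)]^{-1/2}e^{-c2^{js}}$, whose $j$-sum is again $\gtrsim\|c\|_\ast[V_{\delta^k}(x)]^\theta$). Your claimed ``geometric series in $k$ with ratio $\delta^{\omega\theta}$ for the size piece'' implicitly uses $V_{\delta^k}(x)\lesssim\delta^{k\omega}$, i.e.\ a reverse-doubling/lower Ahlfors bound. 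The paper explicitly does \emph{not} assume reverse doubling (this is advertised as a main point of the result), and on a general doubling space $\sum_k[V_{\delta^k}(x)]^\theta$ can diverge: e.g.\ a doubling measure with $V_r(x)\sim(\log(1/r))^{-1}$ near some $x$ defeats the summation. Worse, one can arrange Carleson sequences for which $\sum_{k,\beta}|c_\beta^{k+1}||\psi_\beta^{k+1}(x)|$ diverges at such a point, so the pointwise absolute convergence you assert in your first step — and hence also your ``dominated convergence'' derivation of $L^2_{\mathcal{B}}$ convergence and your treatment of the third piece in the H\"older split — cannot be established by size estimates alone.

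The mechanism the paper uses to handle exactly this part is orthonormality: for the fine-scale terms whose cubes sit inside a fixed enclosing cube $Q$ of scale comparable to $r_B$, it bounds $\|\sum c_\beta^{k+1}\psi_\beta^{k+1}\|_{L^2(\mathcal{X})}$ by $(\sum|c_\beta^{k+1}|^2)^{1/2}\le\|c\|_\ast[\mu(Q)]^{1/2+\theta}$, which requires no summability in $k$ of $[V_{\delta^k}(x)]^{\pm\theta}$; the far terms are then handled pointwise by exponential decay much as you propose, and the coarse scales by the regularity estimate (that part of your argument, with ratio $\delta^{\eta-\omega\theta}$, is sound). To repair your proof you would either need to add the reverse doubling hypothesis (defeating the purpose of the theorem), or replace the pointwise treatment of the near fine-scale sum by an $L^2(B)$ estimate via orthonormality and then cite (or reprove) the equivalence between the $L^2$-Campanato seminorm and $\|\cdot\|_{\mathrm{lip}_\theta(\mathcal{X})}$, which is precisely the paper's final step.
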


To prove Proposition \ref{lip-c}, we need several lemmas. 
We first recall the following very useful inequality.

\begin{lemma}\label{l-p}
For any $\theta\in (0,1]$ and $\{a_j\}_{j\in\mathbb{N}}
 \subset \mathbb{C}$,
it holds true that
\begin{equation*}
\left(\sum_{j=1}^\infty|a_j|\right)^\theta\leq
\sum_{j=1}^\infty|a_j|^\theta.
\end{equation*}
\end{lemma}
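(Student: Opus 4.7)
The statement is the classical subadditivity inequality for the map $x \mapsto x^\theta$ with $\theta \in (0,1]$. The plan is to reduce to a one-line normalization argument on finite sums and then pass to infinite sums by monotone limits.

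First, since only $|a_j|$ appears, I may assume $a_j \geq 0$. The crux is the following elementary observation: for any $x \in [0,1]$ and $\theta \in (0,1]$, one has $x^\theta \geq x$, because $x^{\theta - 1} \geq 1$ on $(0,1]$ (the exponent is nonpositive). I would record this as the only analytic input.

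Next, I treat finite sums. Fix $N \in \mathbb{N}$ and set $S_N := \sum_{j=1}^N a_j$. If $S_N = 0$, the inequality is trivial. Otherwise each ratio $a_j / S_N$ lies in $[0,1]$, so the observation above gives
$$
\left(\frac{a_j}{S_N}\right)^\theta \geq \frac{a_j}{S_N}.
$$
Summing over $j = 1, \dots, N$ yields $S_N^{-\theta} \sum_{j=1}^N a_j^\theta \geq S_N^{-1} \sum_{j=1}^N a_j = 1$, which rearranges to $S_N^\theta \leq \sum_{j=1}^N a_j^\theta$. This is exactly the claim for finite sums.

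Finally, I pass to the infinite sum by monotonicity. For every $N$,
$$
\left(\sum_{j=1}^N a_j\right)^\theta \leq \sum_{j=1}^N a_j^\theta \leq \sum_{j=1}^\infty a_j^\theta.
$$
Letting $N \to \infty$ and using continuity of $t \mapsto t^\theta$ on $[0,\infty]$ (with the convention $\infty^\theta = \infty$) gives $\bigl(\sum_{j=1}^\infty a_j\bigr)^\theta \leq \sum_{j=1}^\infty a_j^\theta$, as desired. There is no real obstacle here; the only thing to be careful about is the edge cases $S_N = 0$ and $\sum_j a_j = \infty$, both of which are handled trivially (the latter forces the right-hand side to be infinite as well, by the finite-sum bound applied to partial sums).
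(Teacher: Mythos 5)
Your proof is correct and complete: the reduction to finite sums via the normalization $a_j/S_N\in[0,1]$ and the elementary fact $x^\theta\geq x$ on $[0,1]$ is the standard argument for this subadditivity inequality, and the passage to the infinite sum by monotonicity is handled properly, including the degenerate cases. The paper itself states Lemma \ref{l-p} without any proof (it merely ``recalls'' the inequality), so there is nothing to compare against; your argument would serve as a valid proof of the omitted lemma.
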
 

The following lemma comes from \cite[Lemma 2.21]{lyy},
 whose proof is still valid if $d$ only satisfies \eqref{tri-in}. 
 We omit details here.

\begin{lemma}\label{sum-e}
There exists a positive constant $C$ such that, for any 
$b,c\in(0,\infty)$, $k\in\mathbb{Z}$, and $x\in\mathcal{X}$, 
$$
\sum_{\alpha\in\mathcal{A}_k}\exp\left(-b
\left[\frac{d(x_\alpha^k,x)}{\delta^k}\right]^c\right)\leq C. 
$$
\end{lemma}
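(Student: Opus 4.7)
The plan is to control the sum by an annular decomposition indexed by the distance to $x$, with the cardinality of each annulus bounded via the separation property \eqref{dis-re} of the dyadic points combined with the doubling condition \eqref{eq-doub}.

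First I would fix $k\in\mathbb{Z}$ and $x\in\mathcal{X}$, and decompose $\mathcal{A}_k$ into annular sets
\begin{align*}
A_0&:=\left\{\alpha\in\mathcal{A}_k:\ d(x_\alpha^k,x)<\delta^k\right\},\\
A_j&:=\left\{\alpha\in\mathcal{A}_k:\ 2^{j-1}\delta^k\leq d(x_\alpha^k,x)<2^j\delta^k\right\},\quad j\geq 1.
\end{align*}
The key observation is that, by \eqref{dis-re}, any two distinct dyadic points satisfy $d(x_\alpha^k,x_\beta^k)\geq c_0\delta^k$, so the quasi-triangle inequality \eqref{tri-in} forces the balls
$$
\left\{B\left(x_\alpha^k,\tfrac{c_0\delta^k}{2A_0}\right)\right\}_{\alpha\in\mathcal{A}_k}
$$
to be pairwise disjoint.

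Next, I would estimate $\#A_j$. For any $\alpha\in A_j$ with $j\geq 1$, the disjoint ball $B(x_\alpha^k,c_0\delta^k/(2A_0))$ lies in $B(x,A_0[2^j\delta^k+c_0\delta^k/(2A_0)])$, whose measure is $\lesssim 2^{j\omega}\mu(B(x,\delta^k))$ by \eqref{eq-doub}. On the other hand, Lemma \ref{l-ball2} (applied with $r_1=c_0\delta^k/(2A_0)$, $r_2=\delta^k$, and points $x_\alpha^k,x$, since $d(x,x_\alpha^k)<2^j\delta^k$) yields
$$
\mu\left(B\left(x_\alpha^k,\tfrac{c_0\delta^k}{2A_0}\right)\right)\gtrsim 2^{-j\omega}\mu(B(x,\delta^k)).
$$
Summing over $\alpha\in A_j$, disjointness gives $\#A_j\lesssim 2^{2j\omega}$; the case $j=0$ is handled identically and produces $\#A_0\lesssim 1$.

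Finally, on each $A_j$ with $j\geq 1$ the integrand is bounded by $\exp(-b\,2^{c(j-1)})$, and on $A_0$ simply by $1$. Thus
$$
\sum_{\alpha\in\mathcal{A}_k}\exp\left(-b\left[\frac{d(x_\alpha^k,x)}{\delta^k}\right]^c\right)
\lesssim 1+\sum_{j=1}^\infty 2^{2j\omega}\exp\!\left(-b\,2^{c(j-1)}\right),
$$
which converges to a finite constant $C$ independent of $k$ and $x$ (the constant depending on $b$, $c$, $\omega$, $A_0$, $c_0$, as is evidently needed for convergence). There is no serious obstacle here; the only point requiring care is the bookkeeping of the quasi-metric constant $A_0$ when applying \eqref{tri-in} to establish both the disjointness of the small balls and their containment in the enclosing ball, together with the correct use of Lemma \ref{l-ball2} to produce the uniform lower bound on their measures.
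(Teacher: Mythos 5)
Your proof is correct and essentially complete. Note that the paper does not actually prove this lemma: it cites \cite[Lemma 2.21]{lyy} and merely remarks that the argument there survives when $d$ only satisfies \eqref{tri-in}. The annular decomposition you use --- separation of the dyadic points from \eqref{dis-re} giving pairwise disjoint balls of radius $c_0\delta^k/(2A_0)$, a two-sided volume comparison giving $\#A_j\lesssim 2^{2j\omega}$, and then summability of $\sum_{j\geq1} 2^{2j\omega}\exp(-b\,2^{c(j-1)})$ --- is the standard argument and is precisely what the citation refers to, so there is no methodological divergence to report; you have simply supplied the details the paper omits. Two small remarks. First, your parenthetical application of Lemma \ref{l-ball2} has the roles of the two balls swapped: with $r_1=c_0\delta^k/(2A_0)$ centered at $x_\alpha^k$ and $r_2=\delta^k$ centered at $x$, the lemma yields the \emph{upper} bound $\mu(B(x_\alpha^k,c_0\delta^k/(2A_0)))\lesssim 2^{j\omega}\mu(B(x,\delta^k))$, not the lower bound you need. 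To get the inequality you actually state, apply the lemma with $r_1=\delta^k$ at $x$ and $r_2=c_0\delta^k/(2A_0)$ at $x_\alpha^k$ (the hypothesis $r_1+d(x,x_\alpha^k)\geq r_2$ is harmless; shrink the separation radius if necessary), which gives $\mu(B(x,\delta^k))/\mu(B(x_\alpha^k,c_0\delta^k/(2A_0)))\lesssim 2^{j\omega}$, i.e.\ exactly your bound. This is a bookkeeping slip, not a gap. Second, you are right that the constant must depend on $b$ and $c$: as literally written in the lemma (``for any $b,c\in(0,\infty)$'' with a single $C$) the statement cannot hold, since the left-hand side tends to $\#\mathcal{A}_k=\infty$ as $b\to0$; in all of the paper's applications $b$ and $c$ are fixed, so your reading is the intended one.
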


Now, we show Proposition \ref{lip-c}.

\begin{proof}[Proof of Proposition \ref{lip-c}]
Let $B:=(x_B,r_B)\subset \mathcal{X}$, we consider two cases on $r_B$. 

Case 1. $r_B\in(1,\infty)$. In this case, write 
\begin{align*}
&\sum_{\alpha\in\mathcal{A}_0} c_\alpha^0\phi_\alpha^0
+\sum_{k=0}^\infty\sum_{\beta\in\mathcal{G}_k} 
c_\beta^{k+1}\psi_\beta^{k+1}\\
&\quad=\sum_{\{\alpha\in\mathcal{A}_0:d(x_\alpha^0,
x_B)<2A_0r_B\}} c_\alpha^0\phi_\alpha^0+ \sum_{\{\alpha\in\mathcal{A}_0:
d(x_\alpha^0,x_B)\geq2A_0r_B\}} c_\alpha^0\phi_\alpha^0\\
&\qquad+\sum_{k=0}^\infty\sum_{\{\beta\in\mathcal{G}_k:
d(x_\beta^{k+1},x_B)<2A_0r_B\}} c_\beta^{k+1}\psi_\beta^{k+1}\\
&\qquad+\sum_{k=0}^\infty\sum_{\{\beta\in\mathcal{G}_k:d(x_\beta^{k+1},
x_B)\geq2A_0r_B\}} c_\beta^{k+1}\psi_\beta^{k+1}\\
&\quad=:F_{1,1}+F_{1,2}+F_{1,3}+F_{1,4}.
\end{align*}
For $F_{1,1}$, by \eqref{tri-in} and Lemma \ref{2-cube}(iii), 
we find that, for any $\alpha\in\mathcal{A}_0$ such that 
$d(x_\alpha^0,x_B)<2A_0r_B$ and any $y\in Q_\alpha^0$,
$$
d(y,x_B)\leq A_0\left[d(y,x_\alpha^0)+d(x_\alpha^0,x_B)\right]
\leq A_0(C_\#+2A_0)r_B,
$$
which further implies that 
$Q_\alpha^0\subset B(x_B,A_0(C_\#+2A_0)r_B)$ and hence 
$$
\left\{\alpha\in\mathcal{A}_0:d(x_\alpha^0,x_B)
<2A_0r_B\right\}\subset 
\left\{\alpha\in\mathcal{A}_0:Q_\alpha^0\subset B(x_B,A_0(C_\#+2A_0)r_B)\right\}.
$$
Using this, Lemma \ref{l-wave1}, \eqref{lip-c-c}, and 
Lemmas \ref{l-p} and \ref{2-cube}(ii), we infer that 
\begin{align*}
\|F_{1,1}\|_{L^2(\mathcal{X})}&\leq 
\left[\sum_{\{\alpha\in\mathcal{A}_0:Q_\alpha^0\subset 
B(x_B,A_0(C_\#+2A_0)r_B)\}} 
\left|c_\alpha^0\right|^2\right]^{\frac{1}{2}}\\
&\leq \|c\|_\ast\left(\sum_{\{\alpha\in\mathcal{A}_0:Q_\alpha^0
\subset B(x_B,A_0(C_\#+2A_0)r_B)\}} \left[\mu(
Q_\alpha^0)\right]^{2\theta+1}\right)^{\frac{1}{2}}\\
&\leq \|c\|_\ast\left[\sum_{\{\alpha\in\mathcal{A}_0:
Q_\alpha^0\subset B(x_B,A_0(C_\#+2A_0)r_B)\}} 
\mu(Q_\alpha^0)\right]^{\theta+\frac{1}{2}}\\
&\lesssim \|c\|_\ast[\mu(B)]^{\theta+\frac{1}{2}}.
\end{align*}
For $F_{1,2}$, by \eqref{lip-c-c} and Lemma \ref{l-wave1}(i), 
we have, for any $x\in B$,  
\begin{align*}
|F_{1,2}(x)|&\leq  \sum_{\{\alpha\in\mathcal{A}_0:
d(x_\alpha^0,x_B)\geq2A_0r_B\}} \left|c_\alpha^0
\right|\left|\phi_\alpha^0(x)\right|\\
&\lesssim \|c\|_\ast \sum_{\{\alpha\in\mathcal{A}_0:
d(x_\alpha^0,x_B)\geq2A_0r_B\}} \left[\mu(Q_\alpha^0
)\right]^\theta E_0\left(x_\alpha^0,x;1\right),
\end{align*}
where $E_0$ is as in \eqref{def-e-decay} with $k=0$. 
Observe that, since $d(x_\alpha^0,x_B)\geq2A_0r_B$, 
it follows that, for any $x\in B$,
\begin{equation}\label{e-a-b}
2A_0r_B\leq d(x_\alpha^0,x_B)\leq A_0\left[d(x_\alpha^0,x)
+d(x,x_B)\right]< A_0d(x_\alpha^0,x)+A_0r_B.
\end{equation}
Thus, 
$d(x,x_B)<r_B<d(x_\alpha^0,x),$
which, together with \eqref{e-a-b}, further implies that 
$$
d(x_\alpha^0,x_B)< 2A_0d(x_\alpha^0,x).
$$
From this and Lemmas \ref{l-ball2} and \ref{sum-e}, 
we deduce that, for any $x\in B$, 
\begin{align*}
|F_{1,2}(x)|&\lesssim \|c\|_\ast \sum_{\{\alpha\in
\mathcal{A}_0:d(x_\alpha^0,x_B)\geq2A_0r_B\}} 
\left[\mu(Q_\alpha^0)\right]^\theta 
E_0\left(x_\alpha^0,x_B;2^sA_0^s\right)\\
&\lesssim \|c\|_\ast [\mu(B)]^\theta \sum_{\{\alpha\in
\mathcal{A}_0:d(x_\alpha^0,x_B)\geq2A_0r_B\}} 
\left[1+d(x_\alpha^0,x_B)\right]^{\theta\omega}
E_0\left(x_\alpha^0,x_B;2^sA_0^s\right)\\
&\lesssim \|c\|_\ast [\mu(B)]^\theta 
\sum_{\{\alpha\in\mathcal{A}_0:d(x_\alpha^0,x_B)\geq2A_0r_B\}} 
E_0\left(x_\alpha^0,x_B;2^{s+1}A_0^s\right)\\
&\lesssim\|c\|_\ast [\mu(B)]^\theta
\end{align*}
and hence
$$
\|F_{1,2}\|_{L^2(B)}\lesssim \|c\|_\ast[\mu(B)]^{\theta+\frac{1}{2}}.
$$

Next, we estimate $F_{1,3}$. Applying \eqref{tri-in} and 
Lemma \ref{2-cube}(iii), we infer that, for any 
$k\in \mathbb{Z}_+$, $\beta\in\mathcal{G}_k$ such 
that $d(x_\beta^{k+1},x_B)<2A_0r_B$ and any 
$y\in Q_\beta^{k+1}$,
$$
d(y,x_B)\leq A_0
\left[d(y,x_\beta^{k+1})+d(x_\beta^{k+1},x_B)\right]
\leq A_0(C_\#+2A_0)r_B,
$$
which further implies that 
$Q_\beta^{k+1}\subset B(x_B,A_0(C_\#+2A_0)r_B)$ 
and hence 
$$
\left\{\beta\in\mathcal{G}_k:d(x_\beta^{k+1},x_B)<2A_0r_B\right\}\subset 
\left\{\beta\in\mathcal{G}_k:Q_\beta^{k+1}\subset 
B(x_B,A_0(C_\#+2A_0)r_B)\right\}.
$$
Using this, Lemma \ref{l-wave2}, \eqref{lip-c-c}, 
and Lemmas \ref{l-p} and \ref{2-cube}(ii), we infer that 
\begin{align*}
\|F_{1,3}\|_{L^2(\mathcal{X})}&\leq \left[\sum_{k=0}^\infty
\sum_{\{\beta\in\mathcal{G}_k:d(x_\beta^{k+1},x_B)
<2A_0r_B\}} \left|c_\beta^{k+1}\right|^2\right]^{\frac{1}{2}}\\
&\leq \left[\sum_{k=0}^\infty\sum_{\{\beta\in\mathcal{G}_k:
Q_\beta^{k+1}\subset B(x_B,A_0(C_\#+2A_0)r_B)\}}
\left|c_\beta^{k+1}\right|^2\right]^{\frac{1}{2}}\\
&\leq \left[\sum_{\{\alpha\in\mathcal{A}_0:Q_\alpha^0
\cap B(x_B,A_0(C_\#+2A_0)r_B)\neq \emptyset\}}
\sum_{k=0}^\infty\sum_{\{\beta\in\mathcal{G}_k:Q_\beta^{k+1}
\subset Q_\alpha^0\}}\left|c_\beta^{k+1}\right|^{2}\right]^{\frac{1}{2}}\\
&\leq \|c\|_\ast\left[\sum_{\{\alpha\in\mathcal{A}_0:
Q_\alpha^0\cap B(x_B,A_0(C_\#+2A_0)r_B)\neq \emptyset\}}
\left[\mu(Q_\alpha^{0})\right]^{2\theta+1}\right]^{\frac{1}{2}}\\
&\leq \|c\|_\ast\left[\sum_{\{\alpha\in\mathcal{A}_0:Q_\alpha^0
\cap B(x_B,A_0(C_\#+2A_0)r_B)\neq \emptyset\}}
\mu(Q_\alpha^{0})\right]^{\frac{1}{2}+\theta}\\
&\lesssim \|c\|_\ast[\mu(B)]^{\theta+\frac{1}{2}}.
\end{align*}

Now, we estimate $F_{1,4}$. By \eqref{lip-c-c} and Lemma \ref{l-wave1}(i), 
we have, for any $x\in B$,  
\begin{align*}
|F_{1,4}(x)|&\leq  \sum_{k=0}^\infty
\sum_{\{\beta\in\mathcal{G}_k:d(x_\beta^{k+1},
x_B)\geq2A_0r_B\}} \left|c_\beta^{k+1}\right|
\left|\psi_\beta^{k+1}(x)\right|\\
&\lesssim \|c\|_\ast \sum_{k=0}^\infty
\sum_{\{\beta\in\mathcal{G}_k:d(x_\beta^{k+1},
x_B)\geq2A_0r_B\}} \left[\mu(Q_\beta^{k+1}
)\right]^\theta E_k\left(x_\beta^{k+
1},x;1\right).
\end{align*}
From $d(x_\beta^{k+1},x_B)\geq2A_0r_B$, 
we deduce that, for any $x\in B$,
\begin{equation*}
2A_0r_B\leq d(x_\beta^{k+1},x_B)\leq A_0\left[d(x_\beta^{k+1},x)
+d(x,x_B)\right]< A_0d(x_\beta^{k+1},x)+A_0r_B,
\end{equation*}
which further implies that
\begin{equation}\label{e-a-b-1}d(x,x_B)<r_B<d(x_\beta^{k+1},x)
\end{equation}
and hence
$$
d(x_\beta^{k+1},x_B)< 2A_0d(x_\beta^{k+1},x).
$$
By this and 
\eqref{e-a-b-1}, we find that, for any $x\in B$, 
$$
d(x_\beta^{k+1},x)>\frac{d(x_\beta^{k+1},x_B)}{4A_0}+\frac{r_B}{2}.
$$
Using this, Lemmas \ref{l-ball2} and \ref{sum-e}, 
we deduce that, for any $x\in B$, 
\begin{align*}
|F_{1,4}(x)|&\lesssim \|c\|_\ast \sum_{k=0}^\infty
\sum_{\{\beta\in\mathcal{G}_k:d(x_\beta^{k+1},
x_B)\geq2A_0r_B\}} \left[\mu(Q_\beta^{k+1}
)\right]^\theta\exp\left(-\nu
\left[\frac{d(x_\beta^{k+1},x_B)}{4A_0\delta^k}
+\frac{r_B}{2\delta^k}\right]^s\right)\\
&\lesssim \|c\|_\ast [\mu(B)]^\theta 
\sum_{k=0}^\infty\sum_{\{\beta\in\mathcal{G}_k:d(x_\beta^{k+1},
x_B)\geq2A_0r_B\}} \left[1+\frac{d(x_\beta^{k+1},x_B)}{r_B}
\right]^{\theta\omega}\\
&\quad\times E_k\left(x_\beta^{k+1},x_B;2^{2s+1}A_0^s\right)
\exp\left[-\frac{\nu}{2^{a+1}}
\left(\frac{r_B}{\delta^k}\right)^s\right]\\
&\lesssim \|c\|_\ast [\mu(B)]^\theta 
\sum_{k=0}^\infty \exp\left[-\frac{\nu}{2^{a+1}}
\left(\frac{1}{\delta^k}\right)^s\right]\\
&\quad\times\sum_{\{\beta\in\mathcal{G}_k:d(x_\beta^{k+1},
x_B)\geq2A_0r_B\}} E_k\left(x_\beta^{k+1},x_B;4^{s+1}A_0^s\right)\\
&\lesssim\|c\|_\ast [\mu(B)]^\theta
\end{align*}
and hence
$$
\|F_{1,4}\|_{L^2(B)}\lesssim \|c\|_\ast[\mu(B)]^{\theta+\frac{1}{2}}.
$$

To summarize the estimates of $F_{1,1}$, $F_{1,2}$, 
$F_{1,3}$, and $F_{1,4}$, we conclude that, 
for almost everywhere $x\in B$, 
$$
\sum_{\alpha\in\mathcal{A}_0} c_\alpha^0
\phi_\alpha^0(x)+\sum_{k=0}^\infty\sum_{\beta\in\mathcal{G}_k} 
c_\beta^{k+1}\psi_\beta^{k+1}(x)
$$
converges. Letting  
\begin{equation}\label{e-d-sum-f}
f:=\sum_{\alpha\in\mathcal{A}_0} c_\alpha^0
\phi_\alpha^0+\sum_{k=0}^\infty\sum_{\beta\in\mathcal{G}_k} 
c_\beta^{k+1}\psi_\beta^{k+1}, 
\end{equation}
then we find that \eqref{e-d-sum-f} converges in 
$L_{\mathcal{B}}^2(\mathcal{X})$ and, 
for any $B\subset \mathcal{X}$ with $r_B\in(1,\infty)$,
\begin{equation}\label{e-f-1}
\left\|f\right\|
_{L^2(B)}
\lesssim \|c\|_\ast[\mu(B)]^{\theta+\frac{1}{2}}.
\end{equation}

Case 2. $r_B\in(0,1]$. In this case, write, for any $x\in B$, 
\begin{align*}
f(x)
&=\sum_{\alpha\in\mathcal{A}_0} c_\alpha^0
\left[\phi_\alpha^0(x)-\phi_\alpha^0(x_B)\right]+ 
\sum_{\alpha\in\mathcal{A}_0} c_\alpha^0\phi_\alpha^0(x_B)\\
&\quad+\sum_{\{k\in\mathbb{Z}_+:\delta^k\leq r_B\}}
\sum_{\{\beta\in\mathcal{G}_k:
d(x_\beta^{k+1},x_B)<2A_0r_B\}} c_\beta^{k+1}\psi_\beta^{k+1}(x)\\
&\quad+\sum_{\{k\in\mathbb{Z}_+:\delta^k\leq r_B\}}
\sum_{\{\beta\in\mathcal{G}_k:d(x_\beta^{k+1},
x_B)\geq2A_0r_B\}} c_\beta^{k+1}\psi_\beta^{k+1}(x)\\
&\quad+\sum_{\{k\in\mathbb{Z}_+:\delta^k> r_B\}}
\sum_{\beta\in\mathcal{G}_k} c_\beta^{k+1}
\left[\psi_\beta^{k+1}(x)-\psi_\beta^{k+1}(x_B)\right]\\
&\quad+\sum_{\{k\in\mathbb{Z}_+:\delta^k>r_B\}}
\sum_{\beta\in\mathcal{G}_k} c_\beta^{k+1}\psi_\beta^{k+1}(x_B)\\
&=:F_{2,1}+F_{2,2}+F_{2,3}+F_{2,4}+F_{2,5}+F_{2,6}.
\end{align*} 
We first estimate $F_{2,1}$. Applying Lemma \ref{l-wave1}(ii), 
\eqref{lip-c-c}, Lemma \ref{l-ball2}, $r_B\in(0,1]$, 
$\theta\in(0,\frac{\eta}{\omega})$, and Lemma \ref{sum-e}, 
we infer that, for any $x\in B$,  
\begin{align*}
|F_{2,1}(x)|&\lesssim \|c\|_\ast\sum_{\alpha\in\mathcal{A}_0}
\left[\mu(Q_\alpha^0
)\right]^\theta[d(x,x_B)]^\eta 
E_0\left(x_\alpha^0,x_B;1\right)\\
&\lesssim \|c\|_\ast[\mu(B)]^\theta\sum_{\alpha\in\mathcal{A}_0} 
\left[\frac{1+d(x_\alpha^0,x_B)}{r_B}\right]^{\theta\omega}
r_B^\eta E_0\left(x_\alpha^0,x_B;1\right)\\
&\lesssim \|c\|_\ast[\mu(B)]^\theta\sum_{\alpha\in\mathcal{A}_0} 
E_0\left(x_\alpha^0,x_B;2\right)
\lesssim \|c\|_\ast[\mu(B)]^\theta, 
\end{align*}
which further implies that 
$$
\|F_{2,1}\|_{L^2(B)}\lesssim \|c\|_\ast[\mu(B)]^{\theta+\frac{1}{2}}.
$$
Now, we estimate $F_{2,2}$. 
By Lemma \ref{l-wave1}(i), \eqref{lip-c-c}, and Lemma \ref{l-ball2}, 
we conclude that, 
\begin{align*}
|F_{2,2}|&\lesssim \sum_{\alpha\in\mathcal{A}_0}
\left[\mu(Q_\alpha^0)\right]^\theta 
E_0\left(x_\alpha^0,x_B;1\right)\\
&\lesssim \sum_{\alpha\in\mathcal{A}_0} \left[\frac{1+
d(x_\alpha^0,x_B)}{r_B}\right]^{\theta\omega}
 E_0\left(x_\alpha^0,x_B;1\right)\\
&\lesssim \sum_{\alpha\in\mathcal{A}_0} 
E_0\left(x_\alpha^0,x_B;2\right)\lesssim 1, 
\end{align*}
where the implicit positive constants may depend on $B$.   
Using an argument similar to that used in the estimate of  
$F_{1,3}$ and $F_{1,4}$, we obtain 
$$
\|F_{2,3}+F_{2,4}\|_{L^2(B)}\lesssim \|c\|_\ast[\mu(B)]^{\theta+\frac{1}{2}}.
$$
For $F_{2,5}$, by an argument similar to that used in 
the estimate of $F_{2,1}$, we find that, for any $x\in B$, 
$$
\sum_{\beta\in\mathcal{G}_k} \left|c_\beta^{k+1}\right|
\left|\psi_\beta^{k+1}(x)-\psi_\beta^{k+1}(x_B)\right|
\lesssim \|c\|_\ast[\mu(B)]^\theta\left(\frac{r_B}{\delta^k}
\right)^{\eta-\theta\omega},
$$
which, together with $\theta\in(0,\frac{\eta}{\omega})$, 
further implies that, 
$$
|F_{2,5}(x)|\lesssim \|c\|_\ast[\mu(B)]^\theta
\sum_{\{k\in\mathbb{Z}_+:\delta^k>r_B\}}
\left(\frac{r_B}{\delta^k}\right)^{\eta-\theta\omega}
\lesssim \|c\|_\ast[\mu(B)]^\theta
$$
and hence 
$$
\|F_{2,5}\|_{L^2(B)}\lesssim \|c\|_\ast[\mu(B)]^{\theta+\frac{1}{2}}.
$$
To estimate $F_{2,6}$, using an argument similar to 
that used in the estimate of $F_{2,2}$, we have 
$$
\sum_{\beta\in\mathcal{G}_k} \left|c_\beta^{k+1}\right|
\left|\psi_\beta^{k+1}(x_B)\right|\lesssim 1,
$$
which, combining the fact that 
$\#\{k\in\mathbb{Z}_+:\delta^k>r_B\}$
is a finite number only depending on $r_B$, further implies that 
\begin{align*}
|F_{2,6}|\lesssim 1, 
\end{align*}
where the implicit positive constants may depend on $B$. 
Let $c_B:=F_{2,2}+F_{2,6}$. From the estimates 
of $F_{2,1}$ through $F_{2,6}$, we deduce that 
$$
\|f-c_B\|_{L^2(B)}\lesssim 
\|c\|_\ast[\mu(B)]^{\theta+\frac{1}{2}},
$$
which, together with the H\"older inequality, further implies that 
$$
\|f-f_B\|_{L^2(B)}\leq \|f-c_B\|_{L^2(B)}+\|c_B-f_B\|_{L^2(B)}\lesssim 
\|c\|_\ast[\mu(B)]^{\theta+\frac{1}{2}}.
$$ 
Applying this, \eqref{e-f-1}, and \cite[Corollary 7.5]{hyy19}, 
we infer that $f\in \mathrm{lip}_\theta(\mathcal{X})$ and 
$$
\|f\|_{\mathrm{lip}_\theta(\mathcal{X})}\leq C\|c\|^\ast.
$$
This finishes the proof of Proposition \ref{lip-c}. 
\end{proof}

Using Propositions \ref{c-lip} and \ref{lip-c}, we have 
the following wavelet reproducing formula of functions in  
$\mathrm{lip}_\theta(\mathcal{X})$. 

\begin{proposition}\label{t-lip-wave}
Let $\eta\in(0,1]$ as the same in Lemma \ref{l-wave1}, 
$\theta\in(0,\frac{\eta}{\omega})$. Then, for any 
$f\in \mathrm{lip}_\theta(\mathcal{X})$, 
$$
f=\sum_{\alpha\in\mathcal{A}_0} 
\left\langle f, \phi_\alpha^0\right\rangle\phi_\alpha^0
+\sum_{k=0}^\infty\sum_{\beta\in\mathcal{G}_k} 
\left\langle f,\psi_\beta^{k+1} \right\rangle \psi_\beta^{k+1} 
$$ 
in $L_{\mathcal{B}}^2(\mathcal{X})$. 
\end{proposition}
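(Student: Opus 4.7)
The plan is to combine Propositions~\ref{c-lip} and~\ref{lip-c} to construct a candidate limit $g$, and then identify $f$ with $g$. For $f\in\mathrm{lip}_\theta(\mathcal{X})$, set $c_\alpha^0:=\langle f,\phi_\alpha^0\rangle$ and $c_\beta^{k+1}:=\langle f,\psi_\beta^{k+1}\rangle$. Proposition~\ref{c-lip} gives $\|c\|_\ast\lesssim\|f\|_{\mathrm{lip}_\theta(\mathcal{X})}<\infty$, so Proposition~\ref{lip-c} produces $g\in\mathrm{lip}_\theta(\mathcal{X})$ such that
$$
g=\sum_{\alpha\in\mathcal{A}_0}c_\alpha^0\phi_\alpha^0+\sum_{k=0}^\infty\sum_{\beta\in\mathcal{G}_k}c_\beta^{k+1}\psi_\beta^{k+1}
$$
converges in $L_{\mathcal{B}}^2(\mathcal{X})$. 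The task then reduces to showing $f=g$ in $L_{\mathcal{B}}^2(\mathcal{X})$.

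The first intermediate step is to verify $\langle g,\phi_{\alpha_0}^0\rangle=c_{\alpha_0}^0$ for every $\alpha_0\in\mathcal{A}_0$, and analogously $\langle g,\psi_{\beta_0}^{k_0+1}\rangle=c_{\beta_0}^{k_0+1}$. Both integrals are absolutely convergent: $g$ has polynomial growth by Lemma~\ref{l-point}, and wavelets decay exponentially by Lemmas~\ref{l-wave1}(i) and~\ref{l-wave2}(i). Applying Proposition~\ref{lip-c} to finite truncations of $c$ shows that the partial sums $g_N$ sit in $\mathrm{lip}_\theta(\mathcal{X})$ uniformly in $N$; Lemma~\ref{l-point} then gives them a common polynomial growth bound. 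Exponential decay of $\phi_{\alpha_0}^0$ set against this uniform growth yields $\sup_N\int_{\mathcal{X}\setminus B_R}|g_N|\,|\phi_{\alpha_0}^0|\,d\mu\to 0$ as $R\to\infty$, where $B_R$ is centered at $x_{\alpha_0}^0$. Combining this with $L^2(B_R)$ convergence $g_N\to g$ and the $L^2(\mathcal{X})$-orthonormality identity $\int_\mathcal{X}g_N\phi_{\alpha_0}^0\,d\mu=c_{\alpha_0}^0$ (valid once $\alpha_0$ enters the partial sum, by the orthonormal basis property of the wavelet system of Lemmas~\ref{l-wave1} and~\ref{l-wave2}), a standard interchange of limits produces $\int_\mathcal{X}g\phi_{\alpha_0}^0\,d\mu=c_{\alpha_0}^0$. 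The $\psi$-case is identical.

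Set $h:=f-g\in\mathrm{lip}_\theta(\mathcal{X})$; by the previous paragraph, every wavelet coefficient of $h$ vanishes. To conclude $h=0$ in $L_{\mathcal{B}}^2(\mathcal{X})$, I would pair $h$ against an arbitrary compactly supported $\varphi\in L^2(\mathcal{X})$. The wavelet expansion of $\varphi$ in $L^2(\mathcal{X})$ yields partial sums $\varphi_N\to\varphi$ in $L^2(\mathcal{X})$ satisfying $\int_\mathcal{X}h\varphi_N\,d\mu=\sum_{\mathrm{finite}}\langle\varphi,\phi_\alpha^0\rangle\langle h,\phi_\alpha^0\rangle+\cdots=0$ for every $N$. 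Using the pointwise bound on $|\varphi_N(x)|$ obtained from summing the exponential decay of wavelets against the square-summable coefficients $\{\langle\varphi,\phi_\alpha^0\rangle\}$ and $\{\langle\varphi,\psi_\beta^{k+1}\rangle\}$, together with polynomial growth of $h$, an annular decomposition around $\mathrm{supp}\,\varphi$ allows one to pass to the limit and obtain $\int_\mathcal{X}h\varphi\,d\mu=0$. Since this holds for every compactly supported $\varphi\in L^2(\mathcal{X})$, we get $h=0$ almost everywhere; by continuity (from $h\in\mathrm{lip}_\theta(\mathcal{X})$), $h\equiv 0$, and $f=g$ in $L_{\mathcal{B}}^2(\mathcal{X})$.

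The main obstacle is the quantitative control of the tails in both intermediate steps, where polynomial growth of $\mathrm{lip}_\theta(\mathcal{X})$-functions must be balanced against exponential decay of the wavelets. The estimates assembled in Lemma~\ref{lip-wave}, together with the summation tool of Lemma~\ref{sum-e}, are precisely designed to render these limit arguments rigorous; the whole structure hinges on the exponential-decay nature of the wavelet system of \cite{ah13}.
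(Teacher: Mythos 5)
Your proposal is correct in its overall architecture and matches the paper's skeleton: build the candidate limit from Propositions \ref{c-lip} and \ref{lip-c}, show its wavelet coefficients agree with those of $f$, and finish with a uniqueness statement for $\mathrm{lip}_\theta(\mathcal{X})$ functions with vanishing coefficients. The execution of the two key steps, however, differs genuinely from the paper. For coefficient matching, the paper expands $\widetilde f$ directly, splits the pairing over the dyadic cubes $Q_{\tilde\alpha}^0$, and verifies absolute convergence so that Fubini and orthogonality apply; you instead apply Proposition \ref{lip-c} to truncated coefficient sequences (whose Carleson norms are trivially dominated by $\|c\|_\ast$) to get uniform $\mathrm{lip}_\theta$ bounds on the partial sums, hence uniform polynomial growth via Lemma \ref{l-point}, and then interchange limits using local $L^2$ convergence plus a uniform exponential tail bound. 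Your route is arguably cleaner and avoids the paper's lengthy ${\rm I_1},{\rm I_2}$ estimates; note it is not circular, since the uniformity comes straight from Proposition \ref{lip-c} rather than from Corollary \ref{c-f-fn}. For uniqueness, the paper proves Lemma \ref{lip-lip} by expanding $\mathbf{1}_{4\widetilde B}f$ in $L^2(\mathcal{X})$ and showing $\|f\|_{L^\infty(B)}$ is dominated by an exponentially small error as the auxiliary ball grows; you dualize instead, testing $h=f-g$ against compactly supported $\varphi\in L^2(\mathcal{X})$ via the wavelet expansion of $\varphi$. The one place your sketch is thin is the uniform-in-$N$ control of $\int_{\mathcal{X}\setminus B_R}|h|\,|\varphi_N|\,d\mu$: square-summability of the coefficients alone does not suffice, because $\sum_{\beta}|\psi_\beta^{k+1}(x)|^2$ carries the normalization $[V_{\delta^k}(x_\beta^{k+1})]^{-1}$ and blows up as $k\to\infty$. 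You must instead exploit the exponential decay of $\langle\varphi,\psi_\beta^{k+1}\rangle$ in $d(x_\beta^{k+1},\mathrm{supp}\,\varphi)/\delta^k$ coming from the compact support of $\varphi$, absorb the $[V_{\delta^k}]^{-1}$ factor via Lemma \ref{l-ball2}, and sum with Lemma \ref{sum-e} --- which is precisely the bookkeeping the paper carries out (in primal form) in the proof of Lemma \ref{lip-lip}. With that detail supplied, your argument goes through.
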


To prove Proposition \ref{t-lip-wave}, we need the following lemma. 

\begin{lemma}\label{lip-lip}
Let $\eta\in(0,1]$ be as the same in Lemma \ref{l-wave1}, 
$\theta\in(0,\frac{\eta}{\omega})$, and 
$f\in \mathrm{lip}_\theta(\mathcal{X})$. If, 
for any $\alpha\in\mathcal{A}_0$, $\langle f,
\phi_\alpha^0\rangle=0$, and, for any $k\in\mathbb{Z}_+$ 
and $\beta\in\mathcal{G}_k$, $\langle f,
\psi_\beta^{k+1}\rangle=0$, then, for any $x\in\mathcal{X}$, 
$f(x)=0$. 
\end{lemma}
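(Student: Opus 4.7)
My plan is to prove the stronger statement that $\langle f,\phi_\alpha^k\rangle=0$ for every $k\in\mathbb{Z}_+$ and $\alpha\in\mathcal{A}_k$, and then to recover $f$ pointwise from these low-frequency pairings by applying the reproducing kernel $K_k$ of the spline space $V_k$, which coincides with an exp-ATI at scale $\delta^k$ in the sense of \cite{hlyy}.

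To establish $\langle f,\phi_\alpha^k\rangle=0$ for $k\ge1$ (the case $k=0$ being the hypothesis), I would use the multi-resolution decomposition $V_k=V_0\oplus W_0\oplus\cdots\oplus W_{k-1}$, where $W_j$ is the closed $L^2$-span of $\{\psi_\beta^{j+1}\}_{\beta\in\mathcal{G}_j}$ (cf.\ the remark after Lemma~\ref{l-wave2}). Since $\phi_\alpha^k\in V_k$, it admits the $L^2$-convergent expansion
$$\phi_\alpha^k=\sum_{\gamma\in\mathcal{A}_0}\bigl\langle\phi_\alpha^k,\phi_\gamma^0\bigr\rangle\phi_\gamma^0+\sum_{j=0}^{k-1}\sum_{\beta\in\mathcal{G}_j}\bigl\langle\phi_\alpha^k,\psi_\beta^{j+1}\bigr\rangle\psi_\beta^{j+1}.$$
Pairing against $f$ and interchanging the sum with the $L^2$-pairing makes every term vanish by hypothesis, yielding $\langle f,\phi_\alpha^k\rangle=0$. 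The nontrivial point is the interchange: combining the size estimates in Lemmas~\ref{l-wave1} and~\ref{l-wave2} with the method used to prove Lemma~\ref{lip-wave} shows that $|\langle\phi_\alpha^k,\phi_\gamma^0\rangle|$ and $|\langle\phi_\alpha^k,\psi_\beta^{j+1}\rangle|$ inherit \emph{exponential} decay in the spatial separations $d(x_\alpha^k,x_\gamma^0)$ and $d(x_\alpha^k,x_\beta^{j+1})$. These exponential decays dominate the at-most-polynomial growth allowed for $|\langle f,\phi_\gamma^0\rangle|$ and $|\langle f,\psi_\beta^{j+1}\rangle|$ by Lemma~\ref{lip-wave} (polynomial in the distance from $x_\alpha^k$ via Lemma~\ref{l-ball2}), so Lemma~\ref{sum-e} reduces each spatial sum to a convergent series, and the $j$-sum is finite.

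Having $\langle f,\phi_\alpha^k\rangle=0$ for every $k$ and $\alpha$, I set $Q_kf(x):=\sum_{\alpha\in\mathcal{A}_k}\phi_\alpha^k(x)\langle f,\phi_\alpha^k\rangle$, which converges pointwise by the same exponential-decay versus polynomial-growth trade-off and is identically zero. Since $\{\phi_\alpha^k\}_{\alpha\in\mathcal{A}_k}$ is an orthonormal basis of $V_k$, the operator $Q_k$ is the integral operator with kernel $K_k(x,y):=\sum_\alpha\phi_\alpha^k(x)\phi_\alpha^k(y)$, an exp-ATI in the sense of \cite{hlyy}. Invoking the continuity of $f$ (noted after Definition~\ref{d-lip}) together with the polynomial growth of $f$ given by Lemma~\ref{l-point} and \eqref{eq-doub}, the standard exp-ATI convergence forces $Q_kf(x)\to f(x)$ as $k\to\infty$ for every $x\in\mathcal{X}$. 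Combined with $Q_kf\equiv0$, this gives $f\equiv0$. The main obstacle is exactly the absolute summability justifying the interchange in the first step: this is where the \emph{exponential} (rather than merely polynomial) decay of the wavelets of Aimar--Bernardis--Iaffei is indispensable, since generic test-function decay of the form of \eqref{decay} would not suffice to beat the unbounded growth allowed on $|\langle f,\phi_\gamma^0\rangle|$ as $x_\gamma^0$ moves away from $x_\alpha^k$.
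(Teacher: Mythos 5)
Your proof is correct in outline but follows a genuinely different route from the paper. The paper argues locally: it fixes a ball $B$, truncates $f$ to a large dilate $4\widetilde{B}$ with $\widetilde{B}=A_0t^2B$, expands the (now $L^2$) truncation in the wavelet basis, observes that by hypothesis each coefficient $\langle \mathbf{1}_{4\widetilde{B}}f,\phi_\alpha^0\rangle$ equals $-\langle \mathbf{1}_{\mathcal{X}\setminus(4\widetilde{B})}f,\phi_\alpha^0\rangle$, and then uses the exponential tail bounds of Lemma \ref{lip-wave}(iii)--(iv) together with Lemma \ref{sum-e} to show $\|f\|_{L^\infty(B)}\lesssim[\mu(\widetilde{B})]^\theta\exp[-c(t^2r_B)^s]\to0$ as $t\to\infty$. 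You instead propagate the vanishing of the coefficients to every scale ($\langle f,\phi_\alpha^k\rangle=0$ for all $k$) via the multiresolution splitting $V_k=V_0\oplus W_0\oplus\cdots\oplus W_{k-1}$ and then let the spline projections $Q_k$ act as an approximation of the identity. Both exploit the same exponential-versus-polynomial trade-off, but along different parameters (truncation radius $t$ versus scale $k$). Two points in your sketch deserve attention. First, in step 1 the absolute convergence of $\sum_\gamma|\langle\phi_\alpha^k,\phi_\gamma^0\rangle||\langle f,\phi_\gamma^0\rangle|$ does not by itself justify pairing $f$ against the $L^2$-convergent expansion of $\phi_\alpha^k$, since $f\notin L^2(\mathcal{X})$; you need a dominated-convergence argument showing $\int|f(x)|\sum_\gamma|\langle\phi_\alpha^k,\phi_\gamma^0\rangle||\phi_\gamma^0(x)|\,d\mu(x)<\infty$ (Tonelli plus the size bounds), which is exactly the kind of localization-and-Fubini work the paper carries out in the proof of Proposition \ref{t-lip-wave}; the paper's proof of this lemma avoids it entirely by working with the truncated function. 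Second, step 2 imports from \cite{ah13,hlyy} the facts that the kernel $K_k(x,y)=\sum_\alpha\phi_\alpha^k(x)\phi_\alpha^k(y)$ has exponential decay at scale $\delta^k$ and satisfies $\int_{\mathcal{X}}K_k(x,y)\,d\mu(y)=1$; granting these, $|Q_kf(x)-f(x)|\lesssim\|f\|_{\mathrm{lip}_\theta(\mathcal{X})}[\mu(B(x,\delta^k))]^\theta\to0$ because $\mu(\{x\})=0$, so the argument closes. Your route is thus valid but less self-contained; the paper's is elementary given Lemma \ref{lip-wave}, while yours yields the stronger intermediate fact that all low-pass coefficients vanish.
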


\begin{proof}
Without loss of generality, we assume that 
$\|f\|_{\mathrm{lip}_\theta(\mathcal{X})}=1$. 
To prove Lemma \ref{lip-lip}, we only need to show that, 
for any $\varepsilon\in(0,\infty)$ and 
$B:=B(x_B,r_B)\subset \mathcal{X}$, 
\begin{equation}\label{e-f-inf}
\|f\|_{L^\infty(B)}<\varepsilon.
\end{equation}
Let $\widetilde{B}:=A_0t^2B$, where $t$ is a large 
number such that $A_0t^2r_B>1$. By the definition 
of $\mathrm{lip}_\theta(\mathcal{X})$, we find that 
$\mathbf{1}_{4\widetilde{B}}f\in L^2(\mathcal{X})$. 
Applying Lemmas \ref{l-wave1} and \ref{l-wave2}, write 
\begin{align*}
\mathbf{1}_Bf&=\mathbf{1}_B\mathbf{1}_{4\widetilde{B}}f\\
&=\mathbf{1}_B\sum_{\alpha\in\mathcal{A}_0}
\left\langle \mathbf{1}_{4\widetilde{B}}f, 
\phi_\alpha^0\right\rangle\phi_\alpha^0+\mathbf{1}_B
\sum_{k=0}^\infty\sum_{\beta\in\mathcal{G}_k}
\left\langle \mathbf{1}_{4\widetilde{B}}f, 
\psi_\beta^{k+1}\right\rangle\psi_\beta^{k+1}\\
&=\mathbf{1}_B\sum_{\{\alpha\in\mathcal{A}_0:
d(x_\alpha^0,x_B)<2A_0t^2r_B\}}
\left\langle \mathbf{1}_{4\widetilde{B}}f, 
\phi_\alpha^0\right\rangle\phi_\alpha^0\\
&\quad+\mathbf{1}_B\sum_{\{\alpha\in\mathcal{A}_0:
d(x_\alpha^0,x_B)\geq2A_0t^2r_B\}}
\left\langle \mathbf{1}_{4\widetilde{B}}f, 
\phi_\alpha^0\right\rangle\phi_\alpha^0\\
&\quad + \mathbf{1}_B
\sum_{k=0}^\infty\sum_{\{\beta\in\mathcal{G}_k:
d(x_\beta^{k+1},x_B)<2A_0t^2r_B\}}\left\langle 
\mathbf{1}_{4\widetilde{B}}f, 
\psi_\beta^{k+1}\right\rangle\psi_\beta^{k+1}\\
&\quad+\mathbf{1}_B
\sum_{k=0}^\infty\sum_{\{\beta\in\mathcal{G}_k:
d(x_\beta^{k+1},x_B)\geq2A_0t^2r_B\}}\left\langle 
\mathbf{1}_{4\widetilde{B}}f, 
\psi_\beta^{k+1}\right\rangle\psi_\beta^{k+1}\\
&=:J_1+J_2+J_3+J_4.
\end{align*}

We first consider $J_1$. By $\langle f,\phi_\alpha^0\rangle=0$ 
and Lemma \ref{lip-wave}(iv), 
we have, for any $\alpha\in\mathcal{A}_0$ 
with $d(x_\alpha^0,x_B)<2A_0t^2r_B$,  
\begin{align}\label{e-j-1}
\left|\left\langle \mathbf{1}_{4\widetilde{B}}f, 
\phi_\alpha^0\right\rangle\right|&=\left|\left\langle 
\mathbf{1}_{\mathcal{X}\setminus(4\widetilde{B})}f, 
\phi_\alpha^0\right\rangle\right|\notag\\
&\leq\int_{\{x\in\mathcal{X}:d(x_B,x)>4 
A_0t^2r_B\}}|f(x)|\left|\phi_\alpha^0(x)\right|\,d
\mu(x)\notag\\
&\lesssim[\mu(\widetilde{B})]^\theta\sqrt{V_{1}(x_\alpha^0)}
\exp\left[-\frac{\nu}{3}\left(A_0t^2 r_B\right)^s\right].
\end{align}
From this and Lemmas \ref{l-wave1}(i) and \ref{sum-e}, 
we deduce that, for any $x\in B$, 
\begin{align*}
|J_1(x)|&\leq\sum_{\{\alpha\in\mathcal{A}_0:d(x_\alpha^0,
x_B)<2A_0t^2r_B\}}
\left|\left\langle \mathbf{1}_{4\widetilde{B}}f, \phi_\alpha^0
\right\rangle\right|\left|\phi_\alpha^0(x)\right|\\
&\lesssim
\left[\mu(\widetilde{B})\right]^\theta\exp\left[-\frac{\nu}{3}
\left(A_0t^2 r_B\right)^s\right]\\
&\quad\times\sum_{\{\alpha\in\mathcal{A}_0:d(x_\alpha^0,
x_B)<2A_0t^2r_B\}}\exp\left(-\nu\left[d(x_\alpha^0,x)\right]^s\right)\\
&\lesssim
\left[\mu(\widetilde{B})\right]^\theta\exp\left[-\frac{\nu}{3}
\left(A_0t^2 r_B\right)^s\right].
\end{align*}

To estimate $J_2$, observe that, for any 
$\alpha\in\mathcal{A}_0$ with $d(x_\alpha^0,x_B)
\geq2A_0t^2r_B$ and any $x\in B$, 
$$
2A_0t^2r_B\leq d(x_\alpha^0,x_B)\leq A_0
\left[d(x_\alpha^0,x)+d(x,x_B)\right]<A_0\left[d(x_\alpha^0,x)+r_B\right]
$$
and hence
\begin{equation}\label{e-tb-d}
t^2r_B<d(x_\alpha^0,x).
\end{equation}
Moreover, 
\begin{align*}
1+d(x_B,x_\alpha^0)&\leq 1+
A_0\left[d(x_B,x)+d(x,x_\alpha^0)\right]\\
&<1+A_0r_B+A_0d(x,x_\alpha^0)\\
&< 2A_0t^2r_B+A_0d(x,x_\alpha^0).
\end{align*}
Using this, \eqref{e-tb-d}, and Lemmas \ref{l-wave1}(i) 
and \ref{lip-wave}(ii), we infer that, for any $\alpha\in\mathcal{A}_0$ 
with $d(x_\alpha^0,x_B)\geq2A_0t^2r_B$ and for any $x\in B$, 
\begin{align}\label{e-j-2}
&\left|\left\langle \mathbf{1}_{4\widetilde{B}}f, 
\phi_\alpha^0\right\rangle\right|\left|\phi_\alpha^0(x)\right|\notag\\
&\quad\lesssim
\left[\mu(\widetilde{B})\right]^\theta\left[1+\frac{1
+d(x_B,x_\alpha^0)}{A_0t^2r_B}\right]^{\theta\omega}
E_0\left(x,x_\alpha^0;1\right)\notag\\
&\quad\lesssim
\left[\mu(\widetilde{B})\right]^\theta\left[1
+d(x,x_\alpha^0)\right]^{\theta\omega}
E_0\left(x,x_\alpha^0;1\right)\notag\\
&\quad\lesssim
\left[\mu(\widetilde{B})\right]^\theta
\exp\left[-\frac{\nu}{4}\left(t^2 r_B\right)^s\right]
E_0\left(x,x_\alpha^0;4\right),
\end{align}
where $E_0$ is as in \eqref{def-e-decay} with $k=0$.
This, together with Lemma \ref{sum-e}, further implies that, 
for any $x\in B$, 
\begin{align*}
|J_2(x)|&\leq\sum_{\{\alpha\in\mathcal{A}_0:
d(x_\alpha^0,x_B)\geq2A_0t^2r_B\}}
\left|\left\langle \mathbf{1}_{4\widetilde{B}}f, 
\phi_\alpha^0\right\rangle\right|\left|\phi_\alpha^0(x)\right|\\
&\lesssim 
\left[\mu(\widetilde{B})\right]^\theta
\exp\left[-\frac{\nu}{4}\left(t^2 r_B\right)^s\right]\\
&\quad\times\sum_{\{\alpha\in\mathcal{A}_0:
d(x_\alpha^0,x_B)\geq2A_0t^2r_B\}}
E_0\left(x,x_\alpha^0;4\right)\\
&\lesssim 
\left[\mu(\widetilde{B})\right]^\theta
\exp\left[-\frac{\nu}{4}\left(t^2 r_B\right)^s\right].
\end{align*}

Next, we estimate $J_3$. Using an argument 
similar to that used in \eqref{e-j-1},
we have, for any $\beta\in\mathcal{G}_k$ with 
$d(x_\beta^{k+1},x_B)<2A_0t^2r_B$,  
\begin{align*}
\left|\left\langle \mathbf{1}_{4\widetilde{B}}f, 
\psi_\beta^{k+1}\right\rangle\right|
\lesssim
\left[\mu(\widetilde{B})\right]^\theta
\sqrt{V_{\delta^k}(x_\beta^{k+1})}
\exp\left[-\frac{\nu}{3}\left(\frac{A_0t^2 r_B}{\delta^k}\right)^s\right].
\end{align*}
By this and Lemmas \ref{l-wave2}(i) and \ref{sum-e}, 
we conclude that, for any $x\in B$,
\begin{align*}
|J_3(x)|&\lesssim\sum_{k=0}^\infty\sum_{\{\beta\in\mathcal{G}_k:
d(x_\beta^{k+1},x_B)<2A_0t^2r_B\}}
\left|\left\langle \mathbf{1}_{4\widetilde{B}}f, 
\psi_\beta^{k+1}\right\rangle\right|\left|\psi_\beta^{k+1}(x)\right|\\
&\lesssim 
\left[\mu(\widetilde{B})\right]^\theta\sum_{k=0}^\infty
\exp\left[-\frac{\nu}{3}\left(\frac{A_0t^2 r_B}{\delta^k}\right)^s\right]\\
&\quad\times\sum_{\{\beta\in\mathcal{G}_k:
d(x_\beta^{k+1},x_B)<2A_0t^2r_B\}}
E_k\left(x,x_\beta^{k+1};1\right)\\
&\lesssim 
\left[\mu(\widetilde{B})\right]^\theta
\exp\left[-\frac{\nu}{6}
\left(A_0t^2 r_B\right)^s\right].
\end{align*}

Finally, we estimate $J_4$. Applying an argument 
similar to that used in \eqref{e-j-2}, we infer that, 
for any $\beta\in\mathcal{G}_k$ with $d(x_\beta^{k+1},
x_B)\geq2A_0t^2r_B$,
\begin{align*}
&\left|\left\langle \mathbf{1}_{4\widetilde{B}}f, 
\psi_\beta^{k+1}\right\rangle\right|\left|\psi_\beta^{k+1}(x)\right|\\
&\quad\lesssim
\left[\mu(\widetilde{B})\right]^\theta\exp\left[-\frac{\nu}{4}
\left(\frac{t^2 r_B}{\delta^k}\right)^s\right]
E_k\left(x,x_\beta^{k+1};4\right),
\end{align*}
which, combined with Lemma \ref{sum-e}, 
further implies that, for any $x\in B$, 
\begin{align*}
|J_4(x)|&\lesssim\sum_{k=0}^\infty
\sum_{\{\beta\in\mathcal{G}_k:
d(x_\beta^{k+1},x_B)\geq2A_0t^2r_B\}}
\left|\left\langle \mathbf{1}_{4\widetilde{B}}f, 
\psi_\beta^{k+1}\right\rangle\right|
\left|\psi_\beta^{k+1}(x)\right|\\
&\lesssim 
\left[\mu(\widetilde{B})\right]^\theta\sum_{k=0}^\infty
\exp\left[-\frac{\nu}{4}\left(\frac{t^2 r_B}{\delta^k}\right)^s\right]\\
&\quad\times\sum_{\{\beta\in\mathcal{G}_k:
d(x_\beta^{k+1},x_B)\geq2A_0t^2r_B\}}
E_k\left(x,x_\beta^{k+1};4\right)\\
&\lesssim 
\left[\mu(\widetilde{B})\right]^\theta\exp\left[-\frac{\nu}{8}
\left(t^2 r_B\right)^s\right].
\end{align*}

To summarize the estimates of $J_1$, $J_2$, $J_3$, 
and $J_4$, we find that 
\begin{align*}
\|\mathbf{1}_Bf\|_{L^\infty(B)}&\leq \|J_1\|_{L^\infty(B)}
+\|J_2\|_{L^\infty(B)}+\|J_3\|_{L^\infty(B)}+\|J_4\|_{L^\infty(B)}\\
&\lesssim 
\left[\mu(\widetilde{B})\right]^\theta\exp\left[-\frac{\nu}{8}
\left(t^2 r_B\right)^s\right]\to 0
\end{align*}
as $t\to\infty$. This finishes the proof of 
\eqref{e-f-inf} and hence Lemma \ref{lip-lip}. 
\end{proof}

Now, we show Proposition \ref{t-lip-wave}.

\begin{proof}[Proof of Proposition \ref{t-lip-wave}]
Without loss of generality, we assume that 
$\|f\|_{\mathrm{lip}_\theta(\mathcal{X})}=1$. 
By Propositions \ref{c-lip} and \ref{lip-c}, it follows that 
$$
\sum_{\alpha\in\mathcal{A}_0} 
\left\langle f, \phi_\alpha^0\right\rangle\phi_\alpha^0
+\sum_{k=0}^\infty\sum_{\beta\in\mathcal{G}_k} 
\left\langle f,\psi_\beta^{k+1} \right\rangle \psi_\beta^{k+1} 
$$ 
converges in $L_{\mathcal{B}}^2(\mathcal{X})$. 
Denote this limit by $\widetilde{f}$. To show Proposition \ref{t-lip-wave}, we only need to 
prove that $f=\widetilde{f}$ pointwise. Using 
Lemma \ref{lip-lip}, we further reduce to show that, 
for any $\alpha_0\in\mathcal{A}_0$, 
\begin{equation}\label{e-ff-0}
\left\langle f-\widetilde{f},\phi_{\alpha_0}^0\right\rangle=0,
\end{equation}
and, for any $k_0\in\mathbb{Z}_+$ and 
$\beta_0\in\mathcal{G}_{k_0}$, 
\begin{equation}\label{e-ff-p}
\left\langle f-\widetilde{f},\psi_{\beta_0}^{k_0+1}\right\rangle=0.
\end{equation}
To this end, let, for any $\alpha\in\mathcal{A}_0$, 
$c_\alpha^0:=\langle f, \phi_\alpha^0\rangle$ and, 
for any $k\in\mathbb{Z}_+$ and 
$\beta\in\mathcal{G}_k$,  $c_\beta^{k+1}
:=\langle f,\psi_\beta^{k+1} \rangle$.

We first show \eqref{e-ff-0}. By Lemma \ref{p-lipd},
we find that, for any $\alpha_0\in\mathcal{A}_0$,   
\begin{align*}
\left\langle \widetilde{f},\phi_{\alpha_0}^0\right\rangle
&=\int_\mathcal{X}\widetilde{f}(x)\phi_{\alpha_0}^0(x)\,d\mu(x)\\
&=\sum_{\tilde{\alpha}\in\mathcal{A}_0} \int_\mathcal{X}
\widetilde{f}(x)\mathbf{1}_{Q_{\tilde{\alpha}}^0}(x)
\phi_{\alpha_0}^0(x)\widetilde{f}(x)
\mathbf{1}_{Q_{\tilde{\alpha}}^0}(x)\,d\mu(x)\\
&=\sum_{\tilde{\alpha}\in\mathcal{A}_0}
\sum_{\alpha\in\mathcal{A}_0} 
\left\langle c_\alpha^0\phi_\alpha^0
\mathbf{1}_{Q_{\tilde{\alpha}}^0}, \phi_{\alpha_0}^0
\mathbf{1}_{Q_{\tilde{\alpha}}^0}\right\rangle\\
&\quad+\sum_{\tilde{\alpha}\in\mathcal{A}_0}
\sum_{\alpha\in\mathcal{A}_0}
\left\langle \sum_{\{(k,\beta): k\in\mathbb{Z}_+,
\beta\in\mathcal{G}_k,Q_{\beta}^{k+1}\subset 
Q_{\alpha}^0\}}c_\beta^{k+1}\psi_\beta^{k+1}
\mathbf{1}_{Q_{\tilde{\alpha}}^0}, \phi_{\alpha_0}^0
\mathbf{1}_{Q_{\tilde{\alpha}}^0}\right\rangle\\
&=: {\rm I_1}+{\rm I_2}.
\end{align*}
To estimate ${\rm I_1}$, by Lemma \ref{l-ball}(ii), 
we have, for any $\alpha\in\mathcal{A}_0$,  
\begin{align*}
\left\|\phi_\alpha^0\mathbf{1}_{Q_{\tilde{\alpha}}^0}
\right\|_{L^2(\mathcal{X})}&\lesssim \left[\int_{Q_{\tilde{\alpha}}^0}
\frac{1}{V_1(x_\alpha^0)}E_0\left(x_\alpha^0,
x;\frac12\right)\,d\mu(x)\right]^{\frac{1}{2}},
\end{align*}
where $E_0$ is as in \eqref{def-e-decay} with $k=0$.
If $d(x_\alpha^0,x_{\tilde{\alpha}}^0)\leq 2A_0C_\#$, then, 
for any $x\in Q_{\tilde{\alpha}}^0$, 
$$
d(x_\alpha^0,x)\leq A_0\left[d(x_\alpha^0,x_{\tilde{\alpha}}^0)
+d(x_{\tilde{\alpha}}^0,x)\right]\lesssim 1. 
$$
From this and Lemma \ref{l-ball}, we deduce that 
$$
\left\|\phi_\alpha^0\mathbf{1}_{Q_{\tilde{\alpha}}^0}\right\|_{L^2
(\mathcal{X})}\lesssim 1
\lesssim E_0\left(x_\alpha^0,x_{\tilde{\alpha}}^0;2\right).
$$
If $d(x_\alpha^0,x_{\tilde{\alpha}}^0)>2A_0 C_\#$, then, 
for any $x\in Q_{\tilde{\alpha}}^0$, 
$$
d(x_\alpha^0,x_{\tilde{\alpha}}^0)\leq A_0\left[d(x_\alpha^0,x)
+d(x,x_{\tilde{\alpha}}^0)\right]\leq A_0d(x_\alpha^0,x)
+\frac{1}{2}d(x_\alpha^0,x_{\tilde{\alpha}}^0)
$$
and hence $\frac{1}{2A_0}d(x_\alpha^0,x_{\tilde{\alpha}}^0)
\leq d(x_\alpha^0,x)$. 
Using this and Lemmas \ref{l-ball2} and \ref{l-ball}, we infer that 
\begin{align*}
\left\|\phi_\alpha^0\mathbf{1}_{Q_{\tilde{\alpha}}^0}
\right\|_{L^2(\mathcal{X})}
&\lesssim \left[\int_{Q_{\tilde{\alpha}}^0}\frac{1}{V_1(x_\alpha^0)}
E_0\left(x_\alpha^0,x;\frac12\right)\,d\mu(x)\right]^{\frac{1}{2}}\\
&\lesssim E_0\left(x_\alpha^0,x_{\tilde{\alpha}}^0;2A_0\right)
\left[\int_{Q_{\tilde{\alpha}}^0}\frac{1}{V_1(x_\alpha^0)}
E_0\left(x_\alpha^0,x;1\right)\,d\mu(x)\right]^{\frac{1}{2}}\\
&\lesssim E_0\left(x_\alpha^0,x_{\tilde{\alpha}}^0;2A_0\right).
\end{align*}
By the H\"older inequality, Proposition \ref{c-lip}, and 
Lemmas \ref{l-ball2} and \ref{sum-e}, we conclude that
\begin{align*}
&\sum_{\tilde{\alpha}\in\mathcal{A}_0}
\sum_{\alpha\in\mathcal{A}_0} 
\left|\left\langle c_\alpha^0\phi_\alpha^0
\mathbf{1}_{Q_{\tilde{\alpha}}^0}, \phi_{\alpha_0}^0
\mathbf{1}_{Q_{\tilde{\alpha}}^0}\right\rangle\right|\\
&\quad\lesssim\sum_{\tilde{\alpha}\in\mathcal{A}_0}
\sum_{\alpha\in\mathcal{A}_0} 
\left[\mu(Q_\alpha^0)\right]^{\theta+\frac{1}{2}}\left\|\phi_\alpha^0
\mathbf{1}_{Q_{\tilde{\alpha}}^0}\right\|_{L^2(\mathcal{X})}
\left\|\phi_{\alpha_0}^0\mathbf{1}_{Q_{\tilde{\alpha}}^0}
\right\|_{L^2(\mathcal{X})}\\
&\quad\lesssim \left[\mu(Q_{\alpha_0}^0)\right]^{\theta+\frac{1}{2}}
\sum_{\tilde{\alpha}\in\mathcal{A}_0}\sum_{\alpha\in\mathcal{A}_0}
\left[\frac{\mu(Q_\alpha^0)}{\mu(
Q_{\alpha_0}^0)}\right]^{\theta+\frac{1}{2}}\\
&\qquad\times E_0\left(x_\alpha^0,
x_{\tilde{\alpha}}^0;2A_0\right)
E_0\left(x_{\alpha_0}^0,x_{\tilde{\alpha}}^0;2A_0\right)\\
&\quad\lesssim 
\left[\mu(Q_{\alpha_0}^0)\right]^{\theta+\frac{1}{2}}.
\end{align*}
This implies that ${\rm I_1}$ converges absolutely. 

Next, we estimate ${\rm I_2}$. To this end, we consider 
two cases on $d(x_{\alpha}^{0},x_{\tilde{\alpha}}^{0})$. 

Case 1. $d(x_{\alpha}^{0},x_{\tilde{\alpha}}^{0})
\leq4A_0^2C_\#$. In this case, by Lemmas \ref{l-wave1} 
and \ref{l-wave2}, Proposition \ref{c-lip}, and Lemma \ref{l-ball}(i), 
we obtain 
\begin{align*}
&\left\|\sum_{\{(k,\beta): k\in\mathbb{Z}_+,\beta\in\mathcal{G}_k,
Q_{\beta}^{k+1}\subset Q_{\alpha}^0\}}c_\beta^{k+1}
\psi_\beta^{k+1}\mathbf{1}_{Q_{\tilde{\alpha}}^0}\right\|_{L^2(\mathcal{X})}\\
&\quad \lesssim \left(\sum_{\{(k,\beta): k\in\mathbb{Z}_+,
\beta\in\mathcal{G}_k,Q_{\beta}^{k+1}\subset Q_{\alpha}^0\}}
\left|c_\beta^{k+1}\right|^2\right)^{\frac{1}{2}}\\
&\quad \lesssim\left[\mu(Q_{\alpha}^0)\right]^{\theta+\frac{1}{2}}
\lesssim \left[\mu(Q_{\tilde{\alpha}}^0)\right]^{\theta+\frac{1}{2}}
E_0\left(x_{\alpha}^{0},x_{\tilde{\alpha}}^{0};1\right).
\end{align*}

Case 2. $d(x_{\alpha}^{0},
x_{\tilde{\alpha}}^{0})>4A_0^2C_\#$.
In this case,  for any $(k,\beta)$ such that  
$Q_{\beta}^{k+1}\subset Q_{\alpha}^0$ and any 
$x\in Q_{\tilde{\alpha}}^{0}$,
\begin{equation*}
d(x_{\alpha}^{0},x_{\tilde{\alpha}}^{0})< A_0d(x_{\alpha}^{0},x)
+A_0C_\#<A_0^2C_\#+A_0^2d(x_{\beta}^{k+1},x)+A_0C_\#,
\end{equation*}
which further implies that
\begin{equation}
\frac{1+d(x_{\alpha}^{0},x_{\tilde{\alpha}}^{0})}{4A_0^2}
<d(x_{\beta}^{k+1},x).
\end{equation}
Applying this, Lemma \ref{l-wave2}(i), Proposition \ref{c-lip}, 
and Lemmas \ref{l-ball2} and \ref{sum-e}, 
we deduce that, for any $x\in Q_{\tilde{\alpha}}^{0}$, 
\begin{align*}
&\left|\sum_{\{(k,\beta): k\in\mathbb{Z}_+,\beta\in
\mathcal{G}_k,Q_{\beta}^{k+1}\subset Q_{\alpha}^0\}}
c_\beta^{k+1}\psi_\beta^{k+1}(x)\right|\\
&\quad\lesssim \sum_{k=0}^\infty\sum_{\{\beta
\in\mathcal{G}_k:Q_{\beta}^{k+1}\subset Q_{\alpha}^0\}} 
\left[\mu(Q_\beta^{k+1})\right]^\theta 
E_k\left(x_{\beta}^{k+1},x;2\right)
\exp\left\{-\frac{\nu}{2}
\left[\frac{1+d(x_{\alpha}^{0},x_{\tilde{\alpha}}^{0})}{4
A_0^2\delta^k}\right]^s\right\}\\
&\quad\lesssim  \left[\mu(Q_{\tilde{\alpha}}^{0})\right]^\theta 
\sum_{k=0}^\infty\left[1+d(x_{\alpha}^{0},x_{\tilde{\alpha}}^{0})
\right]^{\theta\omega}\exp\left\{-\frac{\nu}{2}
\left[\frac{1+d(x_{\alpha}^{0},x_{\tilde{\alpha}}^{0})}{4
A_0^2\delta^k}\right]^s\right\} \\
&\qquad\times\sum_{\{\beta\in\mathcal{G}_k:Q_{\beta}^{k+1}
\subset Q_{\alpha}^0\}}
E_k\left(x_{\beta}^{k+1},x;2\right)\\
&\quad\lesssim \left[\mu(Q_{\tilde{\alpha}}^{0})\right]^\theta 
\exp\left\{-\frac{\nu}{4}
\left[\frac{1+d(x_{\alpha}^{0},x_{\tilde{\alpha}}^{0})}{4
A_0^2}\right]^s\right\}\sum_{k=0}^\infty \delta^{k\theta\omega}\\
&\quad \lesssim \left[\mu(Q_{\tilde{\alpha}}^{0})\right]^\theta 
E_0\left(x_{\alpha}^{0},x_{\tilde{\alpha}}^{0};
4^{s+1}A_0^{2s}\right)
\end{align*}
and hence
\begin{align*}
&\left\|\sum_{\{(k,\beta): k\in\mathbb{Z}_+,\beta\in
\mathcal{G}_k,Q_{\beta}^{k+1}\subset Q_{\alpha}^0\}}
c_\beta^{k+1}\psi_\beta^{k+1}
\mathbf{1}_{Q_{\tilde{\alpha}}^0}\right\|_{L^2(\mathcal{X})}\\
&\quad \lesssim \left[\mu(Q_{\tilde{\alpha}}^0)\right]^{\theta
+\frac{1}{2}}E_0\left(x_{\alpha}^{0},x_{\tilde{\alpha}}^{0};
4^{s+1}A_0^{2s}\right).
\end{align*}
Combining the estimates of Cases 1 and 2, 
the H\"older inequality, Proposition \ref{c-lip}, and 
Lemma \ref{sum-e}, we conclude that
\begin{align*}
&\sum_{\tilde{\alpha}\in\mathcal{A}_0}
\sum_{\alpha\in\mathcal{A}_0}
\left|\left\langle \sum_{\{(k,\beta): k\in\mathbb{Z}_+,
\beta\in\mathcal{G}_k,Q_{\beta}^{k+1}
\subset Q_{\alpha}^0\}}c_\beta^{k+1}\psi_\beta^{k+1}
\mathbf{1}_{Q_{\tilde{\alpha}}^0}, \phi_{\alpha_0}^0
\mathbf{1}_{Q_{\tilde{\alpha}}^0}\right\rangle\right|\\
&\quad\lesssim\sum_{\tilde{\alpha}\in\mathcal{A}_0}
\sum_{\alpha\in\mathcal{A}_0}\left\|
\sum_{\{(k,\beta): k\in\mathbb{Z}_+,\beta\in\mathcal{G}_k,
Q_{\beta}^{k+1}\subset Q_{\alpha}^0\}}c_\beta^{k+1}
\psi_\beta^{k+1}\mathbf{1}_{Q_{\tilde{\alpha}}^0}
\right\|_{L^2(\mathcal{X})}\left\|\phi_{\alpha_0}^0
\mathbf{1}_{Q_{\tilde{\alpha}}^0}\right\|_{L^2(\mathcal{X})}\\
&\quad \lesssim\sum_{\tilde{\alpha}\in\mathcal{A}_0}
\sum_{\alpha\in\mathcal{A}_0}
\left[\mu(Q_{\tilde{\alpha}}^0)\right]^{\theta+\frac{1}{2}}
E_0\left(x_{\alpha}^{0},x_{\tilde{\alpha}}^{0};4^{s+1}A_0^{2s}\right)
E_0\left(x_{\alpha}^{0},x_{\tilde{\alpha}}^{0};2A_0\right)\\
&\quad\lesssim  
\left[\mu(Q_{\alpha_0}^0)\right]^{\theta+\frac{1}{2}}, 
\end{align*}
which further implies that ${\rm I_2}$ converges absolutely. 
Therefore, by the Fubini theorem and the orthogonality 
of wavelets, we have 
\begin{align*}
\left\langle \widetilde{f},\phi_{\alpha_0}^0\right\rangle&=
\sum_{\alpha\in\mathcal{A}_0}\sum_{\tilde{\alpha}\in\mathcal{A}_0} 
\left\langle c_\alpha^0\phi_\alpha^0\mathbf{1}_{Q_{\tilde{\alpha}}^0}, 
\phi_{\alpha_0}^0\mathbf{1}_{Q_{\tilde{\alpha}}^0}\right\rangle\\
&\quad+\sum_{\alpha\in\mathcal{A}_0}\sum_{\tilde{\alpha}
\in\mathcal{A}_0}
\left\langle \sum_{\{(k,\beta): k\in\mathbb{Z}_+,\beta\in
\mathcal{G}_k,Q_{\beta}^{k+1}\subset Q_{\alpha}^0\}}
c_\beta^{k+1}\psi_\beta^{k+1}\mathbf{1}_{Q_{\tilde{\alpha}}^0}, 
\phi_{\alpha_0}^0\mathbf{1}_{Q_{\tilde{\alpha}}^0}\right\rangle\\
&=\sum_{\alpha\in\mathcal{A}_0} c_\alpha^0\left\langle 
\phi_\alpha^0, \phi_{\alpha_0}^0\right\rangle+
\sum_{\alpha\in\mathcal{A}_0}
\left\langle \sum_{\{(k,\beta): k\in\mathbb{Z}_+,
\beta\in\mathcal{G}_k,Q_{\beta}^{k+1}\subset Q_{\alpha}^0\}}
c_\beta^{k+1}\psi_\beta^{k+1}, \phi_{\alpha_0}^0\right\rangle\\
&=c_{\alpha_0}^0=\left\langle f,\phi_{\alpha_0}^0\right\rangle.
\end{align*}
This proves \eqref{e-ff-0}.

Now, we show \eqref{e-ff-p}. 
Using an argument similar to that used in the proof of 
\eqref{e-ff-0}, we infer that, for any $k_0\in\mathbb{Z}_+$ 
and $\beta_0\in\mathcal{G}_{k_0}$, 
\begin{equation*}
\left\langle\sum_{\alpha\in\mathcal{A}_0} 
c_\alpha^0\phi_\alpha^0+\sum_{k=k_0}^\infty\sum_{\beta\in\mathcal{G}_k} 
c_\beta^{k+1} \psi_\beta^{k+1},\psi_{\beta_0}^{k_0+1}
\right\rangle=c_{\beta_0}^{k_0+1}=\left\langle f, 
\psi_{\beta_0}^{k_0+1}\right\rangle.
\end{equation*}
Thus, to show \eqref{e-ff-p}, it suffices to show that 
\begin{equation}\label{e-ff-p-1}
\left\langle\sum_{k=0}^{k_0-1}\sum_{\beta\in\mathcal{G}_k} 
c_\beta^{k+1} \psi_\beta^{k+1},\psi_{\beta_0}^{k_0+1}
\right\rangle=0.
\end{equation}
If $k_0=0$, then \eqref{e-ff-p-1} holds true. If $k_0>0$, 
by Lemma \ref{l-wave2}(i) and \eqref{tri-in}, we obtain, 
for any $k\in\{0,\cdots,k_0-1\}$, $\beta\in\mathcal{G}_k$, 
and $x\in\mathcal{X}$,   
\begin{align*}
&\left|\psi_\beta^{k+1}(x)\psi_{\beta_0}^{k_0+1}(x)\right|\\
&\lesssim \frac{1}{\sqrt{V_{\delta^k}(x_\beta^{k+1})}}
E_k\left(x,x_\beta^{k+1};1\right)
E_{k_0}\left(x,x_{\beta_0}^{k_0+1};1\right)\\
&\lesssim \frac{1}{\sqrt{V_{\delta^k}(x_\beta^{k+1})}}
E_k\left(x,x_\beta^{k+1};2\right)\exp\left(-\frac{\nu}{2^{s+1}}\left[
d(x,x_{\beta_0}^{k_0+1})+d(x,x_\beta^{k+1})\right]^s\right)\\
&\lesssim \frac{1}{\sqrt{V_{\delta^k}(x_\beta^{k+1})}}
E_k\left(x,x_\beta^{k+1};2\right)
E_k\left(x_{\beta_0}^{k_0+1},x_\beta^{k+1};
\frac{2^{s+1}A_0}{\delta^{k_0}}\right),
\end{align*}
where the implicit positive constants may depend on $k_0$ and $\beta_0$.
From this, Proposition \ref{lip-c}, and Lemmas \ref{l-ball2}, 
\ref{l-ball}(ii), and \ref{sum-e}, we deduce that
\begin{align*}
&\sum_{k=0}^{k_0-1}\sum_{\beta\in\mathcal{G}_k} 
\left|c_\beta^{k+1}\right|\int_{\mathcal{X}}\left|\psi_\beta^{k+1}(x)
\psi_{\beta_0}^{k_0+1}(x)\right|\,d\mu(x)\\
&\quad\lesssim \sum_{k=0}^{k_0-1}\sum_{\beta\in\mathcal{G}_k}
\left[\mu(Q_\beta^{k+1})\right]^{\theta+\frac{1}{2}}
E_k\left(x_{\beta_0}^{k_0+1},x_\beta^{k+1};\frac{2^{s+1}A_0}{\delta^{k_0}}\right)\\
&\qquad\times\int_{\mathcal{X}}\frac{1}{V_{\delta^k}(x_\beta^{k+1})}
E_k\left(x,x_\beta^{k+1};2\right)\,d\mu(x)\\
&\quad\lesssim \sum_{k=0}^{k_0-1}
\sum_{\beta\in\mathcal{G}_k}E_k\left(x_{\beta_0}^{k_0+1},x_\beta^{k+1};\frac{2^{s+2}A_0}{\delta^{k_0}}\right)\lesssim 1, 
\end{align*}
where the implicit positive constants depend on $k_0$ and 
$\beta_0$. Applying this, the Fubini theorem, 
and the orthogonality of wavelets, we infer that 
\begin{equation*}
\left\langle\sum_{k=0}^{k_0-1}\sum_{\beta\in\mathcal{G}_k} 
c_\beta^{k+1} \psi_\beta^{k+1},\psi_{\beta_0}^{k_0+1}\right\rangle
=\sum_{k=0}^{k_0-1}\sum_{\beta\in\mathcal{G}_k} 
c_\beta^{k+1}\left\langle\psi_\beta^{k+1},
\psi_{\beta_0}^{k_0+1}\right\rangle=0.
\end{equation*}
This finishes the proof of \eqref{e-ff-p-1} and hence 
that of Proposition \ref{t-lip-wave}. 
\end{proof}

Theorem \ref{thm-lip-cs} follows from Propositions \ref{c-lip}, \ref{lip-c}, and \ref{t-lip-wave} derictly; we omit details here. 
Moreover, using Propositions \ref{c-lip} and \ref{lip-c} and 
the proof of Proposition \ref{t-lip-wave}, 
we also have the following conclusion; we omit details here. 

\begin{corollary}\label{c-f-fn}
Let $\eta\in(0,1]$ as the same in Lemma \ref{l-wave1}, 
$\theta\in(0,\frac{\eta}{\omega})$. For any $n\in\mathbb{N}$
and 
$f\in \mathrm{lip}_\theta(\mathcal{X})$, let 
$$
f_n:=\sum_{\alpha\in\mathcal{A}_0} 
\left\langle f, \phi_\alpha^0\right\rangle\phi_\alpha^0
+\sum_{k=0}^n\sum_{\beta\in\mathcal{G}_k} 
\left\langle f,\psi_\beta^{k+1} \right\rangle \psi_\beta^{k+1}, 
$$ 
Then $f=\lim_{n\to\infty} f_n$ converges in 
$L_{\mathcal{B}}^2(\mathcal{X})$ and there exists a positive 
constant $C$, independent of $f$, such that, 
for any $n\in\mathbb{N}$, 
$$
\|f_n\|_{\mathrm{lip}_\theta(\mathcal{X})}\leq C
\|f\|_{\mathrm{lip}_\theta(\mathcal{X})}.
$$  
\end{corollary}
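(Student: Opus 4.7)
The plan is to derive this corollary by applying Propositions \ref{c-lip}, \ref{lip-c}, and \ref{t-lip-wave} to a suitably truncated coefficient sequence. The convergence statement $f = \lim_{n\to\infty} f_n$ in $L^2_{\mathcal{B}}(\mathcal{X})$ is essentially a restatement of Proposition \ref{t-lip-wave}, which establishes the wavelet reproducing formula for $f \in \mathrm{lip}_\theta(\mathcal{X})$, since the partial sums $f_n$ are precisely the truncations of that series in the scale parameter $k$.

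For the norm estimate, I would introduce, for each $n \in \mathbb{N}$, the truncated coefficient sequence
$$
c^{(n)} := \left\{c_\alpha^{0,(n)}\right\}_{\alpha\in\mathcal{A}_0} \cup \left\{c_\beta^{k+1,(n)}\right\}_{k\in\mathbb{Z}_+, \beta\in\mathcal{G}_k}
$$
defined by $c_\alpha^{0,(n)} := \langle f, \phi_\alpha^0 \rangle$ and $c_\beta^{k+1,(n)} := \langle f, \psi_\beta^{k+1} \rangle$ if $k \leq n$, while $c_\beta^{k+1,(n)} := 0$ if $k > n$. Since zeroing out non-negative contributions can only decrease each dyadic Carleson sum, for any $Q \in \mathcal{D}_0$ the corresponding normalized sum for $c^{(n)}$ is dominated by the one for the full coefficient sequence of $f$. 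Proposition \ref{c-lip} then gives
$$
\|c^{(n)}\|_\ast \leq C \|f\|_{\mathrm{lip}_\theta(\mathcal{X})}.
$$

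Now I would invoke Proposition \ref{lip-c} applied to $c^{(n)}$: the series
$$
\sum_{\alpha\in\mathcal{A}_0} c_\alpha^{0,(n)} \phi_\alpha^0 + \sum_{k=0}^\infty \sum_{\beta\in\mathcal{G}_k} c_\beta^{k+1,(n)} \psi_\beta^{k+1}
$$
converges in $L_{\mathcal{B}}^2(\mathcal{X})$, and its limit lies in $\mathrm{lip}_\theta(\mathcal{X})$ with norm controlled by $C\|c^{(n)}\|_\ast \leq C\|f\|_{\mathrm{lip}_\theta(\mathcal{X})}$. Since $c_\beta^{k+1,(n)} = 0$ for $k > n$, this $L_{\mathcal{B}}^2$-limit is exactly $f_n$, yielding the desired estimate.

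The only minor obstacle is ensuring the identification between the limit produced by Proposition \ref{lip-c} and the element $f_n$ as defined in the corollary, but this is immediate once one observes that the tail contribution in the defining series of $f_n$ vanishes identically due to the truncation; all the analytic work is already packaged in the three cited propositions.
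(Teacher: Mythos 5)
Your proposal is correct and follows essentially the same route the paper indicates (the paper omits the details but explicitly says the corollary follows from Propositions \ref{c-lip} and \ref{lip-c} together with the proof of Proposition \ref{t-lip-wave}): truncate the coefficient sequence in $k$, observe its Carleson norm is dominated by that of the full sequence, and apply Proposition \ref{lip-c} to the truncation while Proposition \ref{t-lip-wave} supplies the $L^2_{\mathcal{B}}$-convergence of $f_n$ to $f$.
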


\section{Applications}\label{app}

In this section, we establish several equivalence characterizations 
of geometric conditions on $\mathcal{X}$. The first one 
is as following related to the lower bound.

\begin{corollary}\label{coro-i}
Let $(\mathcal{X},d,\mu)$ be a space of homogeneous type  
and $\theta\in(0,1)$. 
Then $1\in \mathrm{lip}_\theta(\mathcal{X})$ 
if and only if there exists a positive constant $C$ such that, for 
any $x\in \mathcal{X}$, $\mu(x,1)\geq C$.
\end{corollary}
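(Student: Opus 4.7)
The plan is to reduce the membership $1\in \mathrm{lip}_\theta(\mathcal{X})$ to a simple measure bound on unit balls, then move between radius $1$ and arbitrary radii larger than $1$ using only the doubling condition. First I would observe that for the constant function $f\equiv 1$, the oscillation $\sup_{x,y\in B}|f(x)-f(y)|$ vanishes on every ball, so $\mathcal{M}_\theta^B(1)=0$ for all balls with $r_B\in(0,1]$. Consequently, by Definition \ref{d-lip},
\[
\|1\|_{\mathrm{lip}_\theta(\mathcal{X})}
=\sup_{\text{ball}\ B\subset\mathcal{X},\,r_B\in(1,\infty)}
\frac{1}{[\mu(B)]^{\theta}},
\]
and the statement $1\in\mathrm{lip}_\theta(\mathcal{X})$ becomes the inequality $\inf_{x\in\mathcal{X},\,r\in(1,\infty)}\mu(B(x,r))>0$.

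For the necessity direction, I would fix an arbitrary $x\in\mathcal{X}$, apply the above with the ball $B(x,2)$, and obtain $\mu(B(x,2))\geq \|1\|_{\mathrm{lip}_\theta(\mathcal{X})}^{-1/\theta}$. Since $B(x,2)=2B(x,1)$, the doubling condition \eqref{eq-doub2} yields $\mu(B(x,2))\leq C_{(\mu)}\mu(B(x,1))$, and therefore
\[
V_1(x)=\mu(B(x,1))
\geq \frac{1}{C_{(\mu)}}\|1\|_{\mathrm{lip}_\theta(\mathcal{X})}^{-1/\theta}=:C>0,
\]
uniformly in $x$. For the sufficiency direction, I would assume $V_1(x)\geq C$ for every $x\in\mathcal{X}$, and use the trivial inclusion $B(x,1)\subset B(x,r)$ valid for all $r\in(1,\infty)$ to deduce $\mu(B(x,r))\geq V_1(x)\geq C$. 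It then follows immediately that $\mathcal{M}_\theta^{B(x,r)}(1)\leq C^{-\theta}$ and hence $\|1\|_{\mathrm{lip}_\theta(\mathcal{X})}\leq C^{-\theta}<\infty$.

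This proof is essentially a direct unwinding of the definition, so I do not expect any substantive obstacle. The only subtlety is the asymmetric threshold $r_B=1$ appearing in $\mathcal{M}_\theta^B$ versus the radius $1$ in the hypothesis on $V_1$, which is bridged by the single application of \eqref{eq-doub2} used in the necessity direction.
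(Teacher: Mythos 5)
Your proposal is correct and follows essentially the same route as the paper's proof: the necessity direction tests the $\mathrm{lip}_\theta$ bound on $B(x,2)$ and passes to $B(x,1)$ via the doubling condition \eqref{eq-doub2}, while the sufficiency direction notes the oscillation of the constant function vanishes on balls of radius at most $1$ and uses $B(x_B,1)\subset B$ for larger balls. The only cosmetic difference is that you make the constants explicit where the paper hides them in $\lesssim$.
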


\begin{proof}
We first show the necessity. Assume that $1\in \mathrm{lip}_\theta(\mathcal{X})$.
Then, by the definition of $\mathrm{lip}_\theta(\mathcal{X})$, we have, for any $x\in \mathcal{X}$, 
$$
\frac{\|1\|_{L^\infty(B(x,2))}}{[\mu(B(x,2))]^\theta}\lesssim 1
$$
and hence $1\lesssim [\mu(B(x,2))]^\theta$,
which further implies that $1\lesssim \mu(B(x,1))$. 

Conversely, suppose $1\lesssim \mu(B(x,1))$.  Let $B:=(x_B,r_B)$. If $r_B\in (0,1]$, then 
$$
\sup_{x,y\in B, x\neq y}\frac{|1-1|}{ [\mu(B)]^\theta}=0;
$$
while, if $r_B\in (1,\infty)$, then 
$$
\frac{\|1\|_{L^\infty(B)}}{ [\mu(B)]^\theta}\lesssim \frac{1}{[\mu(B(x_B,1))]^\theta}\lesssim 1.
$$
This implies that $1\in \mathrm{lip}_\theta(\mathcal{X})$ and finishes the proof of 
Corollary \ref{coro-i}.
\end{proof}

Using Theorem \ref{thm-lip-cs}, 
we have one equivalence characterization of the upper bound.  

\begin{corollary}\label{coro-ii}
Let $(\mathcal{X},d,\mu)$ be a space of homogeneous type 
with upper dimension $\omega$, 
$\eta\in(0,1]$ as the same in Lemma \ref{l-wave1}, 
and $\theta\in(0,\frac{\eta}{\omega})$. Then 
the following statements are equivalent.
\begin{enumerate}
\item[{\rm(i)}] $\mathrm{lip}_\theta(\mathcal{X})\subset L^\infty(\mathcal{X})$;
\item[{\rm(ii)}] there exists a positive constant $C$ such that, for 
any $x\in \mathcal{X}$, $\mu(B(x,1))\leq C$.
\end{enumerate}
\end{corollary}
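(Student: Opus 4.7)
For direction (ii) $\Rightarrow$ (i), I would invoke estimate~\eqref{e-p} of Lemma~\ref{l-point} with $r=2$. The doubling inequality \eqref{eq-doub} gives $V_2(x)\leq C_{(\mu)}2^\omega V_1(x)$, which under (ii) is uniformly bounded in $x$. Consequently, every $f\in\mathrm{lip}_\theta(\mathcal{X})$ obeys $|f(x)|\lesssim\|f\|_{\mathrm{lip}_\theta(\mathcal{X})}[V_2(x)]^\theta\lesssim\|f\|_{\mathrm{lip}_\theta(\mathcal{X})}$ pointwise, so $f\in L^\infty(\mathcal{X})$.

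For direction (i) $\Rightarrow$ (ii), the first step is to upgrade the set-theoretic inclusion to a norm estimate. Since $\mathrm{lip}_\theta(\mathcal{X})$ is Banach (completeness is routine because the $r_B>1$ branch of Definition~\ref{d-lip} controls pointwise values on every ball) and norm convergence there forces uniform convergence on each ball, the closed graph theorem yields $C_0\in(0,\infty)$ with $\|f\|_{L^\infty(\mathcal{X})}\leq C_0\|f\|_{\mathrm{lip}_\theta(\mathcal{X})}$ for all $f\in\mathrm{lip}_\theta(\mathcal{X})$. I would then test this inequality against a single wavelet: fix $\alpha_0\in\mathcal{A}_0$ and apply Proposition~\ref{lip-c} to the coefficient sequence with $c_{\alpha_0}^0=1$ and all other entries zero. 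Because level-$0$ cubes are disjoint and $\mathcal{D}_0$ contains no cubes of negative level, the Carleson supremum is realized at $Q=Q_{\alpha_0}^0$ and equals $[\mu(Q_{\alpha_0}^0)]^{-(\theta+1/2)}$; Proposition~\ref{lip-c} then gives $\phi_{\alpha_0}^0\in\mathrm{lip}_\theta(\mathcal{X})$ with
\[
\|\phi_{\alpha_0}^0\|_{\mathrm{lip}_\theta(\mathcal{X})}\lesssim [\mu(Q_{\alpha_0}^0)]^{-(\theta+1/2)},
\]
whence by the closed graph estimate the same bound holds for $\|\phi_{\alpha_0}^0\|_{L^\infty(\mathcal{X})}$.

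The crux is to turn this into a uniform bound on $V_1(x_{\alpha_0}^0)$. Using the $L^2$-orthonormality in Lemma~\ref{l-wave1} together with the trivial inequality $1=\|\phi_{\alpha_0}^0\|_{L^2(\mathcal{X})}^2\leq\|\phi_{\alpha_0}^0\|_{L^\infty(\mathcal{X})}\|\phi_{\alpha_0}^0\|_{L^1(\mathcal{X})}$ and the $L^1$-bound $\|\phi_{\alpha_0}^0\|_{L^1(\mathcal{X})}\lesssim\sqrt{V_1(x_{\alpha_0}^0)}$ (which comes from Lemma~\ref{l-wave1}(i), Lemma~\ref{l-ball}, and a dyadic-shell summation of the kind already performed for $\mathrm{J_2}$ in the proof of Proposition~\ref{c-lip}), I obtain the matching lower bound $\|\phi_{\alpha_0}^0\|_{L^\infty(\mathcal{X})}\gtrsim [V_1(x_{\alpha_0}^0)]^{-1/2}$. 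Combining the two estimates and using $\mu(Q_{\alpha_0}^0)\sim V_1(x_{\alpha_0}^0)$ (Lemma~\ref{2-cube}(iii) plus \eqref{eq-doub}) yields $V_1(x_{\alpha_0}^0)\lesssim 1$ uniformly in $\alpha_0\in\mathcal{A}_0$. To extend this to an arbitrary $x\in\mathcal{X}$, \eqref{dis-re} furnishes $\alpha_0\in\mathcal{A}_0$ with $d(x,x_{\alpha_0}^0)<C_0$, so $B(x,1)\subset B(x_{\alpha_0}^0,A_0(1+C_0))$ and a final use of \eqref{eq-doub} transfers the bound to $V_1(x)$.

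The main obstacle is precisely the reverse estimate $\|\phi_{\alpha_0}^0\|_{L^\infty(\mathcal{X})}\gtrsim [V_1(x_{\alpha_0}^0)]^{-1/2}$: Lemma~\ref{l-wave1}(i) only supplies the matching \emph{upper} bound, so one has to exploit both the $L^2$-normalization and the sharp exponential decay to recover saturated pointwise behavior for the wavelet. The Banach-space setup behind the closed graph theorem is a minor but worthwhile point requiring explicit verification.
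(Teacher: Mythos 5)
Your proof is correct and follows the same overall strategy as the paper: the implication (ii)~$\Rightarrow$~(i) via \eqref{e-p}, and (i)~$\Rightarrow$~(ii) by testing the embedding against the father wavelets $\phi_{\alpha_0}^0$, comparing their $L^\infty$ and $\mathrm{lip}_\theta$ norms, and transferring the resulting bound $\mu(Q_{\alpha_0}^0)\lesssim 1$ to arbitrary unit balls via \eqref{dis-re}. The one genuinely different ingredient is how you obtain the crucial lower bound $\|\phi_{\alpha_0}^0\|_{L^\infty(\mathcal{X})}\gtrsim[V_1(x_{\alpha_0}^0)]^{-1/2}$: the paper cites the explicit spline construction in the proof of Theorem~6.1 of \cite{ah13}, which gives the pointwise saturation $|\phi_{\alpha_0}^0(x_{\alpha_0}^0)|\sim[\mu(Q_{\alpha_0}^0)]^{-1/2}$, whereas you derive it from the $L^2$-normalization together with the $L^1$ upper bound $\|\phi_{\alpha_0}^0\|_{L^1(\mathcal{X})}\lesssim\sqrt{V_1(x_{\alpha_0}^0)}$ via $1=\|\phi_{\alpha_0}^0\|_{L^2}^2\leq\|\phi_{\alpha_0}^0\|_{L^\infty}\|\phi_{\alpha_0}^0\|_{L^1}$. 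Your route is self-contained (the needed $L^1$ estimate already appears in the treatment of $\mathrm{J_2}$ in the proof of Proposition~\ref{c-lip}) and works for any $L^2$-normalized function with that decay, at the cost of not identifying where the sup is attained; the paper's route is shorter but leans on internal details of the Auscher--Hyt\"onen construction. Two further remarks: your explicit closed-graph argument upgrading the set inclusion (i) to a bounded embedding fills in a step the paper leaves implicit, and is sound since \eqref{e-p} shows $\mathrm{lip}_\theta$-convergence controls pointwise values; and your identification of the Carleson supremum for the single-coefficient sequence is correct because every $Q\in\mathcal{D}_0$ containing $Q_{\alpha_0}^0$ has level $k\geq 0$ and hence coincides with $Q_{\alpha_0}^0$ by the nesting in Lemma~\ref{2-cube}(ii), which is exactly the observation made in \eqref{e-q-q}.
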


\begin{proof}
(ii) $\Rightarrow$ (i) follows from \eqref{e-p} directly. 
Thus, to show Corollary \ref{coro-ii}, it suffices to prove 
(i) $\Rightarrow$ (ii). 
By Theorem~\ref{thm-lip-cs}, we find that, 
for any  
$\alpha\in\mathcal{A}_0$, 
\begin{equation}\label{e-w-lip}
\left\|\phi_\alpha^{0}\right\|_{\mathrm{lip}_{\theta}(\mathcal{X})}
\sim \left[\mu(Q_\alpha^{0})\right]^{-\frac12-\theta}.
\end{equation}
On the other hand, by the proof of \cite[Theorem 6.1]{ah13}, 
we obtain, for any $\alpha\in\mathcal{A}_0$, 
\begin{equation*}
\left\|\phi_\alpha^{0}\right\|_{L^\infty(\mathcal{X})}
\sim\left|\phi_\alpha^{0}(x_\alpha^{0})\right|
\sim \left[\mu(Q_\alpha^{0})\right]^{-\frac12},
\end{equation*}
which, together with (i) and \eqref{e-w-lip}, further implies that 
$$
\mu(Q_\alpha^{0})\lesssim 1.
$$
For any $x\in \mathcal{X}$, by \eqref{dis-re}, we find that there
exists $\alpha_0\in \mathcal{A}_0$ such that 
$d(x,x_{\alpha_0}^0)<C_0$.
This, combined Lemma \ref{l-ball}(i), further implies that
$$
\mu(B(x,1))\sim \mu(B(x_{\alpha_0}^0,1))\lesssim 1
$$
and hence finishes the proof of Corollary \ref{coro-ii}.
\end{proof}

At the end of this section, we establish a 
equivalence characterization of the so-called 
\emph{Ahlfors regular space} via 
Theorem \ref{thm-lip-cs}.

\begin{corollary}\label{thm-ags}
Let $(\mathcal{X},d,\mu)$ be a space of homogeneous type 
with upper dimension $\omega$, 
$\eta\in(0,1]$ as the same in Lemma \ref{l-wave1}, 
and $\theta\in(0,\frac{\eta}{\omega})$. 
Then the following statements are equivalent.
\begin{enumerate}
\item[{\rm(i)}] $\mathcal{X}$ is an Ahlfors regular space:  
there exists a constant $C\in[1,\infty)$ such that, 
for any $x\in\mathcal{X}$ and $r\in(0,\infty)$, 
\begin{equation}\label{e-agc}
C^{-1}r^\omega\leq\mu(B(x,r))\leq Cr^\omega.
\end{equation}
\item[{\rm(ii)}]  $\mathrm{lip}_\theta(\mathcal{X})
=C^{\theta\omega}(\mathcal{X})$ with equivalence quasi-norms, 
where 
$$C^{\theta\omega}(\mathcal{X}):=\left\{f\in C(\mathcal{X}):
\|f\|_{C^{\theta\omega}(\mathcal{X})}<\infty\right\}$$
with, for any $f\in C(\mathcal{X})$, 
$$\|f\|_{C^{\theta\omega}(\mathcal{X})}:=\sup_{x,\ y\in\mathcal{X},\ x\neq y}
\frac{|f(x)-f(y)|}{[d(x,y)]^{\theta\omega}}
+\|f\|_{L^\infty(\mathcal{X})}.
$$
\end{enumerate}
\end{corollary}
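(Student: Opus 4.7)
The plan is to prove each direction separately, with (ii) $\Rightarrow$ (i) being the main task.

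For (i) $\Rightarrow$ (ii), the strategy is to translate every $\mu$-quantity into a $d$-quantity. Under \eqref{e-agc}, Lemma \ref{l-ball}(i) gives $V(x,y)\sim [d(x,y)]^\omega$ and $\mu(B)\sim r_B^\omega$. The embedding $\mathrm{lip}_\theta(\mathcal{X})\hookrightarrow C^{\theta\omega}(\mathcal{X})$ then follows by converting \eqref{e-p2} into a $[d(x,y)]^{\theta\omega}$-Hölder bound and using \eqref{e-p} at $r=1$ (together with $V_1\sim 1$) to control $\|f\|_{L^\infty(\mathcal{X})}$. The reverse embedding is elementary: for $r_B\leq 1$ we have $|f(x)-f(y)|/[\mu(B)]^\theta\lesssim \|f\|_{C^{\theta\omega}(\mathcal{X})}(d(x,y)/r_B)^{\theta\omega}\lesssim \|f\|_{C^{\theta\omega}(\mathcal{X})}$, while for $r_B>1$ the inequality $[\mu(B)]^\theta\gtrsim 1$ gives $\|f\|_{L^\infty(B)}/[\mu(B)]^\theta\lesssim \|f\|_{L^\infty(\mathcal{X})}$.

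For (ii) $\Rightarrow$ (i), the core idea is to test the assumed norm equivalence on the wavelets themselves. By the orthogonality relations recorded in the remark after Lemma \ref{l-wave2}, the Carleson expansions of $\phi_\alpha^0$ (with $\alpha\in\mathcal{A}_0$) and of $\psi_\beta^{k+1}$ (with $k\geq 0$ and $\beta\in\mathcal{G}_k$) each have exactly one nonzero coefficient, equal to $1$ at themselves. Since both $Q_\alpha^0$ and $Q_\beta^{k+1}$ already lie in $\mathcal{D}_0$, the suprema in the $\|\cdot\|_\ast$ norm are attained at these cubes themselves, and Theorem \ref{thm-lip-cs} gives
\begin{equation*}
\left\|\phi_\alpha^0\right\|_{\mathrm{lip}_\theta(\mathcal{X})}\sim [\mu(Q_\alpha^0)]^{-1/2-\theta},\qquad \left\|\psi_\beta^{k+1}\right\|_{\mathrm{lip}_\theta(\mathcal{X})}\sim [\mu(Q_\beta^{k+1})]^{-1/2-\theta}.
\end{equation*}
On the other hand, a direct pointwise computation from Lemmas \ref{l-wave1} and \ref{l-wave2}, interpolating between the size and smoothness estimates (which uses $\eta>\theta\omega$ in a crucial way) and incorporating the exponential decay, yields
\begin{equation*}
\left\|\phi_\alpha^0\right\|_{C^{\theta\omega}(\mathcal{X})}\sim [\mu(Q_\alpha^0)]^{-1/2},\qquad \left\|\psi_\beta^{k+1}\right\|_{C^{\theta\omega}(\mathcal{X})}\sim [\mu(Q_\beta^{k+1})]^{-1/2}\delta^{-k\theta\omega},
\end{equation*}
where the extra factor $\delta^{-k\theta\omega}\geq 1$ reflects the fact that the Hölder seminorm dominates the $L^\infty$ piece precisely when $k\geq 1$. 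Equating both pairs under the norm equivalence hypothesis forces $\mu(Q_\alpha^0)\sim 1$ and $\mu(Q_\beta^{k+1})\sim \delta^{k\omega}\sim(\delta^{k+1})^\omega$.

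Putting these together, every dyadic cube $Q_\alpha^m$ at any scale $m\geq 0$ satisfies $\mu(Q_\alpha^m)\sim(\delta^m)^\omega$. Combining this with the sandwich bound in Lemma \ref{2-cube}(iii), the $C_0\delta^m$-density of the dyadic points from \eqref{dis-re}, and the invariance in Lemma \ref{l-ball}(i), one upgrades to $V_{\delta^m}(x)\sim(\delta^m)^\omega$ uniformly in $x\in\mathcal{X}$; monotonicity of $V_r(x)$ in $r$ then delivers \eqref{e-agc} for all $r\in(0,1]$. The extension to $r>1$ combines iterated doubling from $V_1\sim 1$ via \eqref{eq-doub} (upper bound) with a packing argument inside $B(x,r)$ using the unit-scale Ahlfors regularity just established together with the separation property \eqref{dis-re} at scale $0$ (lower bound). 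The main obstacle I foresee is the sharp identification of $\|\psi_\beta^{k+1}\|_{C^{\theta\omega}(\mathcal{X})}$: the exponential-decay and smoothness estimates of Lemma \ref{l-wave2} must be combined, via case analysis on whether $d(x,y)$ lies above or below $\delta^k$, so as to produce exactly the factor $\delta^{-k\theta\omega}$, since any slippage in this exponent would corrupt the Ahlfors exponent recovered upon equating.
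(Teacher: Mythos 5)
Your overall route coincides with the paper's: direction (i)~$\Rightarrow$~(ii) is the same elementary translation via Lemma \ref{l-point} and \eqref{e-agc}, and for (ii)~$\Rightarrow$~(i) the paper likewise tests the norm equivalence on the wavelets, obtains $\|\psi_\beta^{k+1}\|_{\mathrm{lip}_\theta(\mathcal{X})}\sim[\mu(Q_\beta^{k+1})]^{-1/2-\theta}$ from Theorem \ref{thm-lip-cs}, matches it against $\|\psi_\beta^{k+1}\|_{\dot C^{\theta\omega}(\mathcal{X})}\sim\delta^{-k\theta\omega}[\mu(Q_\beta^{k+1})]^{-1/2}$ to get $\mu(Q_\beta^{k+1})\sim\delta^{k\omega}$, and then upgrades to \eqref{e-agc}. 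The difference is that the paper outsources the two technical ingredients to the literature (the H\"older norms of the wavelets to \cite{whhy} and \cite{hwyy20}, and the passage from cube measures to ball measures to \cite{hhhlp20} and \cite{wyy}), whereas you propose to prove both from scratch; that is a legitimate choice, but as written your sketch has two gaps there.

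First, ``interpolating between the size and smoothness estimates'' of Lemma \ref{l-wave2} only yields the \emph{upper} bound $\|\psi_\beta^{k+1}\|_{C^{\theta\omega}(\mathcal{X})}\lesssim[\mu(Q_\beta^{k+1})]^{-1/2}\delta^{-k\theta\omega}$. With only the matching lower bound $\|\psi_\beta^{k+1}\|_{C^{\theta\omega}}\geq\|\psi_\beta^{k+1}\|_{L^\infty}\gtrsim[\mu(Q_\beta^{k+1})]^{-1/2}$ (from $\|\psi_\beta^{k+1}\|_{L^2}=1$), equating the two norm formulas gives only $\delta^{k\omega}\lesssim\mu(Q_\beta^{k+1})\lesssim1$, not $\mu(Q_\beta^{k+1})\sim\delta^{k\omega}$. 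To close this you must show the H\"older seminorm is genuinely of size $[\mu(Q_\beta^{k+1})]^{-1/2}\delta^{-k\theta\omega}$, which requires producing two points at distance $\lesssim\delta^k$ where $\psi_\beta^{k+1}$ differs by $\gtrsim[\mu(Q_\beta^{k+1})]^{-1/2}$; this uses the cancellation $\int\psi_\beta^{k+1}\,d\mu=0$ together with $\|\psi_\beta^{k+1}\|_{L^1}\gtrsim[V_{\delta^k}(x_\beta^{k+1})]^{1/2}$ and the exponential localization (so that both the positive and the negative mass concentrate in $B(x_\beta^{k+1},M\delta^k)$), not merely the size and regularity bounds. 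This is precisely what the cited results in \cite{whhy,hwyy20} supply.

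Second, the upgrade step: the identity $\mu(Q_\beta^{k+1})\sim\delta^{k\omega}$ is obtained only for $\beta\in\mathcal{G}_k$, i.e.\ for cubes centred at \emph{new} dyadic points $x_\beta^{k+1}\in\mathcal{Y}^k=\mathcal{X}^{k+1}\setminus\mathcal{X}^k$. The set $\mathcal{Y}^k$ need not be $C_0\delta^k$-dense (it can even be empty at some levels), so \eqref{dis-re} does not directly give you a controlled cube near an arbitrary $x\in\mathcal{X}$ at every scale, and your assertion that ``every dyadic cube $Q_\alpha^m$ at any scale $m\ge0$'' inherits the estimate does not follow from density alone. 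One must argue separately for cubes centred at old points (e.g.\ by comparing with descendants/ancestors whose centres are new, which is the content of \cite[Corollary 3.4]{hhhlp20} and \cite[Lemma 2.9]{wyy} that the paper invokes). Your packing argument for $r>1$ and the doubling iteration are fine once the unit-scale regularity is in hand.
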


\begin{proof}
We first show (i)\ $\Rightarrow$\ (ii). 
To this end, suppose that
$f\in \mathrm{lip}_\theta(\mathcal{X})$. 
By Lemma \ref{l-point} and \eqref{e-agc}, we find that, 
for any $x\in\mathcal{X}$,
$$
|f(x)|\lesssim\|f\|_{\mathrm{lip}_\theta(\mathcal{X})},
$$
and, for any $x,y\in\mathcal{X}$ with $x\neq y$, 
$$
|f(x)-f(y)|\lesssim 
\|f\|_{\mathrm{lip}_\theta(\mathcal{X})}[V(x,y)]^\theta
\lesssim \|f\|_{\mathrm{lip}_\theta(\mathcal{X})}[d(x,y)]^{\theta\omega}.
$$
This further implies that $f\in C^{\theta\omega}(\mathcal{X})$ and 
$
\|f\|_{C^{\theta\omega}(\mathcal{X})}
\lesssim \|f\|_{\mathrm{lip}_\theta(\mathcal{X})}.
$
On the contrary, assume that $f\in C^{\theta\omega}(\mathcal{X})$ 
and $B:=(x_B,r_B)$ for some $x_B\in \mathcal{X}$ and $r_B\in(0,\infty)$. 
If $r_B\in(1,\infty)$, then, by Definition \ref{d-lip} 
and \eqref{e-agc}, we have
\begin{equation}\label{e-mbf-i}
\mathcal{M}_\theta^B(f)
=\frac{\|f\|_{L^\infty(B)}}{[\mu(B)]^\theta}
\lesssim \|f\|_{L^\infty(\mathcal{X})}
\leq \|f\|_{C^{\theta\omega}(\mathcal{X})}.
\end{equation}
If $r_B\in(0,1]$, then, for any $x,y\in B$ with $x\neq y$, 
$d(x,y)\leq 2A_0r_B$. From this and \eqref{e-agc}, 
we deduce that, for any $x,y\in B$ with $x\neq y$,
$$
|f(x)-f(y)|\leq \|f\|_{C^{\theta\omega}(\mathcal{X})} 
[d(x,y)]^{\theta\omega}
\lesssim \|f\|_{C^{\theta\omega}(\mathcal{X})} 
r^{\theta\omega}\sim  \|f\|_{C^{\theta\omega}(\mathcal{X})} 
[\mu(B)]^{\theta\omega}
$$
and hence 
$$
\mathcal{M}_\theta^B(f)\lesssim 
\|f\|_{C^{\theta\omega}(\mathcal{X})},
$$
which, combined with \eqref{e-mbf-i}, 
further implies that $f\in \mathrm{lip}_\theta(\mathcal{X})$ and 
$
\|f\|_{\mathrm{lip}_\theta(\mathcal{X})}
\lesssim \|f\|_{C^{\theta\omega}(\mathcal{X})}.
$
This finishes the proof of (i)\ $\Rightarrow$\ (ii). 

Next, we show (ii)\ $\Rightarrow$\ (i). 
By \cite[Theorem 6.15]{whhy} and \cite[Theorem 7.4(i)]{hwyy20}, 
we conclude that, for any $k\in\mathbb{Z}_+$ and 
$\beta\in\mathcal{G}_k$, 
\begin{equation}\label{w-n}
\left\|\psi_\beta^{k+1}\right\|_{\mathrm{lip}_{\theta}(\mathcal{X})}
\sim \left\|\psi_\beta^{k+1}\right\|_{\dot{C}^{\theta\omega}(\mathcal{X})}
\sim\delta^{-k\theta\omega}
\left[\mu(Q_\beta^{k+1})\right]^{-\frac12}.
\end{equation} 
On the other hand, from Theorem \ref{thm-lip-cs}, we infer that, 
for any $k\in\mathbb{Z}_+$ and 
$\beta\in\mathcal{G}_k$, 
\begin{equation*}
\left\|\psi_\beta^{k+1}\right\|_{\mathrm{lip}_{\theta}(\mathcal{X})}
\sim \left[\mu(Q_\beta^{k+1})\right]^{-\frac12-\theta}.
\end{equation*}
This, together with \eqref{w-n}, further implies that, 
for any $k\in\mathbb{Z}_+$ and 
$\beta\in\mathcal{G}_k$,
\begin{equation}\label{e-q-k}
\mu(Q_\beta^{k+1})\sim \delta^{k\omega}.
\end{equation}
Using this and \cite[Corollary 3.4]{hhhlp20}, we deduce that, 
for any $x\in\mathcal{X}$ and $r\in(0,\infty)$, 
\begin{equation}\label{e-m-k}
\mu(B(x,r))\gtrsim r^\omega.
\end{equation} 
Moreover, applying an argument similar to that used in the 
proof of \cite[Lemma 2.9]{wyy} and \eqref{e-q-k}, we have, 
for any $k\in\mathbb{Z}_+$ and $\alpha\in\mathcal{A}_k$, 
$$
\mu(Q_\alpha^k)\lesssim \delta^{k\omega}
$$ 
and hence, for any $\alpha\in \mathcal{A}_0$, 
$$
\mu(B(x_\alpha^0,1))\sim \mu(Q_\alpha^k) \lesssim 1.
$$
By this, Lemma \ref{2-cube}(iii), and 
$\mathcal{X}^k\subset\mathcal{X}^0$ for any 
$k\in \mathbb{Z}\setminus \mathbb{Z}_+$, we obtain, for any 
$k\in \mathbb{Z}\setminus \mathbb{Z}_+$ and 
$\alpha\in\mathcal{A}_k$,
$$
\mu(Q_\alpha^k)\leq \mu(B(x_\alpha^k,C_\#\delta^k))
\lesssim \delta^k \mu(B(x_\alpha^k,1))\lesssim \delta^k,
$$
which, combined with \cite[Lemma 2.9]{wyy}, further implies that 
for any $x\in\mathcal{X}$ and $r\in(0,\infty)$, 
\begin{equation*}
\mu(B(x,r))\lesssim r^\omega.
\end{equation*} 
This, together with \eqref{e-m-k}, finishes the proof 
of (ii)\ $\Rightarrow$\ (i) and hence that of 
Corollary \ref{thm-ags}. 
\end{proof}

\paragraph{Acknowledgments.} The author would like to thank Prof.s D. Yang and W. Yuan for their valuable suggestions
on this article.

\bigskip

\noindent Fan Wang (Corresponding author)

\smallskip

\noindent College of Mathematics and Information
Science, Hebei University, Baoding 071002,
The People's Republic of China

\smallskip

\noindent{\it E-mail:} \texttt{fanwang@hbu.edu.cn}

\end{document}